\newtheorem{Thm}{Theorem}[section]
\newtheorem{Cor}[Thm]{Corollary}
\newtheorem{Defn}[Thm]{Definition}
\newtheorem{Lem}[Thm]{Lemma}
\newtheorem{Prop}[Thm]{Proposition}
\newtheorem{Eg}[Thm]{Example}
\newtheorem{Rmk}[Thm]{Remark}
\newenvironment{narrow}[2]{%
 \begin{list}{}{%
  \setlength{\topsep}{0pt}%
  \setlength{\leftmargin}{#1}%
  \setlength{\rightmargin}{#2}%
  \setlength{\listparindent}{\parindent}%
  \setlength{\itemindent}{\parindent}%
  \setlength{\parsep}{\parskip}%
 }%
\item[]}{\end{list}}
\newif\ifpic
\title{Embedding Seifert manifolds in $S^4$}
\author{Andrew Donald}
\email{a.donald.1@research.gla.ac.uk}
\begin{document}
\begin{abstract}
Using an obstruction based on Donaldson's theorem on the intersection forms of definite 4-manifolds, we determine which connected sums of lens spaces smoothly embed in $S^4$. We also find constraints on the Seifert invariants of Seifert 3-manifolds which embed in $S^4$ when either the base orbifold is non-orientable or the first Betti number is odd. In addition we construct some new embeddings and use these, along with the $d$ and $\overline{\mu}$ invariants, to examine the question of when the double branched cover of a 3 or 4 strand pretzel link embeds.
\end{abstract}
\maketitle
\section{Introduction}

We consider the question of which closed 3-manifolds can be embedded smoothly in $S^4$. Such manifolds are necessarily orientable. Results are known for special classes of manifolds including some Seifert fibred cases \cite{GL}, \cite{CH}, some of which also hold for topological locally flat embeddings. In the case of smooth embeddings, the question was examined systematically in \cite{Budney}.

The approach of this paper is partly motivated by work on knot theory. Recent work on slice knots, most notably by Lisca \cite{lisca} \cite{lisca2}, has focussed on obstructions to a rational homology sphere bounding a rational ball. If a knot is slice -- the boundary of a properly embedded 2-disk in $D^4$ -- then it is a classical fact that its double branched cover bounds a rational ball. We adopt a similar approach to 3-manifolds embedding in $S^4$ using the following observation. If a knot (or indeed a link) is doubly slice -- that is, a cross-section of an unknotted 2-sphere in $S^4$ -- its double branched cover embeds in $S^4$. Since a doubly slice knot is automatically slice it is natural to use obstructions of a similar flavour.

Lisca's work on 3-dimensional lens spaces and two-bridge links determined precisely which connected sums of lens spaces were the boundaries of smooth rational balls. The same methods can be adapted to determine which embed smoothly in $S^4$. Recall that each lens space can be written as $L(p,q)$ with $p>q>0$ and is given by $-p/{q}$-surgery on the unknot in $S^3$.

\begin{Thm} \label{sumslens}
Let $L= \#_{i=1}^h{L(p_i , q_i)}$. Then $L$ embeds smoothly in $S^4$ if and only if each $p_i$ is odd and there exists $Y$ such that $L \cong Y \# -Y$.
\end{Thm}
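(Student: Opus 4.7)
My plan is to prove the two directions of the equivalence separately.

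For the \emph{sufficiency} direction, suppose each $p_i$ is odd and $L \cong Y \# -Y$ with $Y = \#_j L(p_j, q_j)$. I would first show that each summand $L(p,q) \# -L(p,q) = L(p,q) \# L(p, p-q)$ with $p$ odd embeds in $S^4$. Because $p$ is odd, $L(p,q)$ is the double branched cover of the $2$-bridge knot $K(p,q)$, and the connect sum of $K(p,q)$ with its mirror is doubly slice---a classical fact, since a knot connect-summed with its mirror is always a cross-section of an unknotted $2$-sphere in $S^4$. Taking double branched covers produces the claimed embedding. Second, if closed $3$-manifolds $M_1$ and $M_2$ each embed in $S^4$, then so does $M_1 \# M_2$: decompose $S^4 \cong S^4 \# S^4$ with $M_i$ in the $i$-th summand and disjoint from the separating $S^3$, then tube $M_1$ to $M_2$ along an arc in $S^4$. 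Iterating assembles an embedding of $L$.

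For the \emph{necessity} direction, suppose $L = \#_i L(p_i, q_i)$ embeds smoothly in $S^4$. Since $L$ is a rational homology sphere, a Mayer--Vietoris calculation forces $L$ to separate $S^4$ into two pieces $W_+$ and $W_-$, each itself a rational homology $4$-ball. Let $P$ denote the canonical negative-definite plumbing bounded by $L$---a boundary connected sum of the linear plumbings coming from the continued-fraction expansions of $-p_i/q_i$. Gluing $P$ to each of $-W_\pm$ yields a closed smooth $4$-manifold that is negative definite with $b_1 = 0$, so Donaldson's theorem identifies its intersection form with the standard diagonal form $-n\langle 1 \rangle$, where $n = \operatorname{rank} P$. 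This produces two embeddings of the plumbing lattice of $L$ into the standard negative-definite lattice of the same rank.

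I would then feed these two lattice embeddings into an adaptation of Lisca's combinatorial classification. A convenient reformulation passes through the linking form on $H_1(L;\mathbb{Z})$: the kernels of $H_1(L;\mathbb{Z}) \to H_1(W_\pm;\mathbb{Z})$ are two \emph{complementary} Lagrangian (metabolizer) subgroups of the linking form of $L$. A linking-form computation at the prime $2$ shows that the presence of any $L(p_i, q_i)$ with $p_i$ even obstructs two complementary metabolizers, forcing all $p_i$ odd. With the parity constraint in hand, the explicit description of the two lattice embeddings from Lisca's work should pair the summands into orientation-reversing pairs $L(p_i, q_i) \# L(p_i, p_i - q_i)$, yielding $L \cong Y \# -Y$.

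The principal obstacle will be this final combinatorial step. Lisca's original theorem characterizes those connected sums of lens spaces bounding a single rational $4$-ball in terms of certain pairings of the summands, but not every such pairing has the form $L(p,q) \# -L(p,q)$. The stronger input from embedding in $S^4$---two complementary metabolizers (equivalently, two lattice embeddings) rather than one---must eliminate the exotic pairings and force the orientation-reversing one. Making this enhancement of Lisca's argument rigorous is the technical heart of the proof.
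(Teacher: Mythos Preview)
Your outline is the paper's approach. Sufficiency via Zeeman's theorem applied to 2-bridge knots is identical; necessity via two rational balls, Donaldson's theorem applied to both closures of the standard negative-definite plumbing, and an enhanced Lisca analysis is exactly the paper's architecture, with your metabolizer language equivalent to the paper's formulation in terms of subgroups $H_i = \operatorname{im} A_i / \operatorname{im} Q_X$ of $\operatorname{coker} Q_X$ satisfying $\operatorname{coker} Q_X = H_1 \oplus H_2$.

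You have correctly located the difficulty. Concretely, Lisca's classification of square-root-order linear subsets allows, besides the complementary pairs $L(p,q)\#L(p,p-q)$, certain ``bad components'' representing lens spaces of the form $L(m^2 n,\, mnk+1)$; these are the ``exotic pairings'' you anticipate. The paper's specific new content (Propositions~\ref{twoBC} and~\ref{allBC}) is to show that the double-subset condition $G(S)\cong H(S)\oplus H(S)$ is incompatible with any bad component: one argument handles an irreducible piece with two graph-components of which one is bad, by bounding the orders of elements of $H(S)$; a separate argument disposes of the residual situation in which \emph{every} component is bad. Once bad components are excluded, Lisca's structure result (packaged as Proposition~\ref{complementarystuff}) yields the complementary pairing. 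The parity constraint on the $p_i$ then follows either from comparing the two subsets (as the paper does at the end of the proof) or, as you propose and as the paper acknowledges in a remark, directly from the linking form at the prime $2$. Your plan is correct; what remains is precisely to supply the bad-component exclusion arguments.
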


This generalises a result of Gilmer-Livingston \cite{GL} and Fintushel-Stern \cite{fintushel-sternssf} in the case $h=2$.

The primary obstruction here utilises Donaldson's diagonalisation theorem and we briefly summarise it. Since a connected sum of lens spaces is a rational homology sphere, an embedding into $S^4$ produces a splitting $S^4=U \cup_L -V$ where $U$ and $V$ are rational balls with common boundary $L$. For either orientation of $L$ there is a standard negative definite 2-handlebody\footnote{We use the term `2-handlebody' to refer to a 4-manifold produced by attaching 2-handles to $D^4$.} with boundary $L$, given by a plumbing construction. (See \cite[Example 4.6.2]{GS} for details on plumbings.)

\begin{figure}[htbp] 
\begin{center}
\ifpic
\def\svgwidth{5cm}
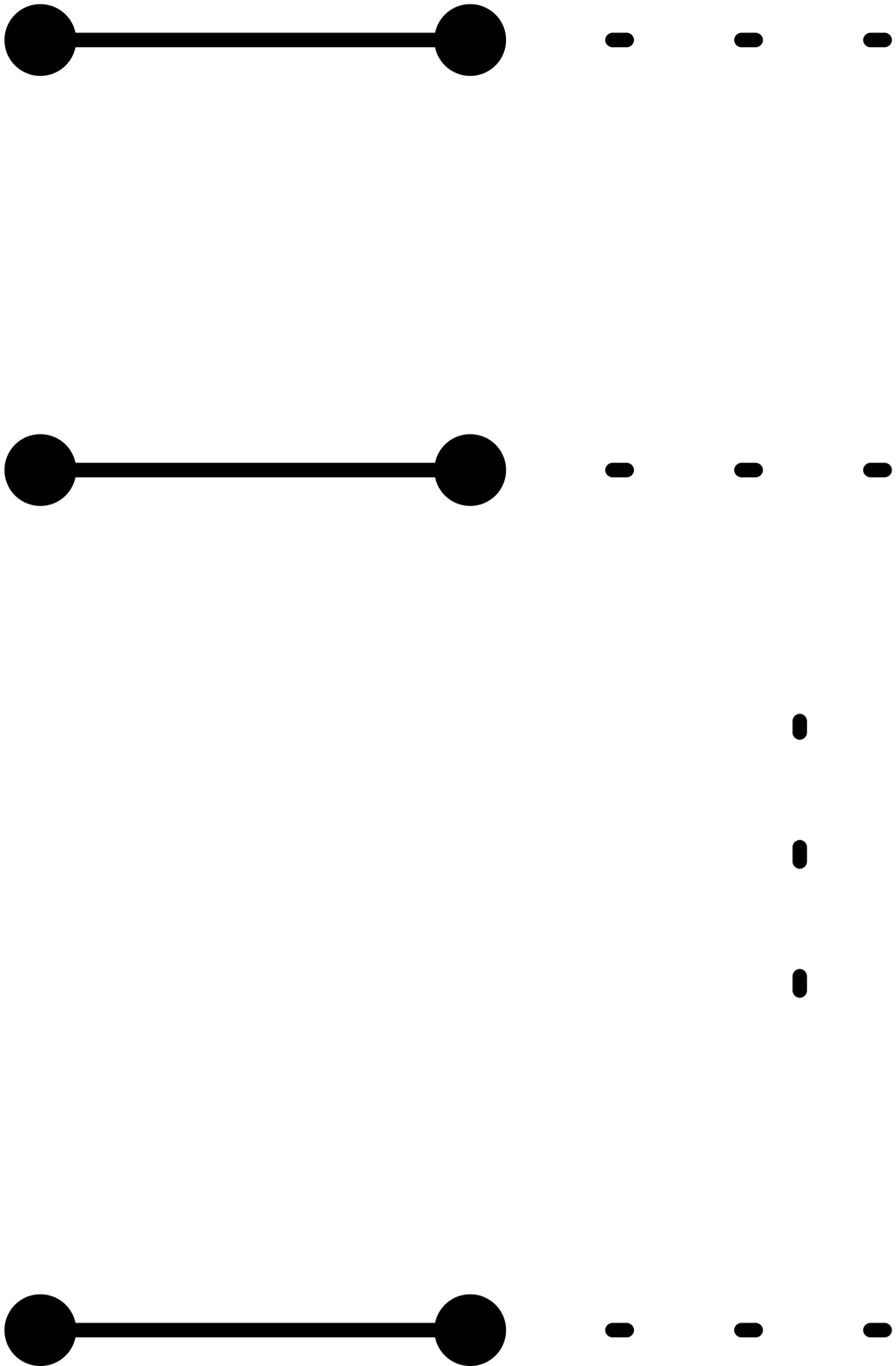
\else \vskip 5cm \fi
\begin{narrow}{0.3in}{0.3in}
\caption{
\bf{Plumbing graph for a negative definite 4-manifold with boundary a connected sum of lens spaces.}}
\label{h linear plumbings}
\end{narrow}
\end{center}
\end{figure}

In brief, we find $a_1^i, \ldots , a_{n_i}^i$ such that $$\frac{p_i}{q_i}  = [a_1^i, \ldots , a_{n_i}^i]^- = a_1^i -\dfrac{1}{a_2^i-\dfrac{1}{\ddots-\dfrac{1}{a_{n_i}^i}}} $$ with each $a_j^i \geq 2$. This is the negative continued fraction of ${p_i}/{q_i}$. Let $X$ be obtained by plumbing according to the graph in Figure \ref{h linear plumbings}. This has boundary $L$ and is negative definite so gluing $-U$ or $-V$ to $X$ gives a smooth, closed, negative definite 4-manifold. This must have a standard intersection form. Lisca studied the induced map $H_2(X) \to H_2(X \cup -U)$ to obtain conditions on the intersection form of $X$ and hence $L$. We get an obstruction to $L$ embedding in $S^4$ by using both $U$ and $V$.

These ideas can be extended to Seifert manifolds \cite{Lec}, \cite{GJ}.
A Seifert manifold $Y$ can be described by a base surface $F$ together with a collection of singular fibres described by Seifert invariants of type $(a_i,b_i)$ for coprime integers $a_i,b_i$. Such a description is not unique but it is not hard to determine when two sets of data give the same Seifert manifold \cite{NR}.

\begin{figure}[htbp] 
\begin{center}
\ifpic
\def\svgwidth{8cm}
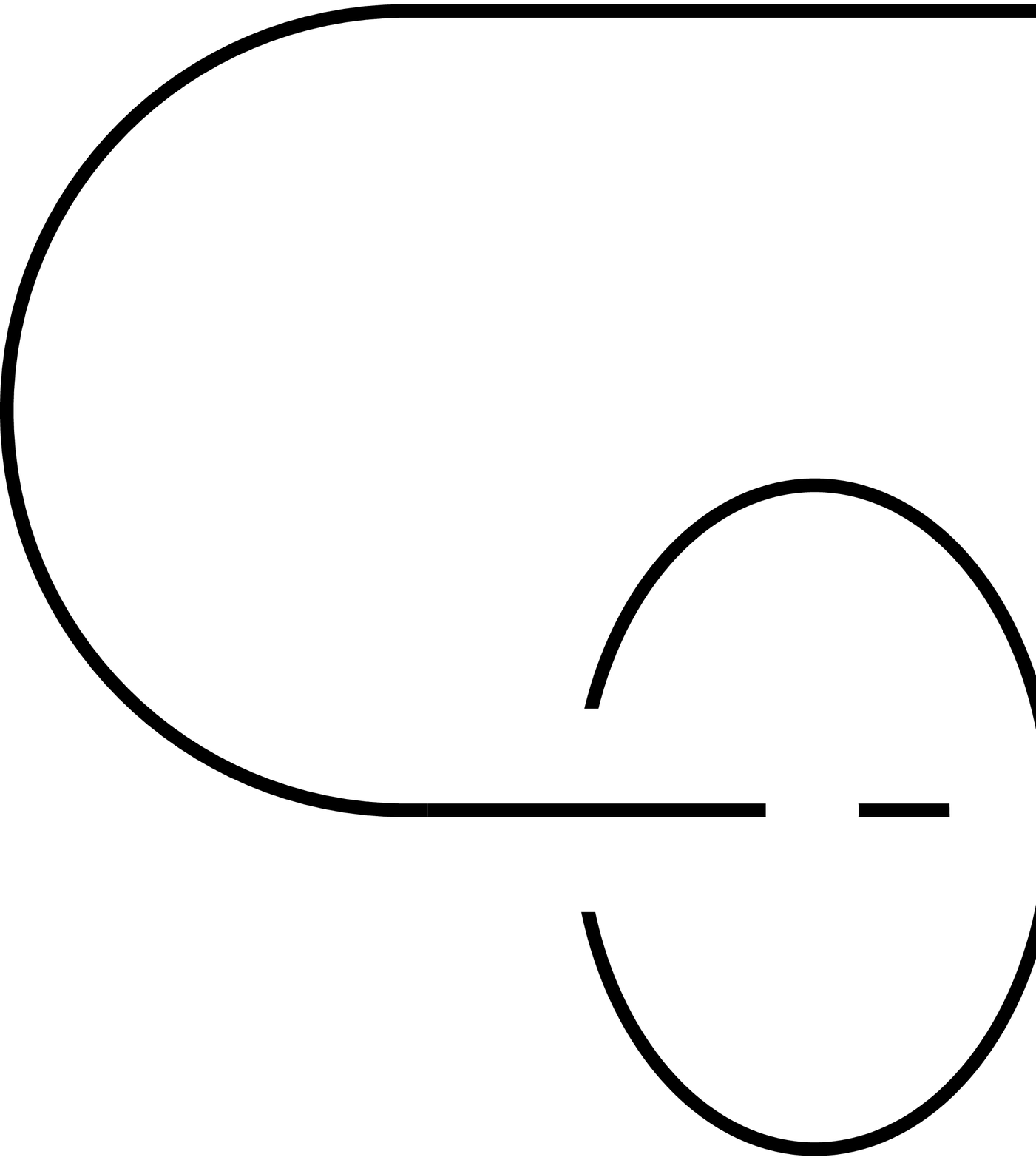
\else \vskip 10cm \fi
\begin{narrow}{0.3in}{0.3in}
\caption{
\bf{$Y(S^2;r;(a_1,b_1), \ldots , (a_n,b_n))$.}}
\label{genseif}
\end{narrow}
\end{center}
\end{figure}

It will be convenient to arrange that every singular fibre has $a_i>1$. We will therefore describe a Seifert manifold as $$Y(F;r;(a_1,b_1), \ldots ,(a_n,b_n)),$$
where each $a_i>1$ with $a_i,b_i$ coprime and $r \in \mathbb{Z}$. A surgery diagram is given in Figure \ref{genseif} when $F=S^2$. Note that this differs slightly to the notation of \cite{CH} and \cite{NR} where the Seifert invariants are always chosen so that $r=0$. We will call the surgery curve with framing $r$ the central curve. The generalised Euler invariant of $Y$ is given by $$e(Y) = \frac{b_1}{a_1}+ \ldots + \frac{b_n}{a_n} -r.$$

When $Y$ is a Seifert manifold with base orbifold $S^2$ and $e(Y)>0$ a negative definite 4-manifold with boundary $Y$ can be obtained by a standard plumbing construction \cite{NR}. The same construction gives a semi-definite 4-manifold when $e=0$. Figure \ref{allp3}(c) shows this plumbing for $Y(S^2;0;(3,1),(3,-1),(3,1))$.

\begin{figure}[hbp] 
\begin{center}
\ifpic
\def\svgwidth{10cm}
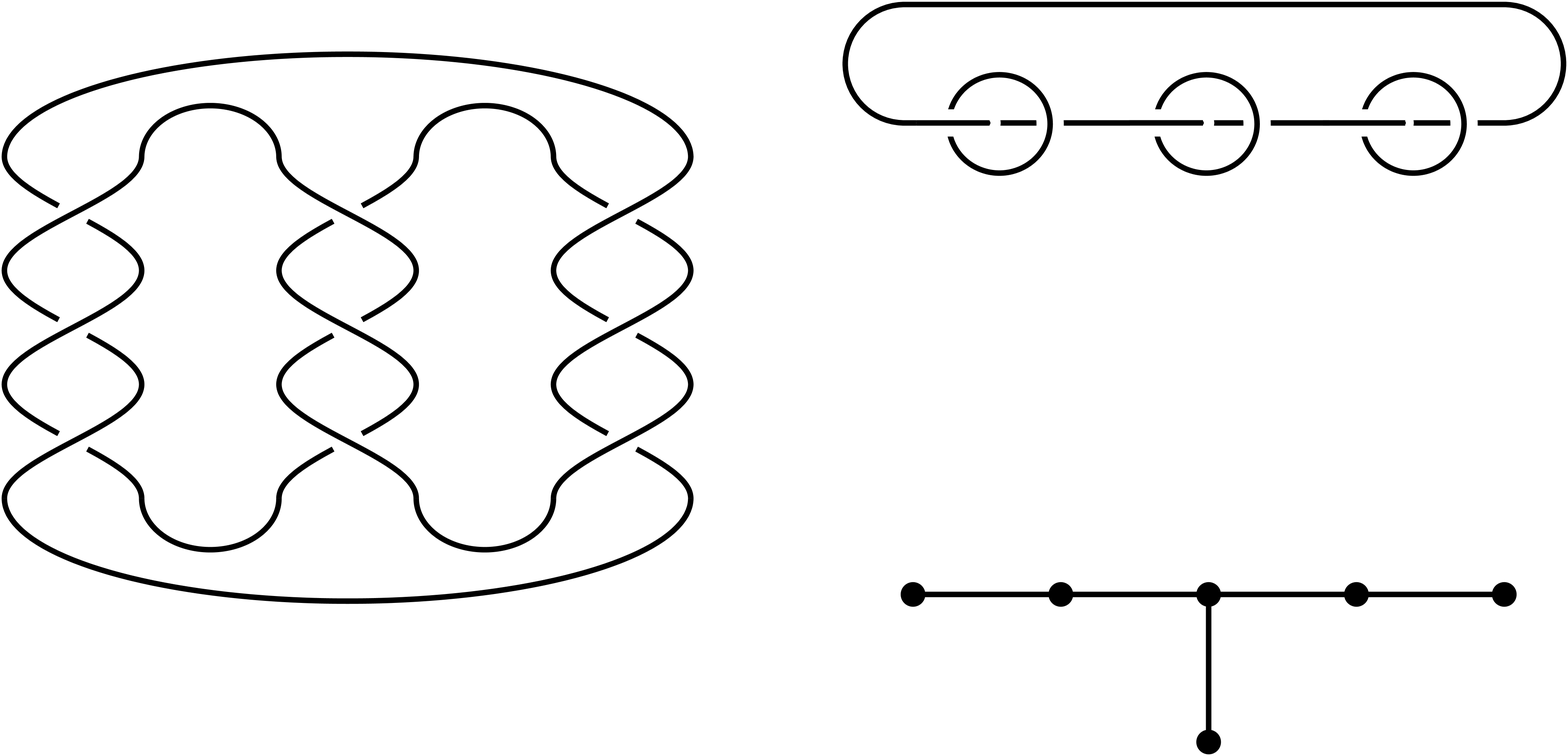
\else \vskip 5cm \fi
\begin{narrow}{0.3in}{0.3in}
\caption{
\bf{(a) The pretzel knot $P(3,-3,3)$; (b) a surgery diagram for its double branched cover $Y(3,-3,3)$; (c) the plumbing graph for a negative definite 4-manifold with boundary $Y(3,-3,3)$.}}
\label{allp3}
\end{narrow}
\end{center}
\end{figure}
With a minor modification, we can find negative definite 4-manifolds bounding Seifert manifolds with any base orbifold. We describe these 4-manifolds in Proposition \ref{definites}. In particular, for a non-orientable base surface, we obtain a negative definite 4-manifold regardless of $e(Y)$.

We get a result similar to Theorem \ref{sumslens}. A pair of Seifert invariants are called complementary if they are equivalent to ones of the form $(a,b), (a, -b)$. We also extend this to a notion of weak complementary pairs by allowing pairs $(a,b), (a,-b')$ where $bb' \equiv 1$ mod $a$.


\begin{Thm}\label{non-ori}
Let $Y$ be a Seifert manifold with non-orientable base surface $F$. If $Y$ embeds smoothly in $S^4$ then the Seifert invariants of $Y$ occur in weak complementary pairs. In addition, whenever there are Seifert invariants $(a_i,b_i), (a_j,b_j)$ with $a_i,a_j$ both even, then $a_i=a_j$ and $b_i \in \{\pm b_j, \pm b_j'\}$.
\end{Thm}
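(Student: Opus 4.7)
The plan is to adapt the strategy that gave Theorem \ref{sumslens}, as extended to Seifert manifolds in \cite{GJ}. A smooth embedding $Y \hookrightarrow S^4$ yields $S^4 = U \cup_Y V$ with $\partial U = Y = -\partial V$; by Mayer--Vietoris one has $H_i(Y) \cong H_i(U) \oplus H_i(V)$ for $i=1,2$. The non-orientability of $F$ is what makes the argument go through: Proposition \ref{definites} then furnishes a negative definite plumbing $X$ with $\partial X = Y$ \emph{irrespective of the sign of $e(Y)$}. Since $-Y$ also has non-orientable base (with Seifert invariants $(a_i,-b_i)$), the same proposition provides a negative definite $X'$ with $\partial X' = -Y$. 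I would form the closed, smooth $4$-manifolds
\[
Z = X \cup_Y (-U), \qquad Z' = X' \cup_{-Y} (-V),
\]
and check that $U$ and $V$ are negative semi-definite (so that $Z$ and $Z'$ are negative definite), a step which needs care here because $Y$ is not forced to be a rational homology sphere. Donaldson's theorem then supplies isometric embeddings of the intersection lattices of $X$ and $X'$ into standard diagonal lattices,
\[
\phi : (H_2(X)/\mathrm{tors},\, Q_X) \hookrightarrow (\mathbb{Z}^N, -I), \qquad \phi' : (H_2(X')/\mathrm{tors},\, Q_{X'}) \hookrightarrow (\mathbb{Z}^{N'}, -I).
\]

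I would then analyse these embeddings combinatorially. The plumbing graph for $X$ contains one linear chain for each singular fibre $(a_i,b_i)$, with weights given by the negative continued fraction of $a_i/b_i$, and these chains attach to a central subgraph that encodes the non-orientable genus of $F$. Using the rigidity results of Lisca \cite{lisca, lisca2} together with the Seifert extensions in \cite{GJ}, I would argue that in any embedding of such a chain into $(-I)^N$ the vertices must sit in one of only finitely many combinatorial patterns, and that chains must come in pairs: the partner of a chain realising $(a_i,b_i)$ realises either $(a_i,-b_i)$ or $(a_i,-b_i')$ with $b_i b_i' \equiv 1 \pmod{a_i}$. These are precisely the complementary and weakly complementary configurations. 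Repeating the analysis with $\phi'$ (from the $-Y$ side) confirms the pairing and rules out spurious matchings.

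The main obstacle is handling the non-orientable central block. Unlike in Theorem \ref{sumslens} the chains are not disjoint: they share attachments to a piece whose intersection form carries more subtle information. The key technical lemma I expect to prove is that the vectors supported on the central block can land in only a tightly restricted subset of the diagonal basis of $(\mathbb{Z}^N, -I)$; once that is established, the remaining argument reduces to a Lisca-style pairing of the arms. The refinement for even $a_i, a_j$ should then come from a parity or characteristic-covector argument: a chain whose first weight is $-2$ imposes a mod $2$ condition on the image of its leaf vector, and compatibility of two such chains with the characteristic class of $Z$ forces their lengths and initial continued-fraction data to coincide, yielding $a_i = a_j$ and $b_i \in \{\pm b_j, \pm b_j'\}$.
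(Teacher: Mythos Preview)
Your proposal has a significant structural misunderstanding and misses the mechanism behind the even-$a_i$ refinement.

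\textbf{There is no central block.} Proposition \ref{definites} says that when the base $F$ is non-orientable, the intersection form $Q_{X_F}$ is equivalent to $Q_{X_L}$, the form of the disjoint union of linear chains (one per singular fibre). The 1-handles inserted for the $\mathbb{RP}^2$ summands kill the central curve's contribution to $H_2$. So the lattice you must embed is \emph{exactly} the one Lisca analysed for connected sums of lens spaces, and the ``main obstacle'' you identify---handling a central subgraph---does not exist. Your proposed key technical lemma about vectors supported on that block is therefore addressing a phantom problem, and the reduction to Lisca's machinery is immediate rather than something to be fought for.

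\textbf{The double-subset condition is what drives the argument.} Gluing $X$ to \emph{both} pieces $U_1,U_2$ of the splitting (Corollary \ref{diagnon}) gives two square-root-order subgroups $H_1,H_2 \leq \operatorname{coker} Q_X$ with $G(S)\cong H_i\oplus H_i$ and $|H_1\cap H_2|\leq 2$. The first of these conditions---that $S$ is a \emph{linear double subset}---is what rules out bad components (Propositions \ref{twoBC}, \ref{allBC}) and forces Proposition \ref{complementarystuff} to apply, yielding the weak complementary pairs. Your plan uses $U$ with $X$ and $V$ with $X'$ for the opposite orientation; this does not produce the $H_1,H_2$ pair inside a single $\operatorname{coker} Q_X$, and without the double-subset condition Lisca's classification alone does not exclude the bad-component configurations.

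\textbf{The even-$a_i$ refinement is not a parity/characteristic-covector argument.} In the paper it comes from the intersection bound $|H_1\cap H_2|\leq 2$: for each ratio $a/b$ with $a$ even, the element $\tfrac{a}{2}(t_1+s_1+\cdots+t_l+s_l)$ lies in both $H_1$ and $H_2$, so at most one such $a/b$ can occur. Your proposed mod $2$ compatibility of leaf vectors with the characteristic class of $Z$ does not obviously see this global constraint, and it is unclear how it would force $a_i=a_j$ rather than merely constrain individual chains.
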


While this result does not put any restriction on the Euler invariant of $Y$, it is shown in \cite{CH} that for a given set of Seifert invariants there are only finitely many possible values of $e(Y)$ for which an embedding is possible and, in the case of complementary pairs with every $a_i$ odd, these are completely described.

We also consider orientable base surfaces. An interesting special case, considered by Hillman \cite{hillman}, occurs when $e(Y)=0$. These are the only examples where $b_1(Y)$ is odd.

\begin{Thm}\label{e=0}
Let $Y$ be a Seifert manifold with orientable base surface $F$ and $e(Y)=0$. If $Y$ embeds smoothly in $S^4$ then the Seifert invariants of $Y$ occur in complementary pairs.
\end{Thm}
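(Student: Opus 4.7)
The plan is to extend the Donaldson-theoretic obstruction used in Theorems \ref{sumslens} and \ref{non-ori} to the semi-definite setting that arises when $Y$ is not a rational homology sphere. Given an embedding $Y \hookrightarrow S^4$, write $S^4 = X_1 \cup_Y X_2$, and let $P$ be the plumbed 4-manifold from Proposition \ref{definites} with $\partial P = Y$. Because the base is orientable with $e(Y) = 0$, the intersection form on $H_2(P;\mathbb{Q})$ is negative semi-definite with a one-dimensional null subspace spanned by a class $\xi$ corresponding to a regular Seifert fibre, while the quotient $H_2(P;\mathbb{Q})/\langle \xi \rangle$ is negative definite. A Mayer--Vietoris computation gives $H_1(Y;\mathbb{Q}) \cong H_1(X_1;\mathbb{Q}) \oplus H_1(X_2;\mathbb{Q})$; since $b_1(Y) = 2g+1$ is odd (where $g$ is the genus of the base), the two halves are necessarily asymmetric, and this asymmetry will be exploited.

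Form the closed 4-manifold $W_i = P \cup_Y (-X_i)$ for $i = 1, 2$. The objective is to put $W_i$ into a form where Donaldson's theorem constrains its intersection form, so that the inclusion of $P$ into $W_i$ realises $H_2(P)/\langle \xi \rangle$ as a sublattice of a standard diagonal lattice. I would first kill $\pi_1(W_i)$ up to torsion by attaching 2-handles along generators of $H_1(W_i;\mathbb{Z})$, producing a simply-connected closed 4-manifold $W_i'$, and then verify via Novikov additivity together with the Mayer--Vietoris splittings that $b_2^+(W_i') = 0$. Donaldson's theorem then identifies $H_2(W_i';\mathbb{Z})/\mathrm{tors}$ with the standard diagonal lattice $(-1)^N$, and the composition $H_2(P)/\langle \xi \rangle \hookrightarrow H_2(W_i')/\mathrm{tors} \cong \mathbb{Z}^N$ gives the desired embedding of lattices.

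The final phase is the combinatorial lattice analysis: Lisca's classification of lens-space linear plumbings (as used in the proof of Theorem \ref{sumslens}) constrains each singular-fibre chain of $P$ to a restricted ``good'' shape in $\mathbb{Z}^N$. The relation $\xi = 0$ in the image translates, via the geometric interpretation of $\xi$ as the regular-fibre class, into the equation $\sum b_j/a_j = 0$ expressed as a linear dependency among the vectors of the chains; combined with the good-shape constraint this forces the singular fibres to appear in pairs. Running the argument for both $W_1'$ and $W_2'$ and comparing the resulting pairings upgrades the conclusion from weak complementary pairs (as in Theorem \ref{non-ori}) to the strict form $(a, b), (a, -b)$ demanded by the statement.

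The main obstacle, and the step where the hypotheses $e(Y) = 0$ and $b_1(Y)$ odd are used most essentially, is establishing that $W_i'$ is genuinely negative definite. Since $P$ is only semi-definite and $X_i$ has a priori non-trivial second homology, positive-square classes could in principle survive into $W_i'$; ruling these out requires careful signature bookkeeping that combines Novikov additivity, the Mayer--Vietoris data for the $S^4$-splitting, and the effect of the handle attachments used to kill $H_1$. A secondary technical point is controlling the interaction between the null vector $\xi$ and the 2-handles, ensuring that the lattice $H_2(P)/\langle \xi \rangle$ embeds faithfully into the diagonal lattice produced by Donaldson's theorem rather than being partially collapsed.
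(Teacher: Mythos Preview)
Your overall strategy matches the paper's: glue the semi-definite plumbing to a half of the $S^4$-splitting, show the result is negative definite, apply Donaldson's theorem, then run Lisca's lattice combinatorics. But two steps in your plan do not work as written.

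First, the surgery to kill $\pi_1$ is both unnecessary and unhelpful. Donaldson's theorem has no simple-connectivity hypothesis, and surgery on a rationally nontrivial circle in a closed oriented $4$-manifold preserves both $b_2$ and $\sigma$ (hence $b_2^+$, since the intersection form on a closed oriented $4$-manifold is always nondegenerate), so it cannot manufacture negative definiteness. The paper's route (Corollary~\ref{diag1}) is more direct: the kernel $K$ of $H_1(Y;\mathbb{Q}) \to H_1(P;\mathbb{Q})$ has rank one, and the Mayer--Vietoris isomorphism $H_1(Y) \cong H_1(U_1) \oplus H_1(U_2)$ forces $K$ to inject into at least one $H_1(U_i)$. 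For \emph{that} $i$, Proposition~\ref{homologycount} gives $b_2(W_i) = b_2(P) - 1 = |\sigma(P)|$, so $W_i$ is already negative definite without modification. Only this one piece is used; there is no reason the other $W_j$ should be definite, so your plan to run the argument on both halves cannot be carried out in general.

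Second, and consequently, your proposed mechanism for the weak-to-strict upgrade is the wrong one. In Theorems~\ref{sumslens} and~\ref{non-ori} the upgrade does come from comparing the two subsets, but here only one subset is available. The paper instead exploits the central vertex of the star-shaped plumbing. The resulting subset $S$ is \emph{rectangular} ($b_2(P)$ vectors in $\mathbb{D}^{b_2(P)-1}$); deleting the central vector $v$ leaves a linear subset $S'$ to which Proposition~\ref{complementarystuff} applies, yielding weak complementary pairs via irreducible pieces $T_i$ with $c(T_i)=2$. The presence of $v$ is then used twice: it rules out $c(T_i)=1$ and bad components (since adjoining $v$ to such a $T_i$ would give a rectangular subset with negative-definite incidence matrix, a contradiction), and restricting $v$ to the columns supporting each $T_i$ gives a vector $v_i$ which, by the same singularity argument, must satisfy $v_i \cdot v_i = -1$. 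Combined with Lisca's explicit description of the $T_i$ this forces the strict form $(a,b),(a,-b)$. Your sketch contains no analogue of this central-vertex argument.
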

\begin{Rmk}This holds even for topological embeddings when $F=S^2$ \cite{hillman}.
\end{Rmk}
When $Y$ has complementary pairs of Seifert invariants with every $a_i$ odd and $e(Y)=0$ it embeds smoothly in $S^4$ \cite{CH}.

The question of embedding for Seifert manifolds with orientable base and $e \neq 0$ appears to be more difficult so we will only consider the following special case.
Let $Y$ be a Seifert manifold with base surface $S^2$, at most 4 singular fibres, each described by $(a_i,b_i)$ with $b_i=\pm1$ and $r=0$. In this case, the legs in the standard negative definite plumbings will have a simpler form. Every leg will either consist of single vertex with a negative weight or a chain of vertices, all with weight $-2$. We will denote these manifolds as $Y(a_1b_1, \ldots , a_n b_n)$.
We will also assume $n \geq 3$ as this gives a lens space when $ n \leq 2$. Integer surgery diagrams are shown in Figure \ref{pretzelc}.

These manifolds are the double branched covers of pretzel links with up to 4 strands. The manifold $Y(a_1, \ldots , a_n)$ is the double branched cover of the pretzel link $P(a_1, \ldots , a_n)$.

\begin{figure}[htbp] 
\begin{center}
\ifpic
\def\svgwidth{5cm}
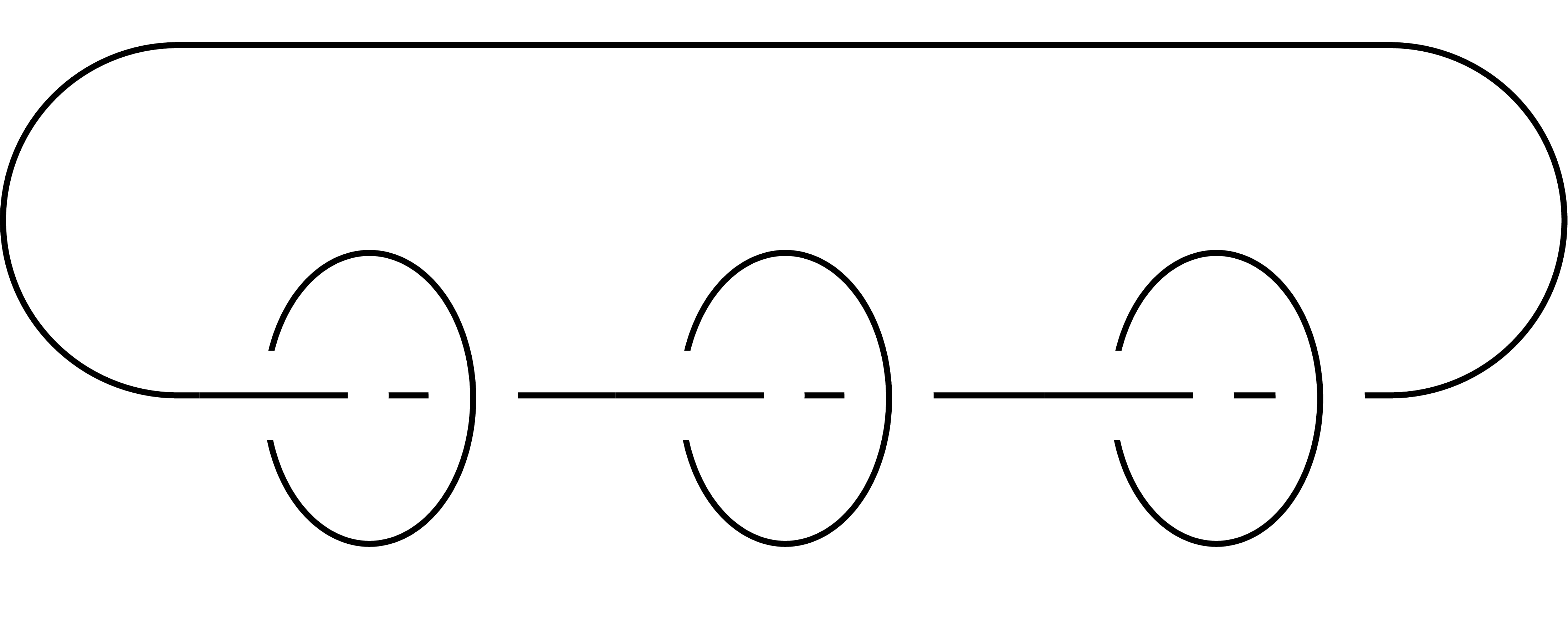
\vskip 1cm
\def\svgwidth{6cm}
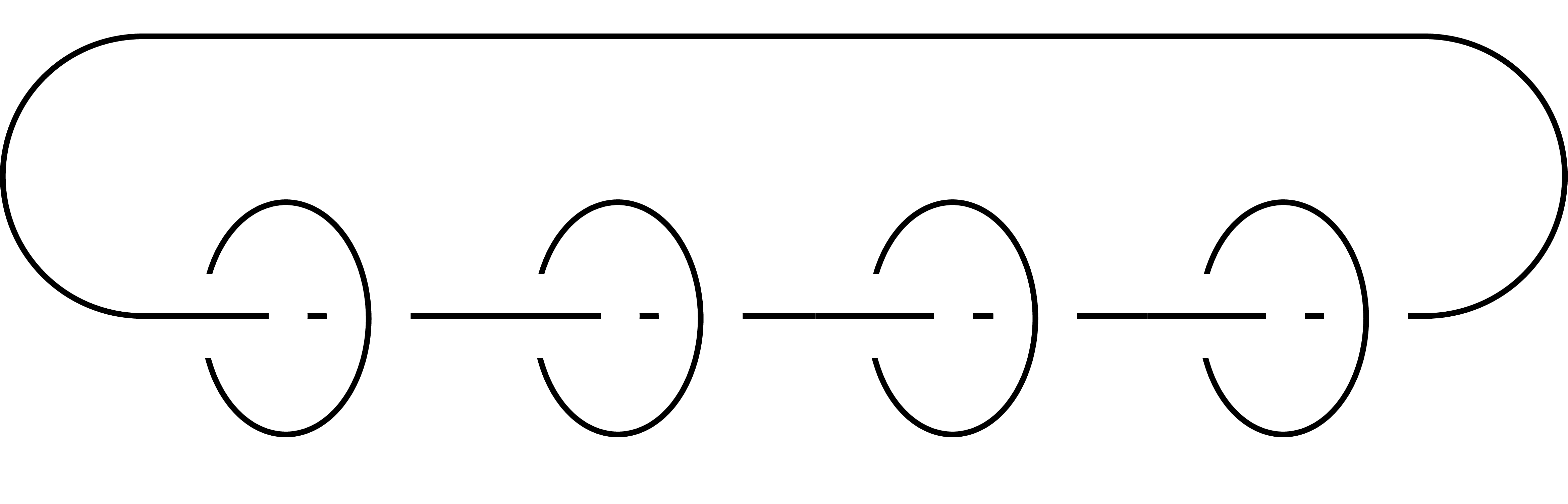
\else \vskip 5cm \fi
\begin{narrow}{0.3in}{0.3in}
\caption{
\bf{$Y(a,b,c)$ and $Y(a,b,c,d)$.}}
\label{pretzelc}
\end{narrow}
\end{center}
\end{figure}

With this notation in place, we have the following result.
\begin{Thm}\label{pretzelcover34}
Let $Y$ be of the form $Y(a,b,c)$ or $Y(a,b,c,d)$ where $a,b,c \in \mathbb{Z}\backslash\{-1, 0, 1\}$ and $d \in \mathbb{Z}\backslash \{0\}$.
If $Y$ embeds smoothly in $S^4$ then it is (possibly orientation-reversing) diffeomorphic to one of the following \begin{itemize} \item $Y(a,-a,a)$; 

\item $Y(a,-a,a,-a)$; \item $Y(a,-a,b,-b)$ with $b$ odd; \item $Y(a\pm 1,-a,a,-a)$;
\item$Y(2\lambda-1, -2\lambda-1, -2\lambda^2)$.
\end{itemize}

In addition, all but the last of these do embed smoothly in $S^4$.
\end{Thm}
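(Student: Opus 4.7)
The proof divides into an obstruction half (any embedded $Y$ lies in one of the five families) and a construction half (the first four families do embed).

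For the obstruction half, suppose a smooth embedding $Y \hookrightarrow S^4$ exists. Since $Y$ is a rational homology sphere, this yields a splitting $S^4 = U \cup_Y -V$ with $U$ and $V$ smooth rational homology $4$-balls. Let $X$ be the negative definite plumbing bounding $Y$ from Proposition \ref{definites}, and $X'$ the analogous one bounding $-Y$. In the pretzel setting each plumbing is a star with a central vertex and at most four legs, and every leg is either a single vertex of negative weight or a chain of $(-2)$-framed vertices. Gluing $X \cup -U$ and $X' \cup -V$ produces smooth closed negative definite $4$-manifolds, so Donaldson's theorem yields isometric embeddings of the intersection forms of $X$ and $X'$ into $(\mathbb{Z}^N, -\mathrm{Id})$. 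The crux is a combinatorial case analysis of these two lattice embeddings together with their compatibility (after orientation normalization, which is why the conclusion is stated only up to orientation reversal). Systematic bookkeeping of where basis vectors for the legs and the central vertex may land, and elimination of inadmissible sign patterns for $(a,b,c)$ and $(a,b,c,d)$, cuts the options down to the five listed families. The Heegaard Floer $d$-invariant and the Neumann--Siebenmann $\overline{\mu}$-invariant are brought in as auxiliary obstructions to dispose of candidates that survive Donaldson's diagonalisation but still cannot embed.

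For the construction half, I would exhibit explicit embeddings of the first four families. The general principle is that the double branched cover of a doubly slice link embeds in $S^4$, so it suffices either to verify that the relevant pretzel link is doubly slice or to present a direct Kirby-calculus description of $S^4$ containing the given $Y$ as a separating hypersurface. The symmetric families $Y(a,-a,a)$, $Y(a,-a,a,-a)$, and $Y(a,-a,b,-b)$ with $b$ odd are the easiest: an obvious involution on the underlying pretzel link realizes it as a cross section of an unknotted $2$-sphere. For $Y(a \pm 1,-a,a,-a)$ I expect a short handle-diagram argument that modifies the symmetric decomposition of $S^4$ by a single twist, producing the slightly asymmetric pretzel on the middle level.

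The principal obstacle is the lattice case analysis in the obstruction half: each leg of the plumbing may be a chain of arbitrary length, the two embeddings from $U$ and $V$ must be tracked simultaneously, and the number of sign combinations for $(a,b,c)$ or $(a,b,c,d)$ is substantial. Reaching the clean statement of the theorem---and in particular isolating the exceptional family $Y(2\lambda-1,-2\lambda-1,-2\lambda^2)$, which survives every obstruction but whose embedding status remains open---requires patient combinatorial work rather than a single clever idea.
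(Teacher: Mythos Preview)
Your outline has the right ingredients but misorders their roles and contains a genuine error. First, the claim that $Y$ is always a rational homology sphere is false: when $e(Y)=0$ (for example $Y(a,-a,b,-b)$) one has $b_1(Y)=1$, and the paper treats this case separately via Theorem~\ref{e=0} before invoking any Donaldson-type argument. Second, in the paper the $\overline{\mu}$ invariant is not an auxiliary cleanup tool but the \emph{primary} organizing obstruction (Corollary~\ref{mubar}): the case split is by the number of spin structures (equivalently link components), and requiring three or more $\overline{\mu}$ invariants to vanish already forces most of the arithmetic relations among $a,b,c,d$. Donaldson's theorem then enters in a sharper form than you describe: rather than embedding $Q_X$ and $Q_{X'}$ separately, the paper glues the \emph{same} $X$ to both $U$ and $V$ and uses Corollary~\ref{diag0} to obtain two factorizations $Q_X=-A_iA_i^t$ whose associated subgroups $H_i=\operatorname{im}A_i/\operatorname{im}Q_X$ must satisfy $\operatorname{coker}Q_X=H_1\oplus H_2$. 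This complementarity condition is what rules out, for instance, $Y(a,b,-a)$ with $a\neq b$ after Greene--Jabuka's lattice classification leaves $\lambda\in\{-1,0\}$. The paper also leans heavily on the existing lattice classifications in \cite{GJ} and \cite{Lec} rather than redoing the combinatorics.

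On the construction side, your ``obvious involution'' does not exist in the generality you claim. For $P(a,-a,a)$, $P(a,-a,a,-a)$ and $P(a\pm1,-a,a,-a)$ the paper gives explicit band moves on a single link $L_{a,n}$ (Proposition~\ref{lan}) and verifies the unknotted-sphere criterion of Corollary~\ref{dscor}. The case $P(a,-a,b,-b)$ with $b$ odd is substantially harder: there is no evident symmetry, and the paper instead exhibits a band move to $T_{2,|b-a|}\#-T_{2,|b-a|}$ (or a connected sum of two such pairs when $a$ is also odd), then appeals to an intermediate lemma (Lemma~\ref{torus}) giving an explicit unknotted-sphere description of $T_{2,2k+1}\#-T_{2,2k+1}$ with $2k$ carefully chosen bands. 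Your proposal would need to supply this argument.
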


Note that $Y(a,b,\pm1)$ and $Y(a,b,\pm1,\pm1)$ are lens spaces so the constraints imposed in the above statement are merely for convenience. 

This result relies on a combination of the obstruction from Donaldson's theorem and other methods. Specifically, we use the $d$ invariant of a spin$^c$ structure from Heegaard-Floer homology and the Neumann-Siebenmann $\overline{\mu}$ invariant of a spin structure, a lift of the Rochlin invariant. The former, as shown in \cite{GJ}, gives a useful strengthening of the obstruction from Donaldson's theorem. The embeddings arise as double branched covers of doubly slice links.

Some of the manifolds considered by Theorem \ref{pretzelcover34} have $e(Y)=0$. In particular, we see that the converse of Theorem \ref{e=0} is not true in general.

Embeddings can also be constructed using Kirby calculus. We use this to give the following example.
\begin{Eg}The manifold $Y(S^2;0;(4,1),(4,1),(12,-7))$ can be embedded smoothly in $S^4$.
\end{Eg}This manifold is $(1)$ from the list of unknown cases in Section 6 of \cite{Budney}.

\bf{Organisation}\normalfont.
Section \ref{construction} considers doubly slice links and produces various embeddings, in particular those required by Theorem \ref{pretzelcover34}. In Section \ref{diagonalisation}, the obstruction to embedding from Donaldson's diagonalisation theorem is derived and in Section \ref{lins} it is applied, in conjunction with the combinatorial machinery developed by Lisca, to prove Theorems \ref{sumslens}, \ref{non-ori} and \ref{e=0}. The relevant properties of the $d$ and $\overline{\mu}$ invariants are described in Section \ref{spin}. Finally in Section \ref{pretzelcovers}, Theorem \ref{pretzelcover34} is proved using the obstructions from Sections \ref{diagonalisation} and \ref{spin}.

\bf{Acknowledgements}\normalfont. I am grateful to Brendan Owens for suggesting this problem and many useful conversations. I also thank Ana Lecuona, Nikolai Saveliev and Jonathan Hillman for helpful comments. 

\section{Constructions}\label{construction}

\subsection{Constructing embeddings using doubly slice links}
This section will describe how to use doubly slice links to produce smooth embeddings of 3-manifolds in $S^4$.

An embedding of $S^n$ in $S^{n+2}$ is unknotted if it is the boundary of an embedded $D^{n+1}$. We will call a link $L$ in $S^3$ (smoothly) doubly slice if it is a cross-section of an unknotted (smooth) embedding of $S^2$ in $S^4$.

\begin{Lem}\label{dsemb}Let $L$ be a link in $S^3$ and $Y$ be the n-fold cyclic branched cover of $S^3$ with branch set $L$. If $L$ is smoothly doubly slice then $Y$ smoothly embeds in $S^4$.
\end{Lem}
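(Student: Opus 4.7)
The plan is to exploit the symmetry of cyclic branched covers and the fact that the $n$-fold cyclic branched cover of $S^4$ along an unknotted $2$-sphere is again $S^4$.

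First I would set up the geometric picture. Write $S^4 = B^4_+ \cup_{S^3} B^4_-$ as the union of two $4$-balls along their common boundary $S^3$, chosen so that the unknotted $2$-sphere $F \subset S^4$ realising $L$ as a cross-section meets the equatorial $S^3$ transversely in $L$. Then $F$ splits as $F = D_+ \cup_L D_-$ where $D_\pm \subset B^4_\pm$ are smoothly properly embedded $2$-disks with $\partial D_\pm = L$.

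Next I would form the $n$-fold cyclic branched cover $\widetilde{S^4} := \Sigma_n(S^4, F)$. Because $F$ is unknotted, it bounds a smoothly embedded $3$-ball $B^3 \subset S^4$, and the complement of an open tubular neighbourhood of $F$ is diffeomorphic to $D^3 \times S^1$ (with meridian generating $\pi_1 = \mathbb{Z}$). The $n$-fold cyclic cover of $D^3 \times S^1$ is again $D^3 \times S^1$, and re-gluing the tubular neighbourhood $S^2 \times D^2$ of $F$ recovers the standard decomposition of $S^4$. Hence $\widetilde{S^4} \cong S^4$.

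The branched cover respects the decomposition of $S^4$ along the equator: restricting to each half gives the $n$-fold cyclic branched covers $\Sigma_n(B^4_\pm, D_\pm)$, which are smooth compact $4$-manifolds whose common boundary is the $n$-fold cyclic branched cover $\Sigma_n(S^3, L) = Y$. Thus
\[
S^4 \;\cong\; \Sigma_n(B^4_+, D_+) \;\cup_Y\; \Sigma_n(B^4_-, D_-),
\]
exhibiting $Y$ as a smoothly embedded hypersurface in $S^4$.

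The only step requiring care is the identification $\Sigma_n(S^4, F) \cong S^4$ for $F$ unknotted; everything else is functoriality of branched covering under splitting $S^4$ along a transverse $S^3$. I would phrase that identification either via the explicit handle decomposition above or, more invariantly, by observing that $\Sigma_n(S^4, F)$ depends only on the isotopy class of $F$ together with a choice of surjection $\pi_1(S^4 \setminus F) \to \mathbb{Z}/n$, and for the unknot both are standard.
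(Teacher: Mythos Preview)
Your proposal is correct and follows essentially the same approach as the paper: both arguments reduce to the fact that the $n$-fold cyclic branched cover of $S^4$ along an unknotted $S^2$ is again $S^4$, and then observe that the equatorial $(S^3,L)$ lifts to $Y$ inside this cover. The paper justifies $\Sigma_n(S^4,F)\cong S^4$ via repeated suspension of the $n$-fold cover $S^1\to S^1$ (citing Rolfsen), whereas you use the complement description $S^4\setminus\nu(F)\cong D^3\times S^1$; these are equivalent viewpoints.

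One minor inaccuracy: when $L$ has more than one component, the pieces $D_\pm = F\cap B^4_\pm$ are planar surfaces but not $2$-disks (a disk has connected boundary). This does not affect your argument, since nowhere do you actually use that $D_\pm$ are disks---your identification $\Sigma_n(S^4,F)\cong S^4$ only uses that $F$ is unknotted, and the decomposition along $Y$ only uses that the branched covering restricts to each half.
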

\begin{proof}
The n-fold cyclic branched cover of $S^4$ with branch set an unknotted $S^2$ is $S^4$. This comes from repeated suspension of the unbranched n-fold cover of $S^1$ over itself, where the branched covering map is extended in the obvious way (see \cite[Example 10.B.4]{rolfsen}).

If $L$ is doubly slice then the pair $(S^3,L)$ sits inside $(S^4,S^2)$. The preimage of this subset gives $Y$ embedded in $S^4$.
\end{proof}

A source of doubly slice knots is Zeeman's twist-spinning construction \cite{Zeeman}:
\begin{Thm}\label{Zeemanthm}
Let $K$ be any knot. Then $K\#-K$ is doubly slice.
\end{Thm}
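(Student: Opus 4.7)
The plan is to apply Zeeman's twist-spinning construction to exhibit an unknotted $2$-sphere in $S^4$ whose equatorial cross-section is $K\#-K$. The crucial input I would import from \cite{Zeeman} is that the $1$-twist-spin of any knot is smoothly isotopic to the standard unknotted $2$-sphere in $S^4$; with this in hand, the proof reduces to a single explicit cross-section computation.

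First I would set up the twist-spin. Remove a small unknotted $3$-ball neighbourhood of a point of $K$ to obtain a properly embedded knotted arc $K_0 \subset B^3$ with endpoints $p_1, p_2 \in \partial B^3$, arranged so that the segment between them lies on a rotation axis. Decompose $S^4 = \partial(B^3 \times D^2) = (B^3 \times S^1) \cup_{S^2\times S^1}(S^2 \times D^2)$, and let $\phi_\theta \in SO(3)$ denote rotation of $B^3$ by angle $\theta$ about the axis through $p_1$ and $p_2$. The $1$-twist spin of $K$ is then the embedded $2$-sphere
\[
\tau_1 K \;=\; \{(\phi_\theta(x),\theta) : x \in K_0,\ \theta \in S^1\} \,\cup\, (\{p_1,p_2\}\times D^2) \;\subset\; S^4,
\]
which is unknotted by Zeeman's theorem.

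Next I would exhibit the cross-section. Let $J \subset D^2$ be an unknotted arc from $1 \in S^1$ to $-1 \in S^1$ passing through the interior of $D^2$, and define
\[
\Sigma \;=\; (B^3 \times \{1,-1\}) \cup (S^2 \times J) \;\subset\; S^4,
\]
a smoothly embedded $3$-sphere, obtained by doubling $B^3$ along $\partial B^3$ through the cylinder $S^2 \times J$. Inspecting each piece, $\Sigma \cap \tau_1 K$ consists of the arc $K_0$ in $B^3 \times \{1\}$, the rotated arc $\phi_\pi(K_0)$ in $B^3 \times \{-1\}$, and the two parallel arcs $\{p_1,p_2\} \times J$ joining them. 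Concatenating these pieces produces a closed curve in $\Sigma$ that is visibly the connected sum of $K$ with a half-twist-rotated copy of itself.

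The main work, and the only non-formal step, is to identify this closed curve as $K\#-K$ rather than $K\#K$. While $\phi_\pi$ is orientation-preserving on $B^3$, the way it is realised by the twist as $\theta$ traverses $[0,\pi]$ causes the doubled arc $\phi_\pi(K_0)$ to be inserted into $\Sigma$ with its orientation reversed relative to the first summand, so the second factor of the connected sum appears as the concordance-inverse $-K$. Granting this identification, the fact that the unknotted $2$-sphere $\tau_1 K$ cuts the equatorial $\Sigma \cong S^3$ in $K\#-K$ is exactly the statement that $K\#-K$ is doubly slice.
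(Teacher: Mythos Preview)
The paper does not supply its own proof of this statement; it is quoted as a result from Zeeman's twist-spinning paper \cite{Zeeman}, so there is no in-paper argument to compare against. Your sketch is the standard route: exhibit the $1$-twist spin, invoke Zeeman's theorem that it is unknotted, and read off $K\#-K$ as an equatorial slice. The decomposition of $S^4$ and the cross-section $\Sigma$ you write down are correct, and the intersection $\Sigma\cap\tau_1 K$ is exactly the closed curve you describe.

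One small correction to your orientation discussion: the appearance of $-K$ in the second summand is not caused by the twist. Since $\phi_\pi$ is isotopic to the identity through rotations fixing the axis, $\phi_\pi(K_0)$ is isotopic to $K_0$ rel endpoints; indeed the same cross-section $\Sigma$ applied to the $0$-twist spin (or any $n$-twist spin) meets it in an isotopic curve. What produces the mirror is that $\Sigma\cong S^3$ is assembled from two copies of $B^3$ joined along their boundaries by the cylinder $S^2\times J$, and a coherent orientation on $\Sigma$ forces one copy of $B^3$ to carry the reversed orientation. The pair $(-B^3,K_0)$ is then the tangle for $-K$, and the connected sum is $K\#-K$ as claimed. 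With that adjustment your argument is complete; the genuine content you are importing is precisely Zeeman's theorem that the $1$-twist spin is unknotted, which is exactly what the paper's citation points to.
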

The special case when $K$ is a 2-bridge knot is of particular interest. The double branched cover of a 2-bridge knot is a lens space $L(p,q)$ with $p$ odd. All such lens spaces arise in this way so applying Zeeman's result to connected sums of 2-bridge knots gives all the embeddings required by Theorem \ref{sumslens}.

To produce more examples of doubly slice links we look at embeddings of spheres into $S^4$.

Let $f:S^2 \rightarrow S^4$ be a smooth embedding of a sphere $S$. We may delete a point in $S^4$ away from $S$. Then let $r:\mathbb{R}^4 \to \mathbb{R}$ be a projection such that $r \circ f$ is a Morse function for $S$. The preimage of each $t \in \mathbb{R}$ describes a link in $\mathbb{R}^3$, which will be denoted $S_t$, except at the isolated critical values of $r \circ f$. The isotopy type of these links only change when we pass through one of these critical values.
At a minimum or maximum of the Morse function the link changes by the addition or removal of an unknotted component while at a saddle point the cross-section changes by a band move.


We will use the following theorem of Scharlemann:
\begin{Thm}[Main theorem of \cite{Scharlemann}]
Let $\gamma_1$ and $\gamma_2$ be knots such that some band move on the split link $L=\gamma_1 \cup \gamma_2$ gives the unknot.
Then $\gamma_1$ and $\gamma_2$ are unknots and the band move is the connected sum.
\end{Thm}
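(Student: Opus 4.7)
\medskip

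\noindent\textbf{Proof proposal.}
The plan is to reduce everything to an analysis of how a spanning disk for the resulting unknot can meet the sphere that separates $\gamma_1$ from $\gamma_2$. Let $S\subset S^3$ be a 2-sphere that splits $L=\gamma_1\cup\gamma_2$ into its two components; so $S^3=B_1\cup_S B_2$ with $\gamma_i\subset B_i$. Let $b$ be the band, and $U=\gamma_1\cup_b\gamma_2$ the unknot obtained after the band move. Choose a smoothly embedded disk $D\subset S^3$ with $\partial D=U$. First I would isotope $b$ so that it meets $S$ transversely in a single arc; this is possible because $b\cong I\times I$ has its two attaching arcs on opposite sides of $S$. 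Then I would put $D$ in general position with respect to $S$, so that $D\cap S$ consists of a finite collection of simple closed curves in the interior of $D$ together with a single properly embedded arc $\alpha\subset D$ joining the two points $b\cap S\cap U$.

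The next step is the standard innermost-disk reduction. Any simple closed curve of $D\cap S$ that is innermost on $D$ bounds a subdisk $D_0\subset D$ with $D_0\cap S=\partial D_0$; since $S$ is a sphere in $S^3$ (hence separating), $\partial D_0$ also bounds a disk $D_0'$ on $S$, and the 2-sphere $D_0\cup D_0'$ bounds a ball, which permits an isotopy of $D$ removing $\partial D_0$ from $D\cap S$ (while keeping $b$ fixed). Iterating, I would assume $D\cap S=\alpha$. Then $\alpha$ splits $D$ as $D=D_1\cup_\alpha D_2$ with $D_i\subset B_i$, and gluing $D_i$ to the appropriate half of the band $b$ produces a disk in $B_i$ bounded by $\gamma_i$. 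Hence both $\gamma_i$ are unknotted; moreover, the decomposition $D=D_1\cup_\alpha D_2$ exhibits $U$ as the boundary-connect-sum of the two unknots along $b$, so $b$ realises the connected sum.

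The main obstacle, and the whole content of Scharlemann's theorem, is that this innermost-disk scheme is not strong enough on its own: the innermost curves of $D\cap S$ need not bound disks on $S$ that are disjoint from the band $b$, and the reduction can get stuck producing a configuration in which an innermost disk on $D$ meets $S$ in a curve whose companion disk on $S$ is pierced by $b$. In this ``bad'' configuration one cannot just compress; instead one obtains a 2-sphere in $S^3$ bounding a ball in which sits a portion of the link together with an arc of $b$, and one has to rule out that this ball has nontrivial knot-theoretic content. This is where the heavy lifting lies, and is handled in Scharlemann's paper by a delicate combinatorial argument exploiting the fact that the resulting unknot has a disk and that each interaction of the band with the sphere contributes controlled data. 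Rather than reproduce this argument, I would cite \cite{Scharlemann} for this step.

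Finally, once the disk is arranged with $D\cap S=\alpha$, the verification that $\gamma_1,\gamma_2$ are unknots and that $b$ is the connected sum band is a direct assembly argument: the two half-disks $D_1,D_2$ together with the two halves of the band give spanning disks for $\gamma_1$ and $\gamma_2$, and the obvious product structure on $b$ relative to $\alpha$ shows that $b$ is isotopic to the standard band joining $\gamma_1$ and $\gamma_2$ on the boundary of a small ball meeting $S$ in a disk. This is the easy end of the proof; essentially all of the difficulty in the theorem is concentrated in making the innermost-disk reduction actually terminate.
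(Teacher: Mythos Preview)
The paper does not give a proof of this statement at all: it is quoted as the main theorem of \cite{Scharlemann} and used as a black box. So there is no ``paper's own proof'' to compare against, and any self-contained argument you give would necessarily be supplying something the paper deliberately outsources.

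That said, your proposal has a genuine gap, and it occurs before the place where you think the difficulty lies. You assert that the band $b$ can be isotoped so that $b\cap S$ is a single arc, on the grounds that the attaching arcs of $b$ lie on opposite sides of $S$. This is not justified, and in fact it is essentially equivalent to the conclusion of the theorem. If $b\cap S$ is a single arc $\alpha$, then $\alpha$ splits $b$ into half-bands $b_i\subset B_i$, and $(U\cap B_i)\cup\alpha$ is isotopic in $B_i$ to $\gamma_i$; so once you also have $D\cap S=\alpha$ the knots are immediately seen to be unknotted and the band is a connected-sum band, exactly as in your final paragraph. Conversely, for an arbitrary band on a split link there is no reason its intersection with a splitting sphere can be reduced to a single arc: attempts to push subdisks of $b$ across $S$ run into the same obstruction you later identify for $D$, namely that the companion disk on $S$ may be pierced by $L$ or by other parts of $b$. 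Allowing $S$ to move does not help either, since finding a splitting sphere meeting $b$ in one arc is precisely what would force $\gamma_1,\gamma_2$ to be unknots.

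So the hard content of Scharlemann's theorem is already hidden in your first move, not only in the innermost-disk reduction for $D$. Your later acknowledgement that ``the whole content of Scharlemann's theorem'' is the failure of the naive innermost-disk scheme is correct in spirit, but by that point you have already assumed something just as strong. If you want to keep this as a sketch, you should drop the single-arc normalisation of $b$, work with $b\cap S$ consisting of several arcs, and then explain that simultaneously simplifying $b\cap S$ and $D\cap S$ is where Scharlemann's combinatorial/sutured-manifold argument is needed; otherwise, simply citing \cite{Scharlemann}, as the paper does, is the honest thing to do.
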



From this, the following result can be obtained.\footnote{A similar statement appears in \cite{hosokawa}. The proof contains a gap which is repaired by \cite{Scharlemann}.}
\begin{Prop}
Let $S$ be a sphere in $S^4$. Suppose there is a projection $r$ so that the level sets of $S$ are such that $S_0$ is an unknot; all of the maxima occur at some level $t>0$; all of the minima occur at levels $t<0$ and every cross-section is a completely split link.

Then $S$ is an unknotted sphere.
\end{Prop}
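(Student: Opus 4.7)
The plan is to show that $S$ bounds a smooth 3-ball, which by definition means that $S$ is unknotted.

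First, I would analyse the combinatorial structure of the critical points using Euler characteristics and component counts. Since $S$ is a sphere and the level set $S_0$ is a single unknot, both the upper half $D_+ = S \cap \{r \geq 0\}$ and the lower half $D_- = S \cap \{r \leq 0\}$ are topological disks bounded by $S_0$. For the upper half, the Euler characteristic gives $m_+ - s_+ = 1$, where $m_+$ and $s_+$ are the numbers of maxima and saddles above level $0$. On the other hand, the total number of components changes from $1$ at level $0$ to $0$ above the top critical value, so $s_+^{\text{split}} - s_+^{\text{merge}} - m_+ = -1$. Together with $s_+^{\text{split}} + s_+^{\text{merge}} = s_+$ and $m_+ = s_+ + 1$, this forces $s_+^{\text{merge}} = 0$: every saddle in the upper half is a splitting saddle when read upward. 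A symmetric argument shows every saddle in the lower half is a splitting saddle when read downward.

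Second, I would invoke Scharlemann's theorem inductively, working outward from level $0$. Just below the lowest saddle above level $0$ the cross-section is isotopic to the unknot $S_0$. The saddle, read backward, is a band move on a two-component split sublink $\gamma_1 \cup \gamma_2$ yielding the unknot; Scharlemann's theorem then gives that $\gamma_1$ and $\gamma_2$ are unknots and the band is a connected sum band. Since the full cross-section just above the saddle is completely split by hypothesis, every component there is unknotted. Repeating this at the next saddle (which now acts on a single component of a completely split link of unknots), and so on, shows that at every level in the upper half the cross-section is a completely split union of unknots and every saddle is a trivial connected sum band. The lower half is handled identically.

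Third, I would build the bounding 3-ball by filling each cross-section with a disjoint union of Seifert disks in its slice, chosen to vary continuously in $r$. Between critical values the links are isotopic, so the disks may be transported by isotopy. At a maximum (or minimum) the unknot concerned collapses to a point and the corresponding disk caps off. At a saddle, the trivial connected sum band supplied by Scharlemann can be thickened into a small three-dimensional pair-of-pants matching the Seifert disks on either side. Choosing a Seifert disk $D_0$ for $S_0$ at level $0$, this construction yields a 3-ball in the upper half space with boundary $D_+ \cup D_0$; the mirror construction yields one in the lower half space with boundary $D_- \cup D_0$. Gluing along $D_0$ produces a smooth 3-ball bounded by $S$.

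The main obstacle is the third step: a careful check that the continuously varying Seifert disks glue smoothly across each saddle. This uses the full force of Scharlemann's conclusion --- not just that the affected components are unknotted, but that the band itself is a connected sum band --- so that a local neighbourhood of the band can be thickened compatibly with the disks on either side. Once this is in place, the earlier combinatorial steps reduce the general case to this purely local verification.
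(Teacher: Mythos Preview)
Your outline is correct, and your steps 1 and 2 coincide with the paper's preliminary observations (stated just before its proof): every saddle increases the component count as $|t|$ grows, and Scharlemann then forces every cross-section to be an unlink with each band a connected-sum band. The real difference is in step 3. The paper does not build the 3-ball by sweeping Seifert disks through the levels. Instead it argues by induction on the number $n$ of saddles. For the inductive step, it isolates the top saddle: after an isotopy pushing the maxima of the other components below it, the component $K$ affected by that saddle is the \emph{only} component at some regular level $t$ just beneath it. Since $K$ is an unknot, it bounds a disk $D$ in that slice. Surgering $S$ along $D$ splits it into two spheres $S'$ and $S''$, each with fewer saddles and still satisfying the hypotheses; by induction each bounds a 3-cell, and gluing these 3-cells along $D$ gives a 3-cell bounded by $S$.

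Your approach is the direct ``movie'' construction (essentially the Hosokawa--Kawauchi argument alluded to in the paper's footnote). It is valid once Scharlemann supplies the connected-sum bands, but as you yourself flag, checking that the continuously chosen disk systems match smoothly across each saddle requires repeatedly invoking the fact that any two systems of disjoint spanning disks for an unlink in $S^3$ are isotopic rel boundary, and that these isotopies can be inserted without disturbing the other components. The paper's induction sidesteps this entirely: the only gluing ever performed is of two 3-balls along a single 2-disk at a regular level, which is trivially a 3-ball. So the paper's route is less constructive but cleaner; yours makes the bounding ball more explicit at the cost of a heavier local verification.
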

Note that, by Scharlemann's result, all of the level sets are unlinks and at every saddle point the number of components increases as $|t|$ increases. 

\begin{proof}
The proof is by induction on the number of saddle points, $n$. The case $n=1$ follows easily from Scharlemann's result -- we may assume the sphere has two minima and one maximum and so the band move is just the connected sum of a pair of unknots. This describes an unknotted sphere.

Suppose $S$ has $n$ saddle points. It can be arranged that they occur at distinct levels. Let $t_n$ be the level of the top one. In order to increase the number of components, the band move at $t_n$ will just affect one of the components, $K$. By an isotopy, it can be arranged that the maxima capping off all the other components of the unlink here occur at level $t'<t_n$.

Choose some $t$ such that $t'<t<t_n$. The cross-section $S_t$ gives an unknot so there is a 2-disk ${D}$ at this level. Surgery along $D$ gives spheres $S'$ and $S''$. The Morse function of $S$ induces Morse function on these sphere with $2$ and $n-1$ saddle points respectively.
By induction, both are unknotted so bound 3-cells $D'$ and $D''$ respectively. These give $\overline{D}=D' \cup_{{D}} D''$, a 3-cell bound by $S$.
\end{proof}

\begin{Cor}\label{dscor}
Suppose $L$ is a link in $S^3$ and there are two sets of band moves $\{A_i\}_{1\leq i\leq k}$ and $\{B_j\}_{1\leq j\leq l}$ such that performing the moves
\begin{itemize}
\item $\{A_i\}_{1\leq i\leq k} \cup \{B_j\}_{1\leq j\leq l}$ gives an unknot;
\item $\{A_i\}_{1\leq i\leq k} \cup \{B_j\}_{j \leq n}$ gives an ${l-n+1}$-component unlink $(0 \leq n \leq l)$ and
\item$\{A_i\}_{i\leq n} \cup \{B_j\}_{1\leq j< l}$ gives an ${k-n+1}$-component unlink $(0 \leq n \leq k)$.
\end{itemize}

Then $L$ is doubly slice. In addition, a link obtained by performing any subset of this entire collection of band moves is doubly slice.
\end{Cor}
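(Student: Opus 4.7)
The plan is to construct an unknotted smooth $2$-sphere $S\subset S^4$ containing $L$ as a cross-section; this makes $L$ doubly slice by definition. I specify $S$ by giving a Morse movie of its cross-sections under a height function $r$. Place $L$ at some level $t_L$. Below $L$, install $k$ saddle points realising the bands $A_1,\ldots,A_k$, so that by condition~(2) with $n=0$ the cross-section immediately below all $k$ saddles is the $(l+1)$-component unlink; cap this off with $l+1$ minima at the bottom of $S$. Symmetrically, above $L$ install $l$ saddles realising $B_1,\ldots,B_l$, giving by condition~(3) with $n=0$ the $(k+1)$-component unlink, capped off with $k+1$ maxima. An Euler characteristic count $(l+1)+(k+1)-(k+l)=2$ confirms that $S$ is topologically $S^2$, with $L$ appearing as the cross-section between the two saddle blocks.

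To show that $S$ is unknotted I invoke the preceding Proposition. Reparametrise $r$ so that the unknot cross-section produced by condition~(1), obtained by applying all bands to $L$, sits at level $0$, with all maxima above and all minima below. For the ``every cross-section is a completely split link'' hypothesis I use conditions~(2) and~(3): as $n$ varies, condition~(2) lists the unlinks $(l{+}1)$-, $l$-, $\ldots$, $1$-component appearing between consecutive $B$-saddles once all $A$'s are in place, and condition~(3) lists analogous $(k{+}1)$-, $k$-, $\ldots$, $1$-component unlinks appearing between consecutive $A$-saddles once all $B$'s are in place. These two sequences meet at the unknot produced by condition~(1), so once the saddles are properly ordered, every intermediate cross-section of $S$ is a completely split link and the Proposition yields unknottedness of $S$.

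The main obstacle is the bookkeeping involved in realising both unlink sequences of conditions~(2) and~(3) as the cross-sections of a single Morse function on $S^2$ while still having $L$ itself appear as a cross-section at the correct intermediate level. This is a handle-reordering exercise whose solvability is guaranteed by the Euler characteristic computation and by the compatibility of the two hypothesis sequences at the common unknot. The second assertion of the corollary follows from the same construction applied to any subset $T\subseteq\{A_i\}\cup\{B_j\}$ of the bands: the bands omitted from $T$ are absorbed into trivial $0$--$1$ or $1$--$2$ cancelling handle pairs in the movie, producing a new unknotted sphere whose cross-section at the $L$-level is precisely the link obtained from $L$ by performing only the band moves in $T$, making that link doubly slice.
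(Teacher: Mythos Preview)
Your main argument follows the same strategy as the paper's: build a $2$-sphere from $L$ using the given bands as saddles, verify unknottedness via the preceding Proposition, and observe that $L$ occurs as a cross-section. The paper simply presents this in the opposite order --- it places the unknot (all bands applied) at the central level, undoes the $A$-bands above and the $B$-bands below to realise the unlink sequences of conditions~(2) and~(3), applies the Proposition directly, and then remarks that since the bands are disjoint they may be reordered, so $L$ (and indeed any link obtained from a subset of the bands) appears as a cross-section of the \emph{same} sphere.

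There are two points where your write-up needs repair. First, ``reparametrise $r$'' is not what is required: reparametrising the height function does not change which links occur as cross-sections. What you actually need is to \emph{reorder the saddles}, i.e.\ choose a different Morse function on the same embedded sphere. You gesture at this with ``once the saddles are properly ordered'', but the appeal to an Euler characteristic computation is beside the point --- the reason reordering is possible is simply that the bands are disjoint, so the corresponding saddles commute. Note also that there is no need to produce a single Morse function in which both $L$ and the unknot appear simultaneously; one ordering verifies unknottedness, another exhibits $L$.

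Second, your argument for the final assertion is wrong. Absorbing the omitted bands into cancelling $0$--$1$ or $1$--$2$ handle pairs changes the sphere, and nothing then guarantees that the new sphere is unknotted (the hypotheses of the Proposition need not hold for it). The correct and much simpler argument is the paper's: for any subset $T$ of the bands, reorder the saddles of the \emph{same} unknotted sphere $S$ so that exactly the bands in $T$ lie on one side of some level; the link obtained from $L$ by performing $T$ is then the cross-section at that level, hence doubly slice.
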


%

\begin{proof}
The above proposition can be applied to show that these band moves describe an unknotted sphere. Take the unknot obtained by using all of the bands as the central level set and undo the $A$ bands in order above it to get unlinks in the level sets above. Doing the same with the $B$ bands below gives an unknotted sphere. Changing the order of the band moves simply takes a different cross-section of the same sphere so the result follows.
\end{proof}


We will use this result to produce families of doubly slice links.

\begin{Prop}\label{lan}
Let $L_{a,n}$ be the link in Figure \ref{pan}. It is doubly slice for any $a,n \in \mathbb{Z}$.

\end{Prop}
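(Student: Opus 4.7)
The plan is to apply Corollary \ref{dscor} to an explicit choice of band moves on $L_{a,n}$. Given the two-parameter structure of $L_{a,n}$ and the way doubly slice links arise from symmetric cross-sections of unknotted spheres, the natural strategy is to take $\{A_i\}$ to be bands that resolve the twist regions from one side of the diagram in Figure \ref{pan} and $\{B_j\}$ to be a symmetric family resolving them from the other side. The parameter $a$ should enter only through the local behavior of a twist tangle, while $n$ controls the number of bands in each family.

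First I would mark bands directly on the diagram: roughly one $A$-band per twist region acting from ``above'' and one $B$-band per twist region acting from ``below'', with the total count determined so that $\{A_i\}\cup\{B_j\}$ takes the link, after a sequence of Reidemeister moves, to an unknot. Verifying this unknotting is a direct isotopy check on the diagram. Then I would verify the two partial-unlink conditions of Corollary \ref{dscor}: performing all of $\{A_i\}$ together with only the first $n$ of the $\{B_j\}$ should yield an $(l-n+1)$-component unlink, and symmetrically for the other half. In practice, once all the $A$-bands have acted, the remaining diagram should simplify to a standard split position in which each subsequent $B$-band simply cuts off one more trivial unknotted component; the same picture, rotated, handles the partial-$A$ condition.

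The main obstacle is the bookkeeping required to show that each partial collection of bands yields a \emph{genuine} unlink with the prescribed number of components, rather than a merely split link or an unlink of the wrong size. This is cleanest to carry out by induction, say on $n$: the base case reduces to a small explicit diagram that can be drawn and verified by hand, and the inductive step adds one twist region together with its symmetric pair of bands, whose effect on the diagram (and on the component count) can be tracked locally. Once the terminal unknot condition and both partial-unlink conditions are confirmed, Corollary \ref{dscor} immediately yields that $L_{a,n}$ is doubly slice.
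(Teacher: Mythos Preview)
Your proposal rests on a misreading of the link $L_{a,n}$: the integer $n$ is not a count of twist regions or of bands, but simply the number of crossings in one of the five boxes in Figure~\ref{pan}. There are no families $\{A_i\}_{1\le i\le k}$ and $\{B_j\}_{1\le j\le l}$ whose size grows with $n$; the figure shows exactly one band $A$ and one band $B$ (the third band $C$ is irrelevant to this proposition), so the case $k=l=1$ of Corollary~\ref{dscor} is what is being invoked. Consequently your proposed induction on $n$ has no structural meaning here, and the ``one band per twist region'' strategy is addressing a different (and nonexistent) link.

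The paper's actual argument is much shorter than what you sketch. One checks directly that band move $A$ alone on $L_{a,n}$ cancels the $a$ and $-a$ twists in two adjacent strands and then allows the remaining strands to be removed, yielding a $2$-component unlink; band move $B$ does the symmetric thing; and doing both gives an unknot. These three verifications are exactly the three hypotheses of Corollary~\ref{dscor} in the case $k=l=1$, and no induction or component-count bookkeeping is required. The parameters $a$ and $n$ play no role beyond the local cancellation of crossings.
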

\begin{figure}[htbp] 
\begin{center}
\ifpic
\small
\def\svgwidth{10cm}
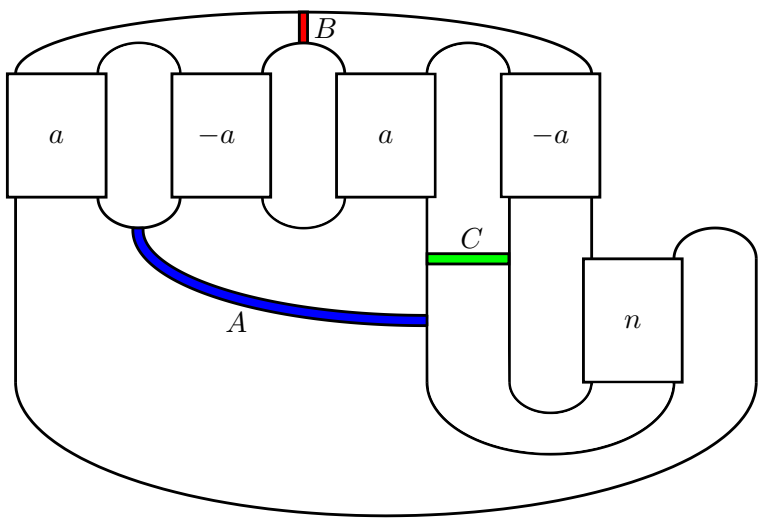
\else \vskip 5cm \fi
\begin{narrow}{0.3in}{0.3in}
\caption{
\bf{Band moves on $L_{a,n}$.}}
\label{pan}
\end{narrow}
\end{center}
\end{figure}

\begin{proof}
We ignore band $C$ for the moment.

After performing band move $A$, the crossings in $-a$ and $a$ twists in the second and third strands can be cancelled in pairs. The first and fourth strands may then also be removed so this gives a 2-component unlink.

Band move $B$ has a similar effect and also gives a 2-component unlink. Applying both band moves gives an unknot so we may apply Corollary \ref{dscor}.

\end{proof}

\begin{Cor}The pretzel links $P(a,-a,a)$, $P(a,-a,a,-a\pm1)$ and $P(a,-a,a,-a)$ are all doubly slice for any $a \in\mathbb{Z}$.
\end{Cor}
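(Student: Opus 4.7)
The plan is to derive the corollary from Proposition \ref{lan} by exploiting the second clause of Corollary \ref{dscor}, which states that any link obtained from $L_{a,n}$ by performing a subset of the full band collection on Figure \ref{pan} is doubly slice. The proof of Proposition \ref{lan} explicitly ignored band $C$ and used only bands $A$ and $B$, so the first step is to verify that the augmented collection $\{A, B, C\}$ still satisfies the hypotheses of Corollary \ref{dscor}: performing all three bands should yield an unknot, and performing any proper subfamily should yield an unlink of the correct number of components. This should follow from the same twist-cancellation and strand-deletion moves used in the proof of Proposition \ref{lan}, applied now to the region involving band $C$ and the $n$-twist tangle.

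Once this is done, each of the three pretzel families is recovered as an explicit sublink of $L_{a,n}$ (or of its image under specific band moves in $\{A,B,C\}$) for an appropriate value of $n$. Specifically, $P(a,-a,a,-a)$ should appear as $L_{a,n}$ for the value of $n$ making the $n$-region trivial, with no band moves performed; $P(a,-a,a,-a\pm 1)$ should appear by either performing band $C$ on $L_{a,n}$ or equivalently by shifting $n$ by one so that band $C$ contributes a single extra crossing in one of the pretzel tangles; and $P(a,-a,a)$ should appear when the parameter $n$ (or the action of band $C$) trivialises one of the four tangles, collapsing the $4$-strand pretzel diagram to the $3$-strand pretzel with twist parameters $a,-a,a$. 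In each case the resulting link lies in the doubly slice family guaranteed by Corollary \ref{dscor}, giving the conclusion.

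The main obstacle is the diagrammatic bookkeeping needed to simultaneously verify the hypotheses of Corollary \ref{dscor} for the three-band collection and to match each target pretzel with a sublink of the resulting doubly slice family. Once band $C$ is shown to interact cleanly with the $n$-twist region in the way dictated by the symmetry of Figure \ref{pan}, both verifications should reduce to elementary Reidemeister and tangle manipulations of the same flavour as those already carried out in the proof of Proposition \ref{lan}, and no new essential difficulty is expected beyond careful diagram-chasing.
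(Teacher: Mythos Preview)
Your overall strategy is the paper's, but you have the key identifications inverted. In the paper $L_{a,0}=P(a,-a,a)$ and $L_{a,\pm1}=P(a,-a,a,-a\pm1)$: setting $n=0$ in Figure~\ref{pan} collapses the diagram to the \emph{three}-strand pretzel, not the four-strand one. Consequently the first two families in the statement are already handled directly by Proposition~\ref{lan} using only bands $A$ and $B$; band $C$ plays no role for them. It is the remaining family $P(a,-a,a,-a)$ that is \emph{not} of the form $L_{a,n}$ for any $n$, and which instead arises as the cross-section obtained by performing band $C$ alone. For that case the paper does exactly what you propose---augment the two-saddle unknotted sphere of Proposition~\ref{lan} by the extra saddle $C$---and the single new verification required, beyond what was already checked in Proposition~\ref{lan}, is that bands $B$ and $C$ together give a $3$-component unlink.

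So your plan works once the roles of $P(a,-a,a)$ and $P(a,-a,a,-a)$ are exchanged, and it is then shorter than you anticipated: two of the three families need no new argument at all. With your current identifications the proof would stall, since performing $C$ on $L_{a,n}$ produces $P(a,-a,a,-a)$ rather than the three-strand pretzel you were hoping to recover that way.
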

\begin{proof}The first two of these families are of the form $L_{a,n}$ when $n=0$ or $\pm1$. The unknotted sphere in Proposition \ref{lan} can be extended using band $C$. If we do band moves $B$ and $C$ we get a 3-component unlink so the three bands describe an unknotted sphere with three saddle points. The link given by band move $C$, $P(a,-a,a,-a)$, is therefore also doubly slice.
\end{proof}
\begin{Rmk}The manifolds $(2)-(8)$ listed in Section 6 and $(37)$ in Section 5\footnote{The $\mu$ invariants of this example are miscalculated in \cite{Budney}.} of \cite{Budney} are the double branched covers of links $L_{a,n}$ for small $a$ and $n$ and therefore embed smoothly in $S^4$.
\end{Rmk}

To construct more doubly slice pretzel links, we require the following intermediate result.

\begin{Lem}\label{torus}
Let $K$ be a $(2,2k+1)$-torus knot $T_{2,2k+1}$ for $k \geq 1$. Then $K \# -K$ is a cross-section of the unknotted sphere shown in Figure \ref{tor}, where the $2k$ bands are labelled as in Corollary \ref{dscor}.

A similar picture (with two bands) shows the same fact in the trivial case of $T_{2,1} \# -T_{2,1}$.

\begin{figure}[htbp] 
\begin{center}
\ifpic
\small
\def\svgwidth{10cm}
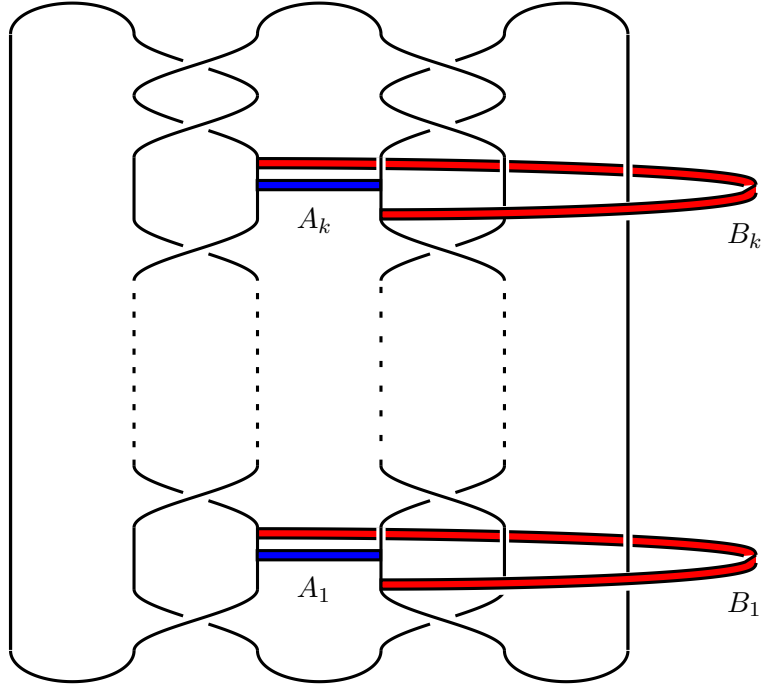
\else \vskip 5cm \fi
\begin{narrow}{0.3in}{0.3in}
\caption{
\bf{Band moves on $T_{2,2k+1} \# -T_{2,2k+1}$.}}
\label{tor}
\end{narrow}
\end{center}
\end{figure}

\end{Lem}

\begin{proof}We must verify that the bands in this picture satisfy the conditions of Corollary \ref{dscor}.
First, we claim that performing band moves $A_1$ and $B_1$ changes the sign of the crossing immediately above the pair of bands. The effect of band $A_1$ is shown in Figure \ref{toriso1} and there is an isotopy giving Figure \ref{toriso2}. Band move $B_1$ gives two pairs of canceling crossings and so transforms the knot to $T_{2,2k-1} \# -T_{2,2k-1}$. The rest of the bands are unaffected so we may continue this process with $k$ such pairs of band moves to produce the unknot.

\begin{figure}[htbp] 
\begin{center}
\ifpic
\small
\def\svgwidth{10cm}
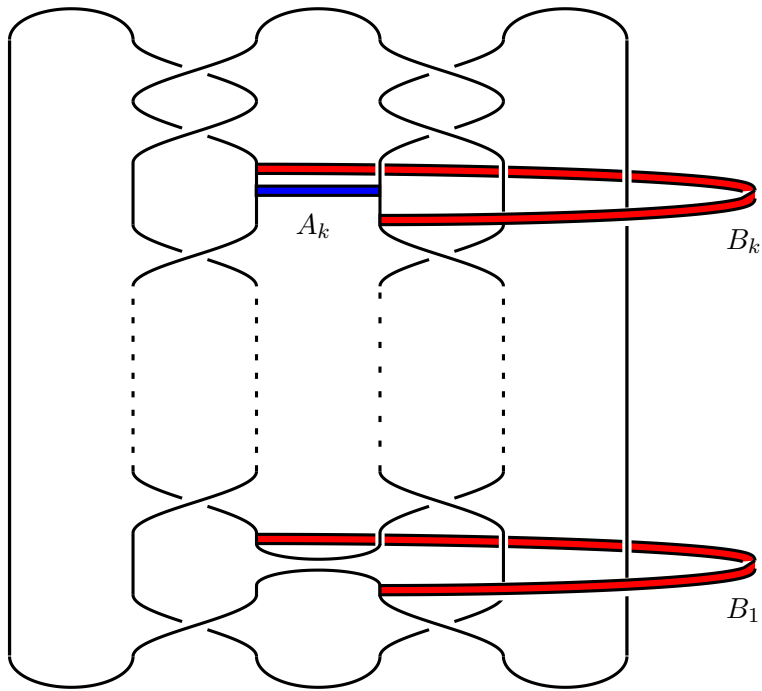
\else \vskip 5cm \fi
\begin{narrow}{0.3in}{0.3in}
\caption{
\bf{The result of band move $A_1$.}}
\label{toriso1}
\end{narrow}
\end{center}
\end{figure}

\begin{figure}[htbp] 
\begin{center}
\ifpic
\small
\def\svgwidth{10cm}
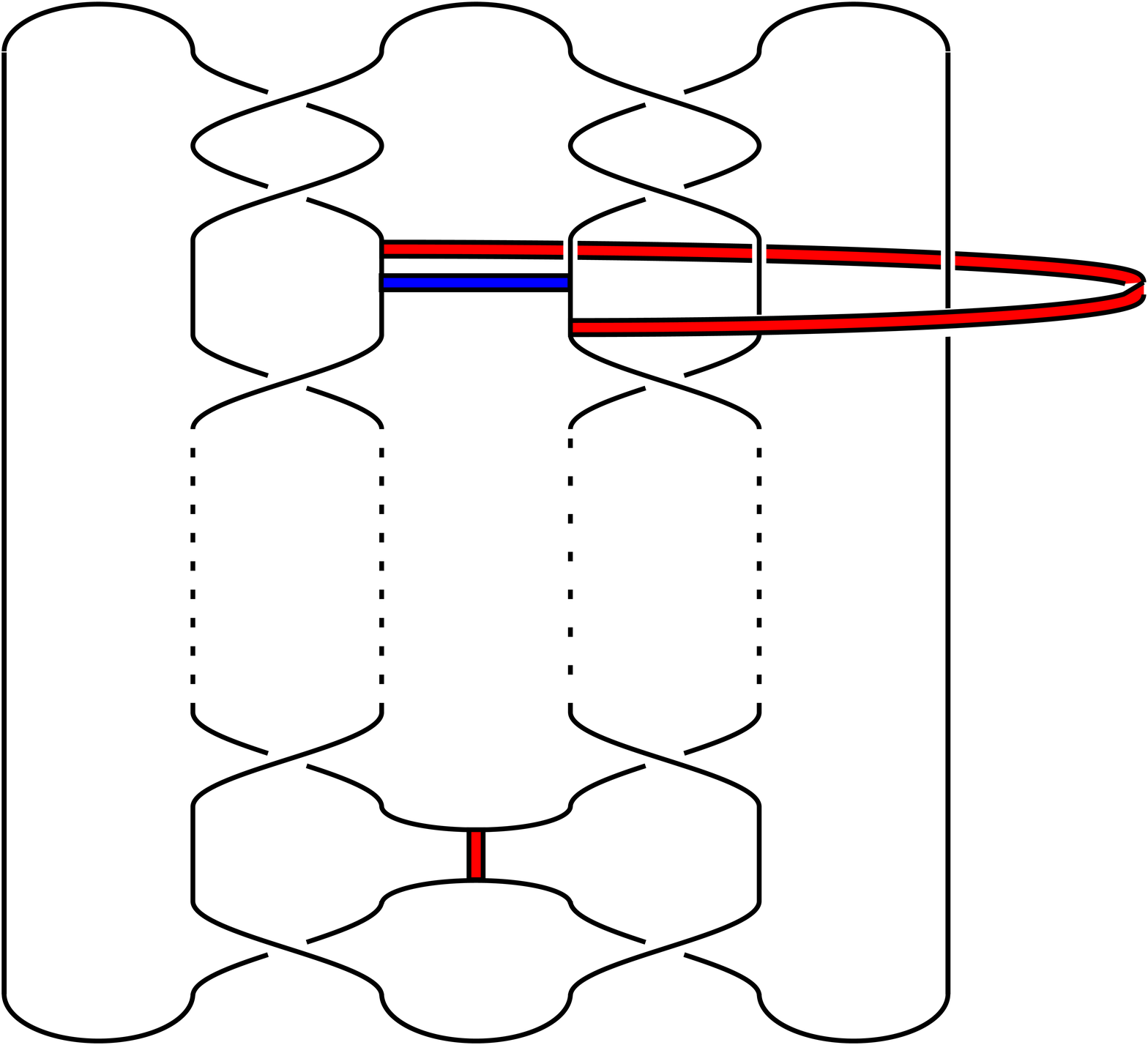
\else \vskip 5cm \fi
\begin{narrow}{0.3in}{0.3in}
\caption{
\bf{Isotopy simplifying band $B_1$ .}}
\label{toriso2}
\end{narrow}
\end{center}
\end{figure}

Now suppose we do all of the $A$ band moves and $B_1, \ldots B_n$ for some $n < k$. We begin by noting that when $i \leq n$ each pair $(A_i,B_i)$ cuts down the number of crossings, as before. It is therefore enough to show that applying the $k$ band moves $A_1, \ldots A_k$ to the diagram for $T_{2,2k+1} \# -T_{2,2k+1}$ gives a $k+1$ component unlink.

The band move $A_1$ gives a 2-component unlink as can  be seen in Figure \ref{toriso1} -- all of the crossings can be cancelled in pairs. Immediately after performing each subsequent band move, a further unlinked component can be removed.

There is an isotopy of Figure \ref{tor} which moves each band $B_i$ into the position that $A_i$ is drawn in. This can be seen by rotating the second factor in the connected sum anti-clockwise by $2 \pi$ through an axis passing through the band of the connected sum. This symmetry establishes that the above argument also works with each $A_i$ replaced by $B_i$, and so verifies the remaining condition in Corollary \ref{dscor}.
\end{proof}

We now show that $P(a,-a,b,-b)$ is doubly slice when $b$ is odd. There are two cases to consider.

\begin{Prop}\label{oddeven}
The link $P(a,-a,b,-b)$ is doubly slice when $a$ is even and $b$ is odd.
\end{Prop}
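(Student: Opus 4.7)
The plan is to exhibit an explicit collection of bands on a diagram of $P(a,-a,b,-b)$ satisfying the hypotheses of Corollary \ref{dscor}, thereby displaying the link as a cross-section of an unknotted sphere. Write $b=2k+1$; without loss of generality $b>0$. The bands will come in two groups. The first group consists of a pair $(A,B)$ situated on the $(a,-a)$ side of the pretzel, modeled on the bands $A$, $B$ of Figure \ref{pan}: because $a$ is even, performing just $A$ (or just $B$) causes the crossings in the $a$ and $-a$ twist regions to cancel in pairs, exactly as in the proof of Proposition \ref{lan}, leaving behind the $(b,-b)$-portion of the pretzel as a standard diagram of $T_{2,b}\#(-T_{2,b})$ together with a split unknotted component. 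The second group consists of $2k$ bands placed in the $(b,-b)$ portion of the diagram, which after the reduction above become precisely the bands $A_1,\ldots,A_k,B_1,\ldots,B_k$ on $T_{2,b}\#(-T_{2,b})$ supplied by Lemma \ref{torus}.

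To verify the three bulleted conditions of Corollary \ref{dscor}, I would check the following. Performing only the $A$-type bands (the single $A$ together with $A_1,\ldots,A_k$) first reduces the $(a,-a)$ side and then, by Lemma \ref{torus} applied to the resulting $T_{2,b}\#(-T_{2,b})$, produces a $(k+2)$-component unlink. The symmetric statement holds for the $B$-type bands. Doing \emph{all} $2k+2$ bands yields an unknot, since the outer $(a,-a)$ reduction leaves us with $T_{2,b}\#(-T_{2,b})$ plus a split unknot, and then the full set $\{A_i,B_i\}$ unknots the torus-knot summand by Lemma \ref{torus}. The intermediate partial sums required by Corollary \ref{dscor} reduce, after performing the outer band, to the partial band sums already handled by Lemma \ref{torus} on $T_{2,b}\#(-T_{2,b})$, with the number of components adjusted by the presence or absence of the split unknot.

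The main obstacle is the bookkeeping of the explicit diagram: one must place the outer bands $A,B$ and the torus-knot bands $A_i,B_i$ in disjoint regions of a pretzel projection, and check that isotopies used to identify the post-band picture with $T_{2,b}\#(-T_{2,b})$ do not interfere with the remaining bands. The parity hypothesis is used in an essential way: $a$ even is what allows the full $(a,-a)$ tangle to collapse under a single band as in $L_{a,n}$, and $b$ odd is what makes $T_{2,b}$ a knot, so that Lemma \ref{torus} applies to $T_{2,b}\#(-T_{2,b})$. If either parity is reversed, the cancellations fail and a different construction (such as the companion case treated in the following proposition) is required.
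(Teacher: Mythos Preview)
Your approach is genuinely different from the paper's, and unfortunately it contains a fatal counting error.

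With $a$ even and $b$ odd, the pretzel $P(a,-a,b,-b)$ has exactly two components: one small component running through the inner strands of the $a$ and $-a$ tangles, and one large component passing through both strands of the $b$ and $-b$ tangles together with the outer strands of the $a,-a$ tangles. Since any band move changes the number of components by exactly $\pm 1$, a single band on $P(a,-a,b,-b)$ can never produce a two--component link. Your claim that performing just $A$ (or just $B$) leaves $T_{2,b}\#(-T_{2,b})$ together with a split unknot is therefore impossible. The same problem recurs when you assert that performing all $2k+2$ bands yields an unknot: by your own description the end state is an unknot (from the torus--knot summand) together with a split unknot, which is two components, not one, so the first hypothesis of Corollary \ref{dscor} fails.

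Your explanation of where the parity hypotheses enter is also off. In Proposition \ref{lan} the cancellation of the $a$ and $-a$ twist boxes after band $A$ works for every $a$, not just even $a$; parity plays no role there. In fact a band placed between the first two tangles of $P(a,-a,b,-b)$ does collapse to a torus--knot connected sum, but the result is $T_{2,|b-a|}\#(-T_{2,|b-a|})$, not $T_{2,b}\#(-T_{2,b})$: the $a$ and $-a$ twists do not simply vanish but combine with the $b$ and $-b$ twists.

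This is essentially what the paper does. It uses a \emph{single} band $C$ which turns $P(a,-a,b,-b)$ into $T_{2,|b-a|}\#(-T_{2,|b-a|})$; the parity hypothesis (one of $a,b$ even, the other odd) is used precisely to guarantee that $|b-a|$ is odd, so that this is a knot and Lemma \ref{torus} applies. The band $C$ is then appended as $B_0$ to the $|b-a|-1$ bands supplied by Lemma \ref{torus}, and one checks the single remaining unlink condition (all $A_i$'s without $C$) directly.
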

\begin{proof}
Figure \ref{oddeven1} shows that there is a band move using a band $C$ on $P(a,-a,b,-b)$ which gives $T_{2,|b-a|} \# -T_{2,|b-a|}$. Since $b-a$ is odd, Lemma \ref{torus} gives band moves on this knot satisfying Corollary \ref{dscor}. We can extend this picture by adding the band move $C$ and interpreting it as $B_{0}$.

\begin{figure}[htbp] 
\begin{center}
\ifpic
\small
\def\svgwidth{10cm}
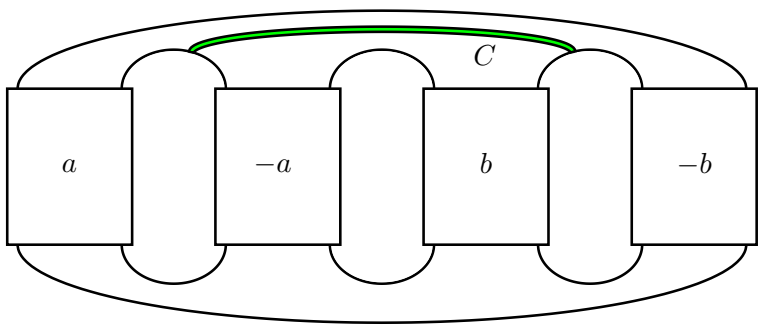
\else \vskip 5cm \fi
\begin{narrow}{0.3in}{0.3in}
\caption{
\bf{A band move on $P(a,-a,b,-b)$ .}}
\label{oddeven1}
\end{narrow}
\end{center}
\end{figure}
We claim that this picture also satisfies the conditions of Corollary \ref{dscor}. All but one of the cross-sections which need to be considered are obtained by applying a set of band moves including $C$ and so are described by Lemma \ref{torus}. Therefore the only thing that remains to be checked is that applying all of the band moves $A_i$ without $C$ gives an unlink with one more component that the one obtained by including $C$.

This is exhibited by Figure \ref{oddeven2}, with $2k+1 = |b-a|$.

\begin{figure}[htbp] 
\begin{center}
\ifpic
\small
\def\svgwidth{10cm}
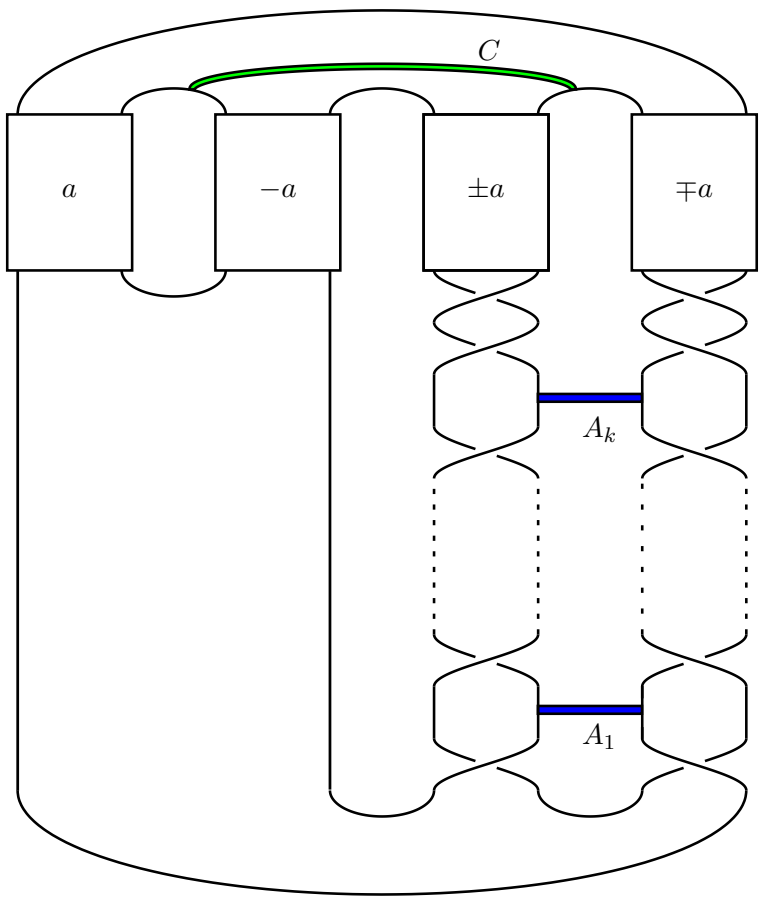
\else \vskip 5cm \fi
\begin{narrow}{0.3in}{0.3in}
\caption{
\bf{Bands $A_i$ and $C$.}}
\label{oddeven2}
\end{narrow}
\end{center}
\end{figure}
\end{proof}

\begin{Prop}
The link $P(a,-a,b,-b)$ is doubly slice when $a$ and $b$ are both odd.
\end{Prop}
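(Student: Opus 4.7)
The strategy parallels Proposition \ref{oddeven}, but with a modification forced by parity. In that proof, a single band move $C$ converted $P(a,-a,b,-b)$ into $T_{2,|b-a|} \# -T_{2,|b-a|}$, and then the bands from Lemma \ref{torus} completed the unknotted sphere. The argument worked because $|b-a|$ was odd, so this torus link was in fact a knot. With both $a$ and $b$ odd, $|b-a|$ is even, and the same band move now produces a two-component torus link rather than a knot; in particular Lemma \ref{torus} does not apply directly.

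To compensate, the plan is to introduce a second auxiliary band $C'$ placed symmetrically to $C$ across the pretzel diagram, so that performing $C$ and $C'$ together reduces $P(a,-a,b,-b)$ to a knot of the form $T_{2,2k+1} \# -T_{2,2k+1}$ for an appropriate $k$. Once in this form, Lemma \ref{torus} supplies bands $A_1,\ldots,A_k$ and $B_1,\ldots,B_k$ realising it as a cross-section of an unknotted sphere. To apply Corollary \ref{dscor}, one partitions the full band collection into two halves, for example $\{A_1,\ldots,A_k,C\}$ and $\{B_1,\ldots,B_k,C'\}$, and checks that each allowable partial application yields an unlink with the correct number of components. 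The intermediate diagrams that arise will be analogues of Figures \ref{oddeven1} and \ref{oddeven2}, and the cancellations simplifying them are of the same pair-of-crossings type used in the even-odd case.

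The main obstacle is locating the correct position for the band $C'$: it must (i) together with $C$ produce a knot $T_{2,2k+1}\#-T_{2,2k+1}$ rather than a link, and (ii) interact cleanly with the $A_i$ and $B_i$ so that every intermediate unlink condition of Corollary \ref{dscor} is satisfied. Once such a $C'$ is identified (guided by the symmetry of $P(a,-a,b,-b)$ under the involution swapping the two pairs of tangles), the remaining verification is a routine, if lengthy, case analysis of the diagrams obtained by performing each admissible subset of bands.
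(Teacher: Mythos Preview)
Your proposal is a plan rather than a proof: you explicitly leave the location of the second band $C'$ unspecified, and with it the verification that $C$ and $C'$ together yield a knot of the form $T_{2,2k+1}\#-T_{2,2k+1}$ and that all the intermediate unlink conditions of Corollary \ref{dscor} hold. Without an explicit $C'$ there is nothing to check, and there is no a priori reason such a $C'$ with all the required properties exists; the symmetry you invoke does not by itself guarantee that the result of $C\cup C'$ is a torus-knot connected sum rather than some other knot. So as written this is a genuine gap.

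The paper sidesteps the parity problem differently and more directly. Rather than trying to salvage the band $C$ of the even--odd case, it uses a single new band $D$ (placed at the bottom of the diagram, between the $-a$ and $b$ strands) which converts $P(a,-a,b,-b)$ into the four-fold sum $T_{2,a}\#-T_{2,a}\#T_{2,b}\#-T_{2,b}$. Since $a$ and $b$ are each odd, Lemma \ref{torus} applies separately to each pair, and the connected sum of two unknotted spheres is unknotted; this supplies bands $A_1,\ldots,A_{k+l}$ and $B_1,\ldots,B_{k+l}$ with $a=2l+1$, $b=2k+1$. Setting $D=B_0$, the only new verification needed is that performing all the $A_i$ without $D$ gives an unlink with one extra component, which is checked from an explicit figure. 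The point is that keeping $a$ and $b$ separate, rather than combining them into $b-a$, avoids the even-torus-link issue entirely.
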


\begin{proof}
We proceed in the same manner in Proposition \ref{oddeven}. The band $D$ in Figure \ref{oddodd} turns the link into the sum $T_{2,a} \# -T_{2,a} \# T_{2,b} \# -T_{2,b}$. We find band moves for this knot using Lemma \ref{torus}, and the fact that a connected sum of unknotted spheres is also unknotted. Let $a=2l+1$ and $b=2k+1$. We obtain the diagram shown in Figure \ref{oddodd2}.

\begin{figure}[htbp] 
\begin{center}
\ifpic
\small
\def\svgwidth{10cm}
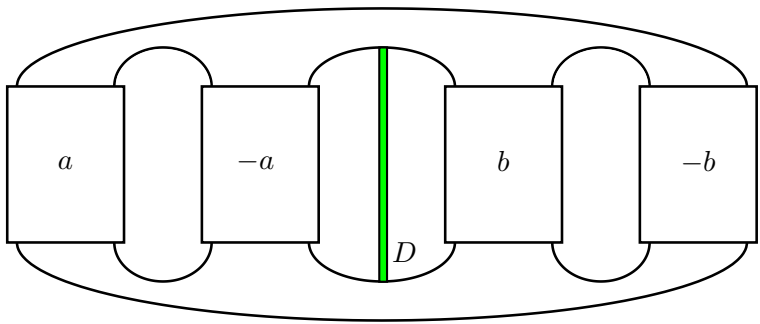
\else \vskip 5cm \fi
\begin{narrow}{0.3in}{0.3in}
\caption{
\bf{A band move on $P(a,-a,b,-b)$.}}
\label{oddodd}
\end{narrow}
\end{center}
\end{figure}

\begin{figure}[htbp] 
\begin{center}
\ifpic
\small
\def\svgwidth{10cm}
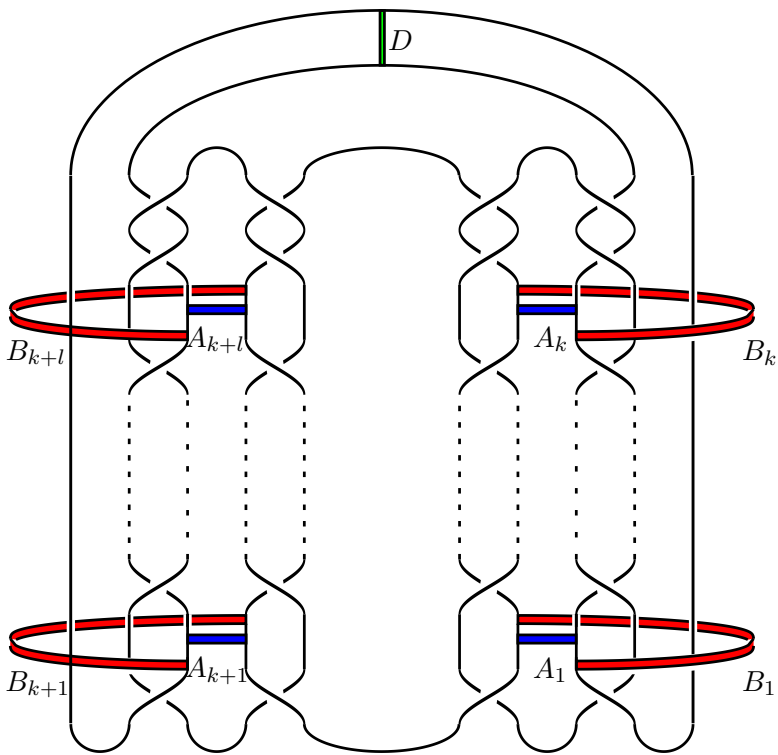
\else \vskip 5cm \fi
\begin{narrow}{0.3in}{0.3in}
\caption{
\bf{Band moves on $P(a,-a,b,-b)$.}}
\label{oddodd2}
\end{narrow}
\end{center}
\end{figure}

Setting $D=B_0$ gives the result, arguing as in Proposition \ref{oddeven} above. Figure \ref{oddodd3} just shows the bands $A_i$ and $D$. Note that after the band moves given by $A_1$ and $A_{k+1}$ all of the crossings can be removed and it is easy to see that band $D$ simply connects two components together.

\begin{figure}[htbp] 
\begin{center}
\ifpic
\small
\def\svgwidth{10cm}
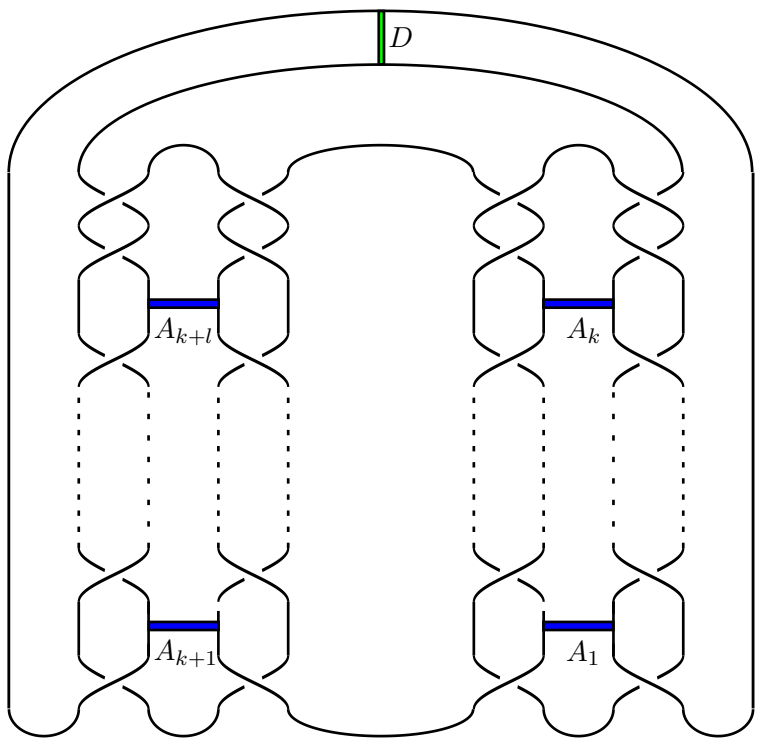
\else \vskip 5cm \fi
\begin{narrow}{0.3in}{0.3in}
\caption{
\bf{Band moves $C$ and $A_i$.}}
\label{oddodd3}
\end{narrow}
\end{center}
\end{figure}
\end{proof}


\begin{Rmk}
We have considered unoriented links but there is also a natural notion of a doubly slice oriented link as a cross-section of an oriented unknotted sphere. Every doubly slice link can be given such an orientation and, for the examples above, this orientation can be recovered from the pictures by requiring that every band respects it.

A closely related problem is to determine which (pretzel) links are smoothly doubly slice. We may use the failure of the double branched cover to embed in $S^4$ as an obstruction. However, additional complications arise -- the links $P(2,-2,3,-3)$ and $P(2,3,-2,-3)$ have the same double branched cover but only the former is doubly slice.
\end{Rmk}

Lemma \ref{dsemb} now gives all of the embeddings claimed in Theorems \ref{sumslens} and \ref{pretzelcover34}. Some of these were known already by different methods. Crisp and Hillman showed that $Y(a_1, -a_1, \ldots a_n -a_n)$ with each $a_i$ is odd and $Y(a,-a,a, \ldots , \pm a)$ always embed \cite{CH}. The manifold $Y(2,-2,3,-3)$ is known to embed by \cite{Budney}.



\subsection{Constructing embeddings using Kirby diagrams}

A Kirby diagram for $S^4$ gives a handle decomposition. Taking only some of these handles gives a 4-dimensional submanifold of $S^4$. We can find an embedding of a 3-manifold in $S^4$ from a sufficiently complicated Kirby diagram by taking the boundary of such a submanifold. Indeed, in principle, every 3-manifold which can be smoothly embedded in $S^4$ can be found in this way.

Here we will use this method to construct an embedding for another Seifert manifold.
\begin{Lem}\label{force}
Suppose $Y$ is the boundary of a Kirby diagram consisting of $4$ 2-handles. Suppose these are attached along framed knots $\gamma_i$ $(1 \leq i\leq 4)$ with the following properties:
\begin{itemize}\item The sublink given by $\gamma_1$ and $\gamma_2$ is a 0-framed unlink;
\item the sublink given by $\gamma_1$ and $\gamma_3$ is a 0-framed unlink;
\item the linking number of $\gamma_1$ and $\gamma_4$ is $\pm1$.
\end{itemize}
Then $Y$ embeds smoothly in $S^4$.
\end{Lem}

\begin{proof}
We can draw a Kirby diagram as follows. Exchange $\gamma_1$ and $\gamma_2$ for 1-handles and add 0-framed meridians to $\gamma_2$ and $\gamma_4$.

Then $\gamma_2$ and its 0-framed meridian give a canceling pair -- whenever a 2-handle crosses over $\gamma_2$ in the diagram we may change this to an undercrossing by sliding the other component over the meridian. This pair can therefore be removed.

Similarly, we can remove every crossing of $\gamma_3$ over $\gamma_4$. Since it is 0-framed and can be drawn such that it has no crossings with $\gamma_1$, we may add a canceling 3-handle.

Our diagram now consists of a 1-handle attached along $\gamma_1$, $\gamma_4$ and a 0-framed meridian of $\gamma_4$. By sliding $\gamma_4$ over this meridian, we may change any crossing of $\gamma_4$ with itself. Since the linking number of $\gamma_1$ with $\gamma_4$ is $\pm1$ we see that they give a canceling pair. After removing them, we may add a 3-handle and a 4-handle to get the standard Kirby diagram of $S^4$.

It then follows that $Y$ is the boundary of a smooth submanifold of $S^4$.
\end{proof}

We use this to describe another embedding.

\begin{Eg}
The Seifert manifold $Y(S^2;0;(4,1),(4,1),(12,-7))$ embeds smoothly in $S^4$.
\end{Eg}

We rewrite this Seifert manifold as $Y(S^2;1;(4,1),(4,1),(12,5))$. Since $12/5=[2,-3,-2]^-$ this is the boundary of the plumbing shown in Figure \ref{egpl}. We blow down the $+1$-framed curve to get the first picture in Figure \ref{early} and then perform the indicated Kirby moves.

\begin{figure}[htbp] 
\begin{center}
\ifpic
\small
\def\svgwidth{6cm}
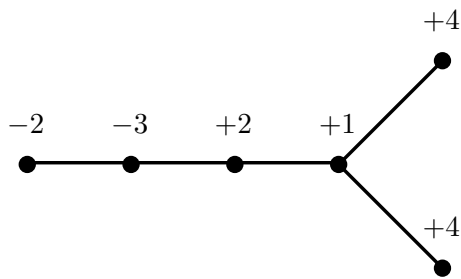
\else \vskip 5cm \fi
\begin{narrow}{0.3in}{0.3in}
\caption{
\bf{Plumbing graph for $Y(S^2;1;(4,1),(4,1),(12,5))$.}}
\label{egpl}
\end{narrow}
\end{center}
\end{figure}


\begin{figure}[htbp] 
\begin{center}
\ifpic
\def\svgwidth{6cm}
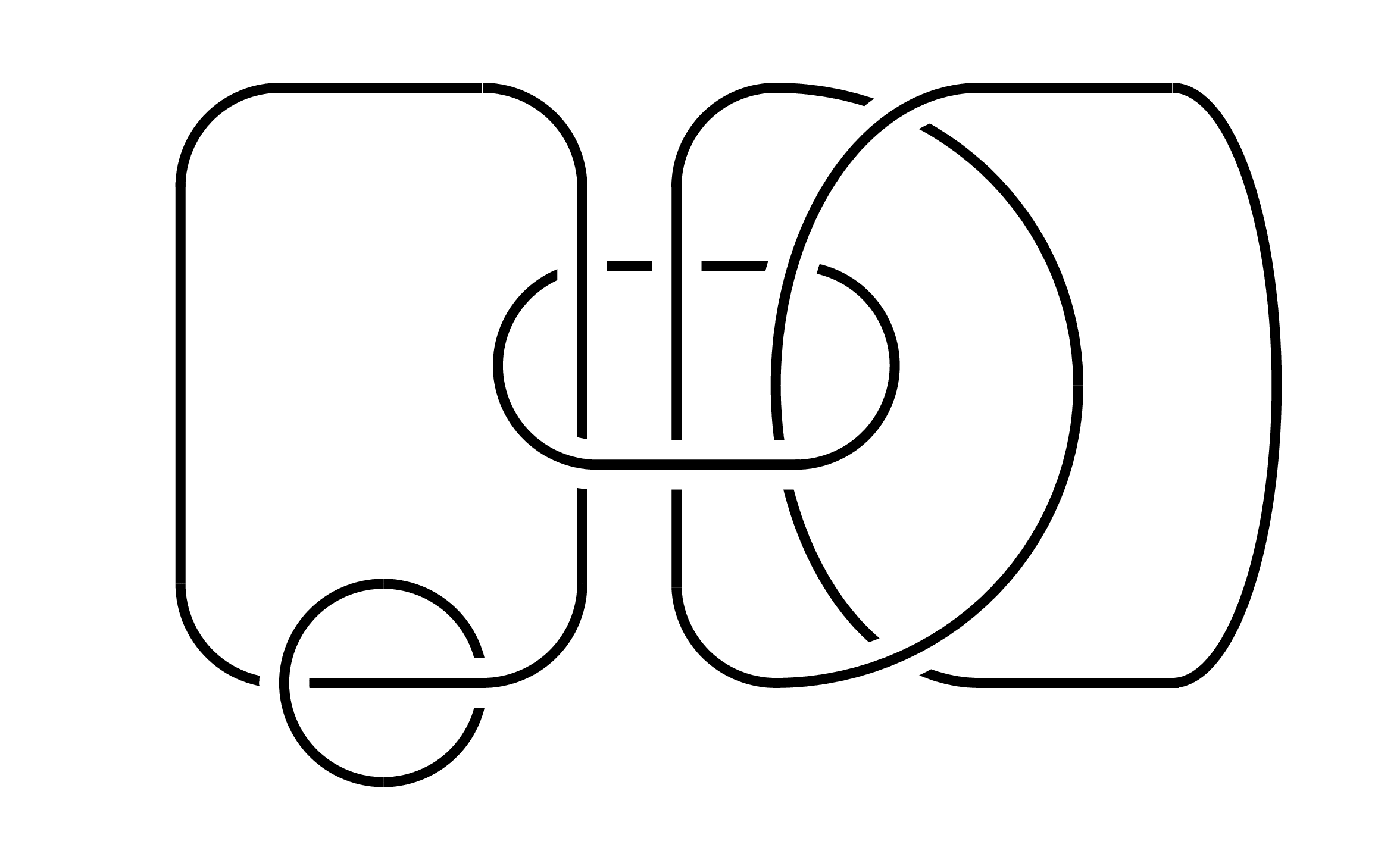
\vspace{0.4cm}

{$\downarrow$}
Handle slide.

\vspace{0.4cm}
\def\svgwidth{9cm}
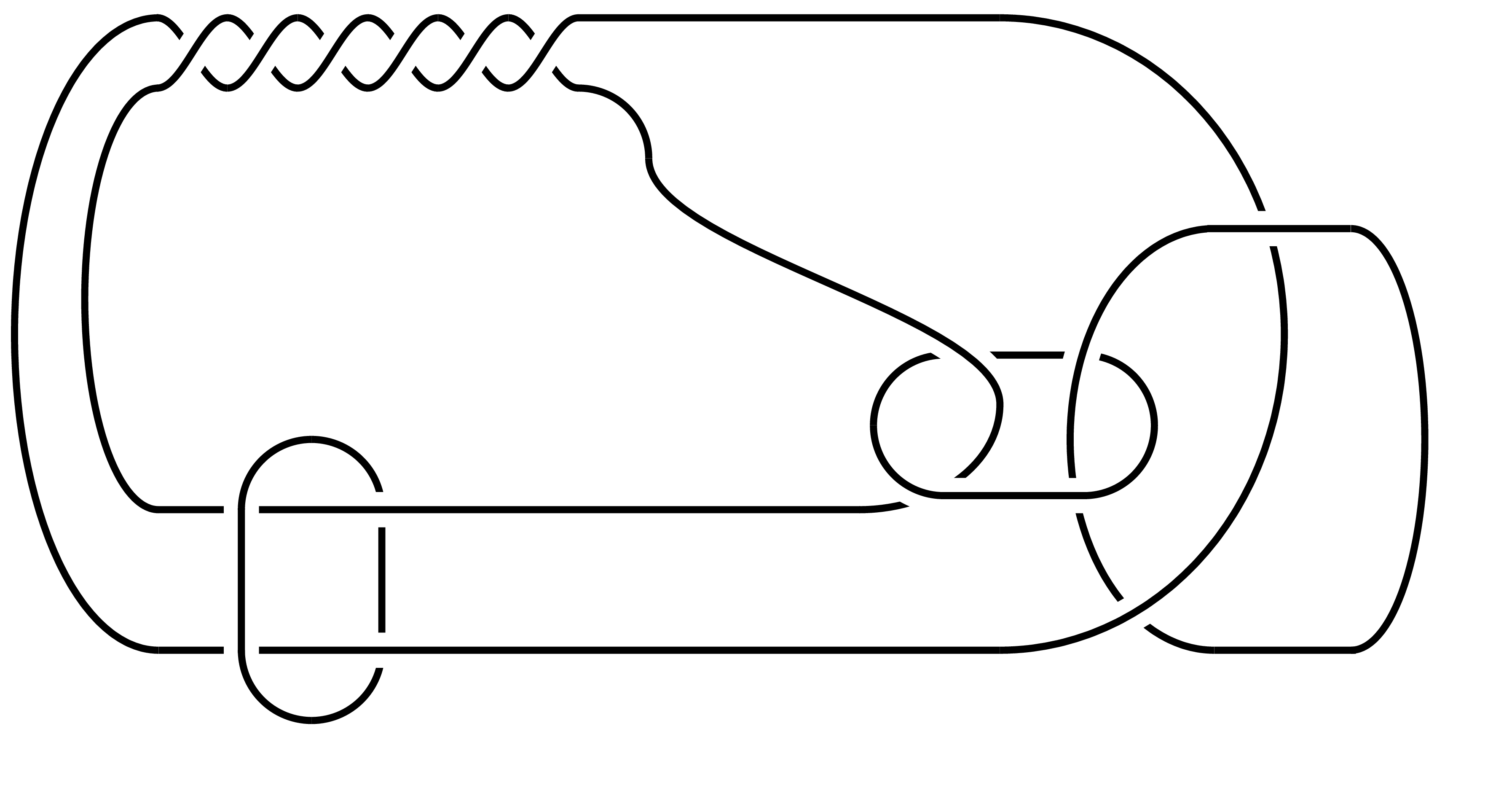
\vspace{0.5cm}

{$\downarrow$}
Blow down, handle slide.

\vspace{0.4cm}
\def\svgwidth{10cm}
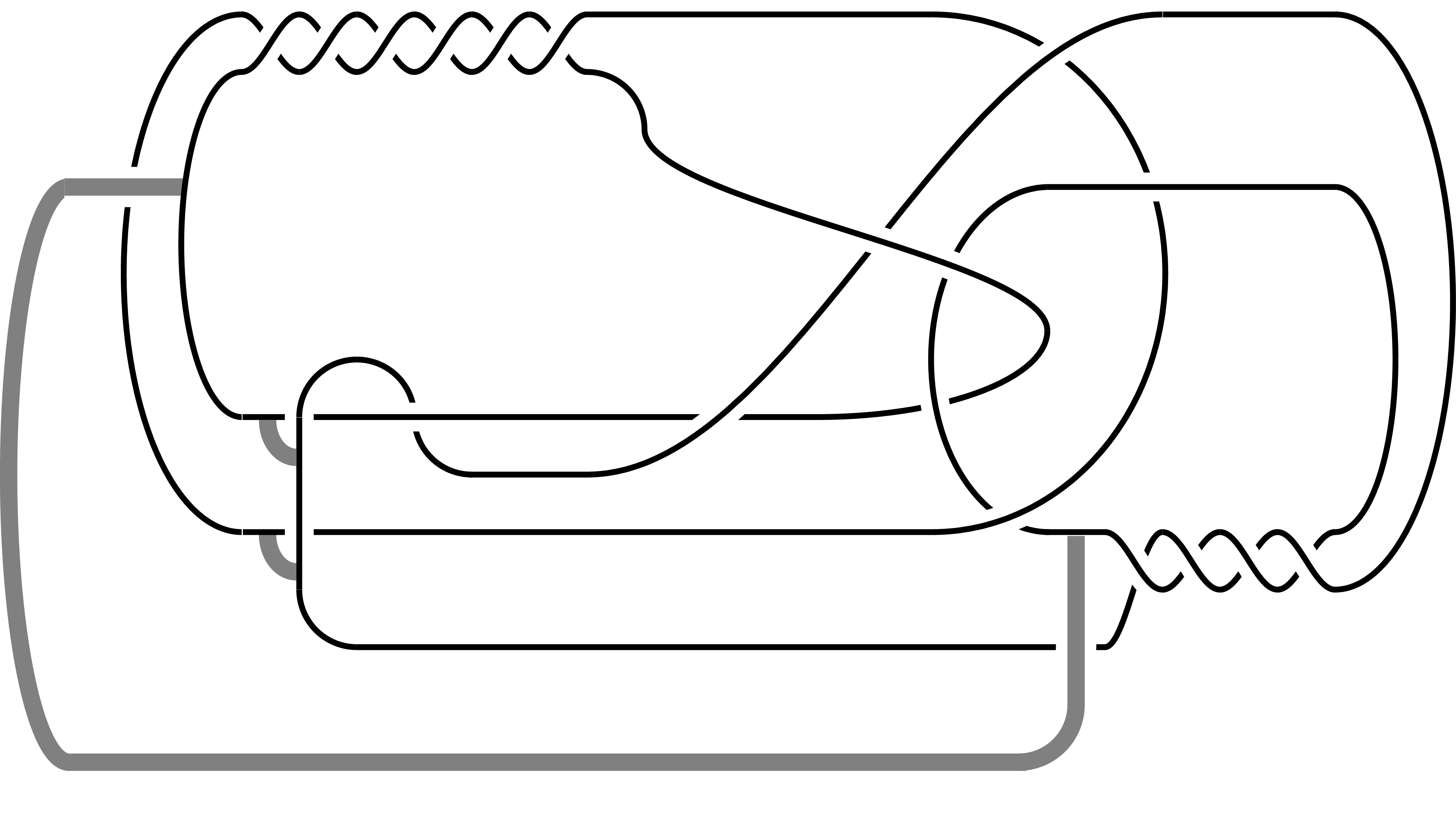

\else \vskip 5cm \fi
\begin{narrow}{0.3in}{0.3in}
\caption{
\bf{Kirby moves.}}
\label{early}
\end{narrow}
\end{center}
\end{figure}

The second diagram has a $0$-framed unknot which we think of as $\gamma_2$ to fit in with the notation of Lemma \ref{force}. The final picture is a Kirby diagram for a 4-manifold $X$ to which the lemma applies but another three handle slides are needed to draw it in the required form. The bands determining these slides are drawn. Note that there is another $0$-framed unknot which we call $\beta$ and should think of as $\gamma_1+\gamma_2$. It forms a $0$-framed unlink with $\gamma_2$.

The next handle slide uses band $A$ to slide the curve with framing $2$ over the one with framing $-4$, to get a $0$-framed curve $\gamma_3$. We then slide the $-4$ framed curve over $\beta$ using band $B$ to get $\gamma_4$ and finally use band $C$ to slide $\beta$ over $\gamma_2$. This gives a $0$-framed curve $\gamma_1$.

It is easy to check that the sublink given by $\gamma_1$ and $\gamma_2$ is a $0$-framed unlink. The linking number of $\gamma_1$ and $\gamma_4$ is a homological property of $X$ and can be computed using the intersection form of $X$. A matrix for the form can be found using the linking numbers in the final diagram in Figure \ref{early} and a simple calculation verifies that $\gamma_1$ and $\gamma_4$ have linking number $\pm 1$.

Both $\gamma_1$ and $\gamma_3$ are $0$-framed and we can see the sublink consisting of these two curves by band summing the components in the last picture of Figure \ref{early} along bands $A$ and $C$. This gives an unlink, shown in Figure \ref{unlink}, and so Lemma \ref{force} gives an embedding.

\begin{figure}[htbp] 
\begin{center}
\ifpic

\def\svgwidth{10cm}
{\scriptsize 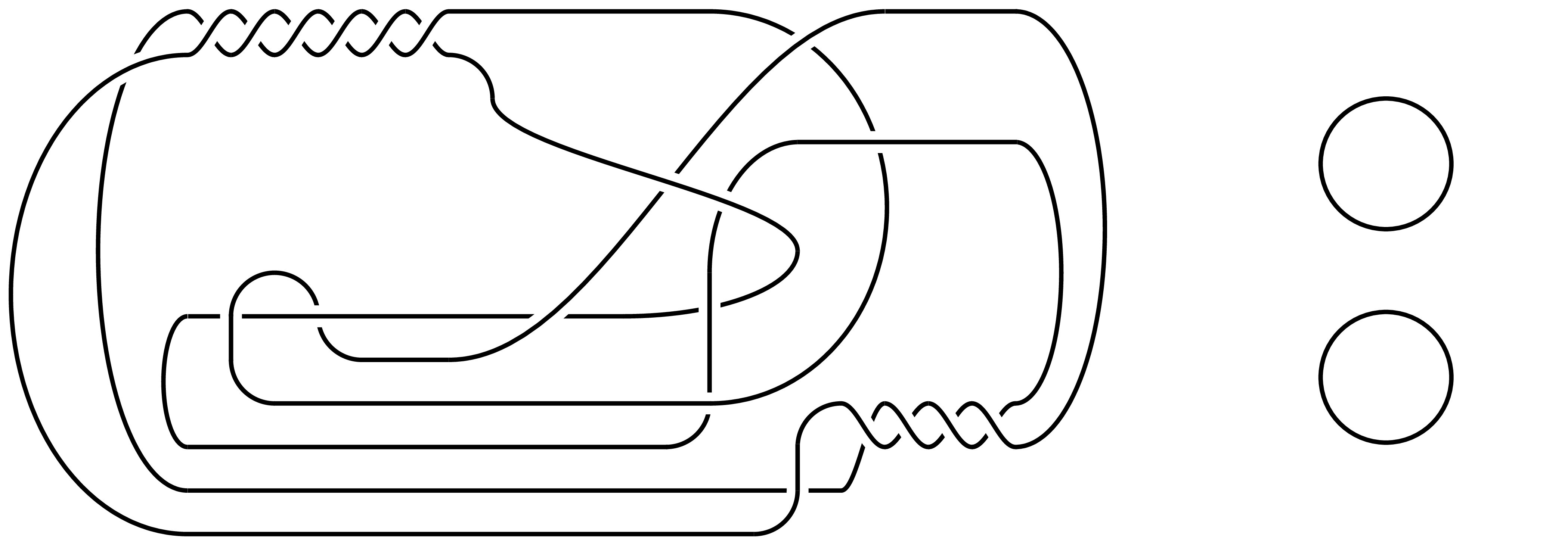}

\else \vskip 5cm \fi
\begin{narrow}{0.3in}{0.3in}
\caption{
\bf{$\gamma_1$ and $\gamma_3$ give a 2-component unlink.}}
\label{unlink}
\end{narrow}
\end{center}
\end{figure}

\newpage



\section{Diagonalisation obstruction}\label{diagonalisation}
We start by describing some purely homological properties of an embedding in $S^4$.
\begin{Lem}\label{basichom}
Suppose a 3-manifold $Y$ embeds smoothly in $S^4$. Then there is a splitting $S^4=U_1 \cup_Y -U_2$ for smooth 4-manifolds $U_i$ with boundary $Y$ such that
\begin{enumerate}
\item $H^2(Y;\mathbb{Z}) \cong H^2(U_1;\mathbb{Z})\oplus H^2(U_2;\mathbb{Z})$; \item $H^2(U_i;\mathbb{Z}) \cong H_1(U_j;\mathbb{Z})$ for $i\neq j$; \item $b_3(U_i)=0$; \item $\sigma(U_i)=0$.

\end{enumerate}

\end{Lem}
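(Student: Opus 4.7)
The plan is as follows. First I would establish the splitting. Since $Y$ is closed, orientable, and embeds in the orientable manifold $S^4$, its normal bundle is a trivial real line bundle, giving a bicollar neighborhood. Alexander duality then yields $\tilde{H}_0(S^4\setminus Y)\cong \tilde{H}^3(Y)\cong\mathbb{Z}$, so $Y$ separates $S^4$ into two pieces whose closures $U_1,U_2$ are the required smooth compact 4-manifolds with $\partial U_i=Y$ and $S^4=U_1\cup_Y -U_2$.

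The four assertions then follow from a combination of Mayer--Vietoris applied to this splitting, Poincar\'e--Lefschetz duality, and excision. For (1), the cohomology Mayer--Vietoris sequence collapses (since $H^2(S^4)=H^3(S^4)=0$) to a short exact sequence $0\to H^2(U_1)\oplus H^2(U_2)\to H^2(Y)\to 0$. For (2) I would chain together three natural isomorphisms: $H^2(U_i)\cong H_2(U_i,Y)$ by Poincar\'e--Lefschetz duality, $H_2(U_i,Y)\cong H_2(S^4,U_j)$ by excision (for $j\neq i$), and $H_2(S^4,U_j)\cong H_1(U_j)$ from the long exact sequence of the pair $(S^4,U_j)$ together with $H_1(S^4)=H_2(S^4)=0$. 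For (3), the Mayer--Vietoris connecting homomorphism $H_4(S^4)\to H_3(Y)$ sends $[S^4]$ to $[Y]$ and is therefore an isomorphism $\mathbb{Z}\to\mathbb{Z}$; by exactness the subsequent map $H_3(Y)\to H_3(U_1)\oplus H_3(U_2)$ is zero, and combined with $H_3(S^4)=0$ this forces $H_3(U_1)\oplus H_3(U_2)=0$, so in particular $b_3(U_i)=0$.

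For (4) the argument is slightly more substantial: I would show that the intersection form on $H_2(U_i;\mathbb{Q})$ vanishes outright, which immediately gives $\sigma(U_i)=0$. The homology Mayer--Vietoris sequence gives an isomorphism $H_2(Y)\cong H_2(U_1)\oplus H_2(U_2)$, so the inclusion-induced map $H_2(\partial U_i)\to H_2(U_i)$ is surjective. Boundary-supported classes lie in the kernel of $H_2(U_i)\to H_2(U_i,\partial U_i)$, which is precisely the radical of the intersection pairing; geometrically one pushes such a cycle into a collar of the boundary to remove any intersections. Hence the pairing vanishes on all of $H_2(U_i;\mathbb{Q})$.

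The only non-trivial point is this last step for (4); everything else is standard homological bookkeeping, with the main thing to watch being orientation conventions (so that the sign in $S^4=U_1\cup_Y -U_2$ is consistent with the identifications of fundamental classes used in (3)).
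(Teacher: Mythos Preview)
Your proof is correct and, for parts (1)--(3), follows essentially the same path as the paper: Mayer--Vietoris together with Alexander/Poincar\'e--Lefschetz duality. You simply unpack Alexander duality explicitly in (2) via excision and the long exact sequence of the pair, whereas the paper just invokes it by name.

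For (4) your route is genuinely different from the paper's, and a bit cleaner. The paper argues numerically: from the long exact sequence of $(U_i,Y)$ and $b_3(U_i)=0$ it deduces $b_1(U_i)+b_2^0(U_i)=b_1(Y)$ (where $b_2^0$ denotes the nullity of the intersection form), and then combines this with the identities $b_2(U_i)=b_1(U_j)$ and $b_1(Y)=b_1(U_1)+b_1(U_2)$ coming from (1) and (2) to conclude $b_2(U_i)=b_2^0(U_i)$. You instead observe directly that the homology Mayer--Vietoris isomorphism $H_2(Y)\cong H_2(U_1)\oplus H_2(U_2)$ forces each inclusion $H_2(Y)\to H_2(U_i)$ to be surjective, so every class in $H_2(U_i)$ lies in the kernel of $H_2(U_i)\to H_2(U_i,\partial U_i)$ and hence in the radical of the pairing. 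Your argument avoids the Betti-number bookkeeping and makes the geometric reason for $\sigma(U_i)=0$ transparent; the paper's version, on the other hand, records the useful auxiliary identity $b_1(U_i)+b_2^0(U_i)=b_1(Y)$, which it exploits elsewhere (e.g.\ in Proposition~\ref{homologycount}).
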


\begin{proof}
The first three statements follow by applying the Mayer-Vietoris sequence to this decomposition of $S^4$ and Alexander duality. Note in particular that the torsion subgroup $\tau H^2(Y) \cong G\oplus G$ where $G$ is the (common) torsion subgroup of $H^2(U_1)$ and $H^2(U_2)$.
Since $b_3(U_i)=0$, it follows from the exact cohomology sequence of the pair $(U_i,Y)$ that $b_1(U_i)+b_2^0(U_i)=b_1(Y)$. This implies that $b_2(U_i)=b_2^0(U_i)$ and, in particular, that the signature is zero.
\end{proof}

We will use Donaldson's theorem about 4-manifolds with definite intersection forms to obtain an obstruction.

\begin{Thm}[Donaldson \cite{donaldson}] If $W$ is a closed, oriented, smooth 4-manifold and the intersection form $Q_W: H_2(W;\mathbb{Z}) \otimes H_2(W;\mathbb{Z}) \to \mathbb{Z}$ is negative definite then $Q_W$ is diagonalisable.
\end{Thm}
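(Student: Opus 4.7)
The plan is to follow Donaldson's original gauge-theoretic strategy. Fix an $SU(2)$-bundle $P\to W$ with $c_2(P)=1$ and a generic Riemannian metric $g$, and let $\mathcal{M}$ denote the moduli space of anti-self-dual connections on $P$ modulo gauge equivalence. By the Atiyah--Singer index theorem applied to the deformation complex, the formal dimension of $\mathcal{M}$ is $8c_2(P)-3(1+b_+(W))=5$, using $b_+(W)=0$ from negative definiteness of $Q_W$.

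The first technical step is to prove that, for generic $g$, $\mathcal{M}$ is a smooth orientable $5$-manifold away from reducible connections. A connection is reducible if and only if $P$ splits as $L\oplus L^{-1}$; given $c_2(P)=1$, this forces $Q_W(c_1(L),c_1(L))=-1$, so reducibles correspond bijectively to unordered pairs $\{\pm e\}\subset H^2(W;\mathbb{Z})$ with $Q_W(e,e)=-1$. Let $r$ be the number of such pairs. A Kuranishi-model analysis shows that a neighborhood of each reducible in $\mathcal{M}$ is modelled on a cone over $\overline{\mathbb{CP}^2}$.

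The second step is to understand the ends of $\mathcal{M}$. By Uhlenbeck's compactness theorem, the only source of noncompactness is instanton concentration at a point of $W$, and Taubes' gluing theorem identifies a neighborhood of this bubble end with a collar $W\times(0,\lambda_0)$ which compactifies by attaching a copy of $W$ at $\lambda=0$. Removing open neighborhoods of the reducibles and of the bubble end yields a smooth, compact, oriented $5$-manifold $\overline{\mathcal{M}}$ whose boundary is $W\sqcup(r\cdot\overline{\mathbb{CP}^2})$, providing an oriented cobordism between $W$ and $r$ copies of $\overline{\mathbb{CP}^2}$.

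The conclusion is then a linear-algebra argument on this cobordism. Tracking how the classes of the reducibles appear in $H^2(\overline{\mathcal{M}};\mathbb{Z})$ and restrict to both boundary components, one shows the $r$ classes $\{e_i\}$ form an integral orthonormal basis of $(H^2(W;\mathbb{Z})/\mathrm{torsion},Q_W)$, forcing $r=n:=\mathrm{rank}\,H^2(W;\mathbb{Z})$ and hence $Q_W\cong\langle-1\rangle^{\oplus n}$. The main obstacle is the analytic machinery underpinning each of the geometric statements: transversality for $\mathcal{M}$, orientability, Kuranishi models at reducibles, Uhlenbeck's weak compactness, and the collar structure at the bubble end from Taubes' gluing. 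These form the bulk of Freed--Uhlenbeck and are what make the theorem genuinely deep; once they are in place the cobordism-and-counting conclusion is essentially formal.
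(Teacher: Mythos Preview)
The paper does not contain a proof of this statement at all: Donaldson's theorem is quoted as a black-box input, cited to \cite{donaldson}, and then used throughout Section~\ref{diagonalisation}. So there is nothing in the paper to compare your argument against.

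That said, what you have written is a reasonable high-level outline of Donaldson's original gauge-theoretic proof, and it is worth flagging two points where the sketch would need tightening if you actually wanted to carry it out. First, the theorem as stated in the paper makes no simple-connectivity assumption on $W$; Donaldson's 1983 argument was for simply connected $W$, and the extension to arbitrary $\pi_1$ requires additional work (handling flat connections and establishing orientability of the moduli space in the presence of nontrivial $H^1$), which your sketch does not address. Second, your final paragraph is a little loose: the conclusion does not really come from ``tracking cohomology classes through $H^2(\overline{\mathcal{M}})$'' but from cobordism invariance of the signature together with elementary lattice arithmetic. One gets $-n=\sigma(W)=\sum\pm 1$ over the $r$ cones, forcing $r\ge n$; on the other hand distinct pairs $\{\pm e_i\}$ with $Q_W(e_i,e_i)=-1$ are automatically orthogonal in a definite form, so $r\le n$, and unimodularity then promotes the $e_i$ to an integral basis. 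None of this affects the paper, which only needs the statement.
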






\begin{Prop}\label{homologycount}Let $Y$ be a 3-manifold which bounds 4-manifolds $U, X$ where $U$ is a submanifold  of $S^4$ and $H^3(X)=0$. Let $W=X \cup_Y -U$ and let $K$ be the kernel of the inclusion map $$H_1(Y;\mathbb{Z}) \to H_1(X;\mathbb{Z})$$ and denote its rank by $k$.

If the image of $K$ in $H_1(U;\mathbb{Z})$ also has rank $k$ then $b_2(W)=b_2(X)- k$ and $\sigma(W)=\sigma(X)$.
\end{Prop}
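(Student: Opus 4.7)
The plan is to apply Mayer--Vietoris to $W=X\cup_Y -U$ with rational coefficients, read off $b_2(W)$ from a careful rank count that uses Lemma \ref{basichom} to control $U$ and the hypothesis $H^3(X)=0$ to control $X$, and then invoke Novikov additivity for the signature. The signature part is essentially free: $\sigma(W)=\sigma(X)+\sigma(-U)=\sigma(X)-\sigma(U)$, and Lemma \ref{basichom}(4) gives $\sigma(U)=0$.

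For $b_2(W)$, the key observation is that the hypothesis on the image of $K$ in $H_1(U)$ is equivalent, over $\mathbb{Q}$, to the statement that $K\cap K'=0$, where $K'=\ker(H_1(Y)\to H_1(U))$. This says that the Mayer--Vietoris map $f_1:H_1(Y)\to H_1(X)\oplus H_1(U)$ has full rank $b_1(Y)$. Knowing $\mathrm{rank}\,f_1$ lets me compute $b_1(W)$ (since $W$ is connected, the surjection $H_1(X)\oplus H_1(U)\twoheadrightarrow H_1(W)$ has kernel equal to the image of $f_1$), and then Poincar\'e duality on the closed oriented $W$ gives $b_3(W)=b_1(W)$. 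Combined with $b_3(X)=0$ (UCT applied to $H^3(X)=0$) and $b_3(U)=0$ (Lemma \ref{basichom}(3)), this pins down $\dim\ker f_2$, and hence $\mathrm{rank}\,f_2$; the middle segment of Mayer--Vietoris then produces a formula for $b_2(W)$ in terms of the Betti numbers of $X$, $U$, and $Y$.

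The remaining task is to collapse this formula to $b_2(X)-k$. On the $X$ side, the long exact sequence of $(X,Y)$ together with $H_1(X,Y)\cong H^3(X)=0$ (Poincar\'e--Lefschetz duality) shows that $H_1(Y)\to H_1(X)$ is surjective rationally, so $b_1(X)=b_1(Y)-k$. On the $U$ side, letting $V$ denote the closure of the complement of $U$ in $S^4$, Lemma \ref{basichom}(1)--(2) give $b_2(U)=b_1(V)$ and $b_1(Y)=b_1(U)+b_1(V)$, so $b_2(U)=b_1(Y)-b_1(U)$. Substituting these two identities eliminates all $b_1$ terms except the desired $-k$.

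The only real obstacle here is bookkeeping: no deep theorem is used beyond Mayer--Vietoris, Poincar\'e--Lefschetz duality, UCT, and the information about $U$ already packaged in Lemma \ref{basichom}.
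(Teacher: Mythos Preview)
Your proof is correct and follows essentially the same approach as the paper: both use Mayer--Vietoris to see that the hypothesis forces $f_1$ to be injective rationally (the paper phrases this as ``the three first homology terms give a short exact sequence''), compute $b_1(W)=b_1(X)+b_1(U)-b_1(Y)$, and then reduce using the identities $b_1(U)+b_2(U)=b_1(Y)$ and $b_1(X)=b_1(Y)-k$. The only cosmetic difference is that the paper passes from $b_1(W)$ to $b_2(W)$ via the Euler characteristic $\chi(W)=\chi(X)+\chi(U)$, whereas you run the Mayer--Vietoris sequence through degree~3 and invoke Poincar\'e duality $b_3(W)=b_1(W)$ directly; these are equivalent bookkeeping devices and lead to the same intermediate formula.
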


\begin{proof}
We may calculate $b_1(W)$ using the Mayer-Vietoris sequence. The condition on $K$ implies that the three first homology terms give a short exact sequence and so $b_1(W)=b_1(X)+b_1(U)-b_1(Y)$.

Computing the Euler characteristic of $W$ gives an expression for $b_2(W)$ which may be reduced to the claimed form using the equations $b_1(U)+b_2(U)=b_1(Y)$ and $b_1(X)-b_1(Y) =-k$. These follow from Lemma \ref{basichom} and the condition on $X$.

The signatures of $W$ and $X$ are equal as $\sigma(U)=0$ by Lemma \ref{basichom}.
\end{proof}

If $X$ is chosen so that $b_2(X)-k = -\sigma(X)$ then $W$ is negative definite and Donaldson's theorem applies to show that the intersection form of $W$ is diagonal. Let $\{e_i\}$ be a basis for $H_2(W)/\text{Torsion}$ such that $e_i \cdot e_j = -\delta_{ij}$. Next we consider the induced map $\iota_* : H_2(X) \to H_2(W)$.

We may choose a basis $\{h_1, \ldots h_n\}$ of $H_2(X) \cong \mathbb{Z}^{n}$. Let $Q_X$ denote the matrix of the intersection pairing with respect to this basis.

Following \cite{lisca} we can use these to define a `subset'.
\begin{Defn}
Let $v_i = \iota_*(h_i)\in H_2(W)/\text{Torsion}$ for each $1 \leq i \leq n$. We call $S=\{v_1, \ldots v_n\}$ the subset associated to the pair $(X,U)$.

The matrix $A(S)=[e_i \cdot v_j]$ is called the matrix of $S$.
\end{Defn}
Clearly, $S$ and $A(S)$ just give different ways of recording the same information. We will switch between the two freely whenever it is convenient. Note that for the bases $\{h_i\}$ and $\{e_j\}$ the map $\iota_*$ is represented by the matrix $A(S)^t$.

An important feature of the subset $S$ is that it encodes information about the manifold $X$ and the image of the torsion subgroup $\tau H^2(U)$ of $H^2(U)$ in $H^2(Y)$.



\begin{Lem}\label{factor}Let $W=X \cup_Y -U$ where $U$ is a smooth 4-dimensional submanifold of $S^4$ and suppose $W$ is negative definite. Choose a basis for $H_2(X)$ and let $S$ be the associated subset.

The matrix $A(S)$ is such that $Q_X=-A(S)A(S)^t$.
\end{Lem}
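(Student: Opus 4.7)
The plan is to realise $-Q_X$ as the Gram matrix of the vectors $v_i = \iota_\ast(h_i)$ inside the orthogonal lattice of $H_2(W)/\text{Torsion}$ provided by Donaldson's theorem. The key input is naturality of the intersection form under the codimension-zero inclusion $\iota\colon X\hookrightarrow W$, and after that the argument is pure linear algebra.

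First I would verify that for any $\alpha,\beta\in H_2(X)$,
\[ \alpha\cdot_X\beta \;=\; \iota_\ast(\alpha)\cdot_W\iota_\ast(\beta). \]
Because $X$ sits inside $W$ as a codimension-zero submanifold, classes in $H_2(X)$ may be represented by closed, smoothly embedded surfaces in $\operatorname{int}(X)$, placed in general position already inside $X$. The signed count of their intersection points simultaneously computes $(Q_X)_{ij}$ (defined via $H_2(X)\to H_2(X,\partial X)\cong H^2(X)$) and $Q_W(v_i,v_j)$. Equivalently this is the commutativity of cup and cap products with $\iota_\ast$.

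Next, since $W$ is closed, smooth, oriented and negative definite, Donaldson's theorem supplies a basis $\{e_k\}$ of $H_2(W)/\text{Torsion}$ with $e_k\cdot e_l=-\delta_{kl}$. As the form is unimodular on this free quotient, each $v_j$ admits a unique expansion in the $e$-basis, and pairing with $e_k$ gives coordinate $-(e_k\cdot v_j)=-A(S)_{jk}$, with rows of $A(S)$ indexed by the $v_j$ (matching the paper's convention that $\iota_\ast$ is represented by $A(S)^t$). Substituting $v_j=-\sum_k A(S)_{jk}\,e_k$ into the identity above yields
\[ (Q_X)_{ij} \;=\; v_i\cdot v_j \;=\; \sum_{k,l} A(S)_{ik}A(S)_{jl}(e_k\cdot e_l) \;=\; -\sum_k A(S)_{ik}A(S)_{jk}, \]
which is the $(i,j)$-entry of $-A(S)A(S)^t$, as required.

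The main obstacle is the first step: the intersection form on a $4$-manifold with boundary is defined through Poincaré--Lefschetz duality, and one must be careful that pairings computed inside $X$ genuinely agree with those computed inside $W$. Once this naturality is established---via transverse surface representatives in $\operatorname{int}(X)$, or equivalently via the compatibility of the restriction $H^2(W)\to H^2(X)$ with Poincaré duality---the rest is a short calculation in a negative diagonal lattice, as above.
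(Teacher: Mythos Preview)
Your proposal is correct and follows essentially the same approach as the paper: represent classes in $H_2(X)$ by transverse surfaces in the interior of $X$, observe that their signed intersection numbers compute both $Q_X$ and $Q_W$, and then expand the $v_i$ in the diagonal basis $\{e_k\}$ to obtain $Q_X=-A(S)A(S)^t$. Your write-up is somewhat more explicit about the linear-algebra step and about the Poincar\'e--Lefschetz subtlety, but the argument is the same.
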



\begin{proof}
The homology classes in $H_2(X)$ are represented by embedded surfaces and the intersection form counts the signed intersection points of these surfaces. If surfaces $\{\alpha_i\}$ represent classes in $H_2(X)$ then the same surfaces sit inside $W$ to represent the images of these homology classes under the inclusion induced map. Since the intersection points are the same, $Q_X(h_i , h_j) = -Id(v_i , v_j)$. The matrix $A(S)^t$ represents the inclusion map so $v_i = A(S)^t h_i$ and so for every pair $h_i,h_j$, $Q_X(h_i,h_j) = -A(S)A(S)^t(h_i,h_j)$.
\end{proof}

\begin{Thm}\label{mainthm}Let $U$ be a submanifold of $S^4$ and $X$ be such that $H^3(X;\mathbb{Z})=0$, $H_1(X;\mathbb{Z})$ is torsion-free and the matrix $Q_X$ is non-singular. Suppose $\partial X = -\partial {U} =Y$ and that $W=X\cup_Y U$ is negative-definite. Let $S$ be the associated subset.

There is an isomorphism between the torsion subgroup of the image of the restriction map $H^2(U) \to H^2(Y)$ and $\left(\frac{\operatorname{im} A(S)}{\operatorname{im}Q_X}\right)$ and this is facilitated by the inclusion induced map $\delta: H^2(X) \to H^2(Y)$.
\end{Thm}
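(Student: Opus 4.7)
The plan is to dualise everything so that the claim becomes a statement about $H_1(Y)$, then run a diagram chase. Poincar\'e--Lefschetz duality identifies $H^2(U) \cong H_2(U,Y)$ and $H^2(Y) \cong H_1(Y)$, turning $r : H^2(U) \to H^2(Y)$ into the connecting map $H_2(U,Y) \to H_1(Y)$, whose image is $K_U := \ker(H_1(Y) \to H_1(U))$; hence $\tau\operatorname{im}(r) \cong \tau K_U$. Similarly $\delta : H^2(X) \to H^2(Y)$ corresponds to the connecting map $\partial_X : H_2(X,Y) \to H_1(Y)$, with image $K_X := \ker(H_1(Y) \to H_1(X))$, and $Q_X$, viewed as the map $H_2(X) \to H^2(X) \cong H_2(X,Y)$, is precisely the pair-LES homomorphism, so $\ker(\partial_X) = \operatorname{im}(Q_X)$.

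Next I identify $A(S)$, up to the sign coming from $e_i \cdot e_j = -\delta_{ij}$, with the inclusion-induced map $\iota^* : H_2(W) \to H_2(X,Y)$: via Poincar\'e duality on the closed manifold $W$ together with excision $H_2(W,U) \cong H_2(X,Y)$, the matrix $A(S) = [e_i \cdot v_j]$ records the pairing dual to $\iota^*$. The square
\[
\begin{CD}
H_2(X) @>{\iota_*}>> H_2(W) \\
@V{Q_X}VV @V{\iota^*}VV \\
H_2(X,Y) @= H_2(X,Y)
\end{CD}
\]
commutes, since both legs coincide with the natural map $H_2(X) \to H_2(X,Y)$ from the pair sequence. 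This gives $\operatorname{im}(Q_X) \subseteq \operatorname{im}(A(S))$, and because $Q_X$ is non-singular the image $\operatorname{im}(Q_X)$ has full rank in $H_2(X,Y)$, so $\operatorname{im}(A(S))/\operatorname{im}(Q_X)$ is automatically torsion.

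Chasing the long exact sequence of $(W,U)$ through excision, a class $\alpha \in H_2(X,Y) \cong H_2(W,U)$ lies in $\operatorname{im}(\iota^*)$ exactly when its image in $H_1(U)$ vanishes, i.e.\ when $\partial_X(\alpha) \in K_U$. So $\partial_X$ restricts to a surjection $\operatorname{im}(A(S)) \twoheadrightarrow K_X \cap K_U$ whose kernel is $\operatorname{im}(A(S)) \cap \ker(\partial_X) = \operatorname{im}(Q_X)$, producing a $\delta$-induced isomorphism
\[
\operatorname{im}(A(S))/\operatorname{im}(Q_X) \xrightarrow{\;\cong\;} K_X \cap K_U.
\]

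Finally I show $K_X \cap K_U = \tau K_U$. Since $H_1(X)$ is torsion-free, $\tau H_1(Y)$ maps to $0$ in $H_1(X)$, so $\tau H_1(Y) \subseteq K_X$ and in particular $\tau K_U \subseteq K_X \cap K_U$. Conversely, $K_X \cap K_U$ is torsion (by the rank observation above) and lies in $K_U$, so $K_X \cap K_U \subseteq \tau K_U$. Combining, $K_X \cap K_U = \tau K_U \cong \tau\operatorname{im}(r)$ via Poincar\'e duality, which delivers the required isomorphism. The delicate step is the second paragraph: the precise alignment of the matrix $A(S)$ with the homomorphism $\iota^*$ via the Poincar\'e/Lefschetz/excision identifications, with attention to the sign conventions built into the diagonal form $-I$ on $H_2(W)/\mathrm{Torsion}$; once that correspondence is locked down, the remaining identifications are standard long-exact-sequence chases.
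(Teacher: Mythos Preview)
Your argument is correct, and it is essentially the Poincar\'e--Lefschetz dual of the paper's proof: the paper stays in cohomology, using the commutative ladder
\[
\begin{CD}
H^2(W,U) @>\alpha>> H^2(W) @>\beta>> H^2(U) \\
@V\cong VV @V\iota_2VV @V\iota_3VV \\
H^2(X,Y) @>\gamma>> H^2(X) @>\delta>> H^2(Y)
\end{CD}
\]
and identifies $\gamma$ with $Q_X$ and $\iota_2$ with $A(S)$ directly, whereas you dualise to homology and run the same exact sequences there. Your commutative square $Q_X = \iota^*\circ\iota_*$ is exactly the dual of the paper's observation that $\operatorname{im}\gamma \subseteq \operatorname{im}\iota_2$, and your surjectivity step ($\alpha\in\operatorname{im}A(S)\Leftrightarrow\partial_X(\alpha)\in K_U$) is dual to the paper's computation of $\operatorname{im}(\delta\circ\iota_2)$.

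Where the two arguments genuinely diverge is at the end. To conclude that the quotient $\operatorname{im}A(S)/\operatorname{im}Q_X$ exhausts the torsion of $\operatorname{im}\iota_3$, the paper invokes an \emph{order count}: it uses $Q_X=-A(S)A(S)^t$ together with Lemma~\ref{basichom} (Alexander duality for $U\subset S^4$) to match $|\det A(S)|$ with $\sqrt{|\operatorname{coker}Q_X|}$. You instead prove $K_X\cap K_U=\tau K_U$ by the two direct inclusions, using only that $H_1(X)$ is torsion-free and that $\operatorname{im}A(S)/\operatorname{im}Q_X$ is finite. This is cleaner and, notably, does not actually use the hypothesis that $U$ sits in $S^4$; your proof goes through for any compact $U$ with $\partial U=-Y$ once $W$ is negative definite. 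The paper notes (in the remark following the theorem) that the $S^4$ hypothesis can ``sometimes be weakened''; your argument shows it can be dropped entirely here. The sign/basis bookkeeping you flag in your second paragraph is indeed the only point requiring care, but since only the \emph{images} of $A(S)$ and $Q_X$ enter the statement, the signs are harmless.
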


\begin{proof}

We follow the approach of \cite[Proposition 2.5]{GJ}. Consider the following diagram with the maps of cohomology induced by the inclusion $(X,Y) \hookrightarrow (W,U)$:
$$\begin{CD}\label{commd}  @>>> H^2(W,U) @>\alpha>> H^2(W) @>\beta>> H^2(U) @>>> H^3(W,U) \\
@. @V\iota_1 \cong VV @V \iota_2 VV @V\iota_3VV @V\iota_4 \cong VV \\
 @>>> H^2(X,Y) @>\gamma>> H^2(X) @>\delta>> H^2(Y) @>>> H^3(X,Y) .\end{CD}$$

The rows of this diagram are exact and it is commutative since all of the maps are given by restriction.


Given the basis $\{h_i\}$ for $H_2(X)$ we may choose the dual and Poincar\'e dual bases for $H^2(X)$ and $H^2(X,Y)$. With these choices the map $\gamma$ is represented by $Q_X$. This sets up an identification of a subgroup of $H^2(Y)$ with $\operatorname{coker} Q_X$ via $\delta$. Since $\det Q_X \neq 0$, this lies in the torsion subgroup of $H^2(Y)$ and the fact that $H_1(X)$ is torsion-free shows that this gives the whole torsion subgroup.

We are interested in the image of $\iota_3$. This has a subgroup given by the image of $\iota_3 \circ \beta$. Since this is the same as the image of $\delta \circ \iota_2$ it is a finite group.

We may choose the dual basis to $\{e_i\}$ for $H^2(W)/\text{Torsion}$ so that the map $\iota_2$ is represented by the matrix $A(S)$. Note that since $H^2(X)$ is free abelian any torsion in $H^2(W)$ must map trivially.

The image of $\delta \circ \iota_2$ is therefore isomorphic to $\left(\frac{\operatorname{im} A(S)}{\operatorname{im}Q_X}\right)$.

To see that this gives the entire torsion subgroup of the image of $\iota_3$, we compare the orders. By Lemma \ref{factor}, $Q_X=-A(S)A(S)^t$ and so the order of this subgroup is $|\det A(S)|$. By Lemma \ref{basichom} the torsion image of $\iota_3$ also has order given by the square root of $|\operatorname{coker} Q_X|$.
\end{proof}


\begin{Rmk}
The assumption in Theorem \ref{mainthm} that $U$ is a submanifold of $S^4$ can sometimes be weakened. When $Y$ is a rational homology sphere and $U$ any rational ball this result is Theorem 3.5 of \cite{GJ}.
\end{Rmk}


\subsection{Definite 4-manifolds bounded by Seifert manifolds}
We will apply this result to obtain obstructions to embedding Seifert manifolds. To do this, we describe the relevant negative definite 4-manifolds.

Recall that a negative definite plumbing bounded by the lens space $L(p,q)$ can be constructed by plumbing on a linear graph with weights given by the negative continued fraction.
A similar construction works when $Y$ is a Seifert manifold with base $S^2$ and $e(Y) >0$. It may be arranged that the Seifert invariants are of the form $(a_i,b_i)$ with $a_i>-b_i>0$. A weighted graph, which yields a plumbing with boundary $Y$, can be obtained by taking a central vertex weighted by the central framing and attaching legs with weights according to the negative continued fractions of $a_i/b_i$. It is shown in \cite{NR} that this is negative definite whenever $e(Y) >0$ and negative semi-definite when $e(Y)=0$.

To get a surgery picture for a Seifert manifold with a different base surface, we modify the diagram at the central curve. Figure \ref{baset2} shows how to add an orientable handle and Figure \ref{baserp2} how to add an $\mathbb{RP}^2$ summand to the base. There are other equivalent pictures -- some are shown in \cite[Appendix]{CH}.

\begin{figure}[htbp] 
\begin{center}
\ifpic
\def\svgwidth{4cm}
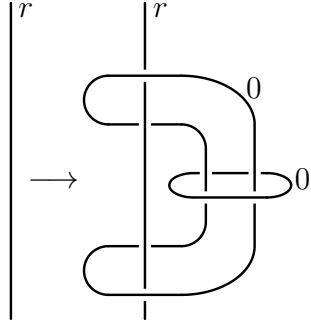
\else \vskip 5cm \fi
\begin{narrow}{0.3in}{0.3in}
\caption{
\bf{Adding an $S^1 \times S^1$ summand.}}
\label{baset2}
\end{narrow}
\end{center}
\end{figure}

\begin{figure}[htbp] 
\begin{center}
\ifpic
\def\svgwidth{4cm}
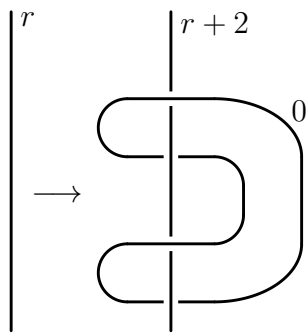
\else \vskip 5cm \fi
\begin{narrow}{0.3in}{0.3in}
\caption{
\bf{Adding an $\mathbb{RP}^2$ summand.}}
\label{baserp2}
\end{narrow}
\end{center}
\end{figure}

The construction of negative-definite manifolds with Seifert boundaries can be extended. The intersection forms depend primarily on the Seifert invariants, not the base surface.

\begin{Prop}\label{definites}
Let $Y_F=Y(F;r;(a_1,b_1), \ldots (a_n,b_n))$ where $F$ is a closed surface and $L=-\#_{i=1}^nL(a_i,b_i)$.

There are 4-manifolds $X_L$ and $X_F$ with boundaries $L$ and $Y_F$ respectively. The 4-manifolds $X_L$ and $X_{S^2}$ are obtained by plumbing and the intersection form $Q_{X_F}$ is equivalent to $Q_{X_{S^2}}$ if $F$ is orientable and to $Q_{X_L}$ otherwise.

In addition, $X_L$ can always be chosen to be negative definite and $X_{S^2}$ can be chosen to be negative definite if $e(Y_{S^2}) >0$ and semi-definite if $e(Y_{S^2})=0$.
\end{Prop}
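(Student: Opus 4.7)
The plan is to build $X_L$, $X_{S^2}$, and $X_F$ by explicit handle decompositions and to read the intersection forms off from the cellular chain complex.

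For $L=-\#_{i=1}^n L(a_i,b_i)$, take $X_L$ to be the disjoint union of linear plumbings with weights from the negative continued fraction expansions $a_i/b_i=[a_1^i,\dots,a_{n_i}^i]^-$. Since each $a_j^i\geq 2$, a standard inductive computation of leading principal minors shows each block is negative definite, hence so is $X_L$. For $Y_{S^2}$ take $X_{S^2}$ to be the star-shaped plumbing of \cite{NR}, with a central vertex of weight $-r$ attached to the first vertex of each leg. The determinant formula $\det Q_{X_{S^2}}=\pm e(Y_{S^2})\prod a_i$ of \cite{NR}, combined with negative definiteness of the legs, yields the claimed definiteness of $X_{S^2}$: negative definite when $e(Y_{S^2})>0$, with a one-dimensional radical when $e(Y_{S^2})=0$.

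For a general closed surface $F$, build $X_F$ from a handle decomposition of $F$ with one $0$-cell, $b$ $1$-cells $d_1,\dots,d_b$ (where $b=2g$ or $b=k$ in the orientable and non-orientable cases respectively), and one $2$-cell $c$ attached along the standard surface word. Thicken this to a $4$-dimensional disk bundle, with the twisting and central framing chosen so that $\partial X_F$ matches the surgery diagrams of Figures \ref{baset2} and \ref{baserp2}, and then attach the leg $2$-handles to the central handle exactly as in $X_{S^2}$. The leg $2$-handles are disjoint from the $1$-handles, so they are cycles in $C_2(X_F)$ and their pairwise intersections coincide with $Q_{X_L}$ by construction. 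The central $2$-handle has $\partial c \in C_1 = \mathbb{Z}^b$ equal to the image of the surface word in $H_1$: zero when $F$ is orientable (being a product of commutators), and $2\sum_i d_i \neq 0$ when $F$ is non-orientable with $k$ crosscaps.

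Consequently, in the orientable case $c$ contributes an additional generator to $H_2(X_F)$ with self-intersection $-r$ and with plumbing pairings to the legs identical to those in $X_{S^2}$, giving $Q_{X_F}\cong Q_{X_{S^2}}$; in the non-orientable case $c\notin \ker\partial_2$, so $H_2(X_F)$ is generated by the legs alone, giving $Q_{X_F}\cong Q_{X_L}$. Definiteness of $X_F$ then follows in each case from that of $X_{S^2}$ or $X_L$ already established, and in particular the non-orientable case gives a negative definite $X_F$ regardless of $e(Y)$. The main obstacle will be the diagrammatic verification that the twisting of the disk bundle and the framing of the central handle can be chosen so that $\partial X_F = Y_F$ with the prescribed Seifert invariants, which in particular must account for the shift $r\to r+2$ per crosscap visible in Figure \ref{baserp2}.
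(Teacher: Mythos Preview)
Your approach is essentially the same as the paper's: build $X_F$ by replacing the extra $0$-framed surgery curves in Figures~\ref{baset2} and~\ref{baserp2} with $1$-handles (equivalently, thicken a handle decomposition of $F$ and attach the legs), then read off the intersection form from the $2$-handles that survive as cycles; your cellular boundary computation ($\partial c = 0$ in the orientable case, $\partial c = 2\sum d_i$ in the non-orientable case) makes explicit what the paper states in one line. One small slip: $X_L$ should be the \emph{boundary} connected sum of the linear plumbings, not their disjoint union, so that $\partial X_L = L$ rather than a disjoint union of lens spaces---this does not affect the intersection form but matters for the stated boundary.
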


\begin{proof}
The manifolds $X_L$ and $X_{S^2}$ are described above.

We get a Kirby diagram for $X_F$ by modifying the diagram for $X_{S^2}$. We add 1-handles in place of the new $0$-framed 2-handles in Figures \ref{baset2} and \ref{baserp2}.

The intersection forms of these manifolds are easy to describe. There are two cases, depending on whether the base surface is orientable or not, but the intersection form does not depend on the genus. When the base surface $F$ is orientable, there is a canonical basis for $H_2(X;\mathbb{Z})$ given by the cores of the 2-handles. The intersection form is given by the incidence matrix of the plumbing graph, obtained by ignoring any 1-handles. This gives a manifold $X_F$ with the same intersection form as $X_{S^2}$.

When the base surface is non-orientable, the central curve does not contribute to the second homology. The intersection form is given by the other 2-handles. This is the same as the intersection form of $X_L$.
\end{proof}



We can now apply Theorem \ref{mainthm}.

\begin{Cor}\label{diag0}Let $Y$ be a connected sum of lens spaces or a Seifert manifold with orientable base orbifold and $e(Y) >0$, which embeds smoothly in $S^4$ and let $X$ be the negative definite 4-manifold with boundary $Y$ from Proposition \ref{definites}. Then there are $b_2(X) \times b_2(X)$ integer matrices $A_1,A_2$ such that $A_i A_i^t = -Q_X$ for $i=1,2$. Viewing $A_1,A_2$ and $Q_X$ as maps $\mathbb{Z}^{b_2(X)} \to \mathbb{Z}^{b_2(X)}$ let $H_i = \frac{\operatorname{im}A_i}{\operatorname{im}Q_X}$ be subgroups of $\operatorname{coker} Q_X $.

Then $\operatorname{coker} Q_X = H_1 \oplus H_2$ and $H_1 \cong H_2$.
\end{Cor}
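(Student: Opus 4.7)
The plan is to apply Donaldson's theorem to each of the two closed 4-manifolds obtained by gluing $X$ to one half of an $S^4$-splitting of $Y$, extract intersection-form data via Lemma \ref{factor}, and then translate this back to a decomposition of $\operatorname{coker} Q_X$ via Theorem \ref{mainthm}.

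First, I would invoke the embedding to write $S^4 = U_1 \cup_Y -U_2$ using Lemma \ref{basichom}, which gives $\sigma(U_i)=0$ and $\tau H^2(Y) \cong G \oplus G$ for $G$ the common torsion subgroup of $H^2(U_i)$. Then I would form $W_i = X \cup_Y -U_i$ and verify that each $W_i$ is negative definite. By Proposition \ref{homologycount} this reduces to showing that the kernel $K$ of $H_1(Y) \to H_1(X)$ has rank $k=0$: in the lens-space case $H_1(X)=0$ and $H_1(Y)$ is finite, so $k=0$ automatically; in the Seifert case $H_1(X_F) \cong \mathbb{Z}^{2g}$ (generated by the $2g$ one-handles added to account for the genus) and the genus generators of $H_1(Y)$, read off from sections of the Seifert fibration over $F$, surject onto them, so again $k=0$. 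Proposition \ref{homologycount} then gives $b_2(W_i) = b_2(X)$ and $\sigma(W_i) = -b_2(X)$, so $b_2^+(W_i) = 0$.

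Next I would apply Donaldson's theorem to $W_i$ to diagonalise $Q_{W_i}$ modulo torsion. Fixing the standard diagonal basis $\{e_j\}$ of $H_2(W_i)/\text{Torsion}$, together with the basis of $H_2(X)$ given by cores of the 2-handles of the plumbing, yields the associated subset $S_i$ with matrix $A_i := A(S_i)$, and Lemma \ref{factor} gives $Q_X = -A_i A_i^t$. Since $X$ satisfies the hypotheses of Theorem \ref{mainthm} ($H^3(X)=0$, $H_1(X)$ torsion-free, $Q_X$ non-singular in all our cases), that theorem identifies $H_i = \operatorname{im} A_i / \operatorname{im} Q_X \subseteq \operatorname{coker} Q_X$ with the image in $H^2(Y)$ of the torsion subgroup of $H^2(U_i)$, via the map $\delta$ providing the isomorphism $\operatorname{coker} Q_X \cong \tau H^2(Y)$.

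Finally I would read the decomposition $\tau H^2(Y) \cong G \oplus G$ from Lemma \ref{basichom} directly out of the Mayer--Vietoris sequence for $S^4 = U_1 \cup_Y -U_2$: the two summands are precisely the images in $H^2(Y)$ of the torsion subgroups of $H^2(U_1)$ and $H^2(U_2)$. Pulling this splitting back through $\delta$ gives $\operatorname{coker} Q_X = H_1 \oplus H_2$ with $H_1 \cong H_2 \cong G$. The most delicate step is the verification that $k=0$ in the Seifert case with positive genus, since without it $W_i$ would acquire positive $b_2^+$ and Donaldson's theorem would not be applicable.
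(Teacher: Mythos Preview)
Your proof is correct and follows the same approach as the paper: split $S^4$, apply Theorem~\ref{mainthm} to each $W_i = X \cup_Y -U_i$, and read off the decomposition from Lemma~\ref{basichom}. You actually supply more detail than the paper does, explicitly verifying via Proposition~\ref{homologycount} with $k=0$ that each $W_i$ is negative definite before invoking Donaldson's theorem, a step the paper leaves implicit.
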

\begin{proof}The embedding produces a splitting $S^4 = U_1 \cup_Y -U_2$. Applying Theorem \ref{mainthm} to $W_i=X \cup_Y -U_i$ gives the matrices $A_i$ and identifies the images of the restriction maps $\tau H^2(U_i) \to H^2(Y)$ with $\frac{\operatorname{im}A_i}{\operatorname{im}Q_X}$ using the map $\delta$.

The result can now easily be deduced from Lemma \ref{basichom} since the isomorphism $H^2(U_1) \oplus H^2(U_2) \to H^2(Y)$ is induced by the inclusions of $Y$ into each $U_i$.
\end{proof}
\begin{Rmk}\label{zhsf}
When $Y$ is an integral homology sphere then $H_1 = H_2 =\{0\}$ and it is possible to just take $A_1=A_2$.

Otherwise, this condition implies that $A_1$ and $A_2$ must be different. In particular, since $H_i$ is the subgroup of $\operatorname{coker}Q_X$ generated by the columns of $A_i$, the column spaces of the matrices must be different.
\end{Rmk}

\begin{Rmk}
This result holds for any negative definite 4-manifold $X'$ provided the inclusion map $H_1(Y;\mathbb{Q}) \to H_1(X',\mathbb{Q})$ is an isomorphism.
\end{Rmk}

\begin{Cor}\label{diag1}Let $Y$ be a Seifert manifold with orientable base orbifold and $e(Y)=0$. If $X$ is the semi-definite 4-manifold with boundary $Y$ from Proposition \ref{definites} then there is a $b_2(X) \times b_2(X)-1$ integer matrix $A$ such that $A A^t =-Q_X$.
\end{Cor}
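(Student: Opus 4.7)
The plan is to adapt the proof of Corollary \ref{diag0} to the semi-definite setting, exploiting the single null direction of $Q_X$. As in that corollary, the argument presumes that $Y$ embeds smoothly in $S^4$; the embedding then gives a splitting $S^4 = U_1 \cup_Y -U_2$ with smooth 4-manifolds $U_i$ bounded by $Y$.

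First I would identify the kernel $K$ of the inclusion-induced map $H_1(Y;\mathbb{Z}) \to H_1(X;\mathbb{Z})$. Writing $F$ for the orientable base surface of genus $g$, the Kirby diagram for $X$ produced by Proposition \ref{definites} carries exactly $2g$ one-handles, so $b_1(X) = 2g$. The hypothesis $e(Y)=0$ is precisely what forces $b_1(Y) = 2g+1$, with the extra free summand generated (modulo torsion) by the regular fibre class $c$. Since $c$ bounds in the plumbing $X$, it generates $K$ and so $K$ has rank $k=1$.

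Next I would select which side of the splitting to glue $X$ to. Mayer--Vietoris for $S^4 = U_1 \cup_Y -U_2$ (using $H_1(S^4)=H_2(S^4)=0$) gives an isomorphism $H_1(Y;\mathbb{Q}) \to H_1(U_1;\mathbb{Q}) \oplus H_1(U_2;\mathbb{Q})$, so $c$ has nonzero free image in at least one of the factors $H_1(U_i;\mathbb{Q})$. Fix such an $i$ and set $W := X \cup_Y -U_i$; the image of $K$ in $H_1(U_i)$ then has rank $1$, so Proposition \ref{homologycount} applies to yield
\[
b_2(W) = b_2(X)-1, \qquad \sigma(W) = \sigma(X) = -(b_2(X)-1),
\]
where the second equality uses that $Q_X$ is negative semi-definite of rank $b_2(X)-1$. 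Therefore $W$ is closed, smooth, and negative definite.

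Finally I would invoke Donaldson's theorem to obtain an orthonormal basis $\{e_j\}_{j=1}^{b_2(X)-1}$ of $H_2(W)/\mathrm{Tors}$ with $e_i \cdot e_j = -\delta_{ij}$, and let $A$ be the matrix of the inclusion-induced map $\iota_*\colon H_2(X) \to H_2(W)/\mathrm{Tors}$ with respect to the standard plumbing basis $\{h_i\}$ of $H_2(X)$ and this $\{e_j\}$. Then $A$ has the required $b_2(X) \times (b_2(X)-1)$ shape, and the computation $Q_X(h_i,h_j) = Q_W(\iota_* h_i, \iota_* h_j) = -(AA^t)_{ij}$ from the proof of Lemma \ref{factor} carries over verbatim, giving $AA^t = -Q_X$. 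The one technical point requiring real care is the identification of $K$: specifically that the extra free summand of $H_1(Y)$ in the $e(Y)=0$ case is generated by the regular fibre class and that this class bounds in $X$. Both are standard consequences of the plumbing presentation \cite{NR}, but they warrant explicit verification since the whole rank count $k=1$ rests on them.
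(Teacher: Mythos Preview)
Your proposal is correct and follows essentially the same route as the paper's own proof: identify the rank-one kernel $K$ of $H_1(Y)\to H_1(X)$, use the Mayer--Vietoris splitting of $H_1(Y)$ to find a side $U_i$ into which $K$ maps nontrivially, apply Proposition~\ref{homologycount} to conclude $W=X\cup_Y -U_i$ is negative definite, and then invoke Donaldson's theorem and Lemma~\ref{factor}. The paper's proof is simply terser---it asserts that $K$ has rank one without naming the fibre class, and cites Lemma~\ref{basichom} for the $H_1$ splitting rather than writing out Mayer--Vietoris---but the logical skeleton is identical, and your added detail (identifying the regular fibre as the generator of $K$ and flagging it as the point deserving care) is a genuine improvement in exposition.
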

\begin{proof}
The embedding splits $S^4$ as $U_1 \cup_Y -U_2$. The kernel $K$ of the map $H_1(Y;\mathbb{Z}) \to H_1(X;\mathbb{Z})$ has rank one. By Lemma \ref{basichom} the inclusion maps give an isomorphism $H_1(Y;\mathbb{Z}) \cong H_1(U_1;\mathbb{Z}) \oplus H_1(U_2;\mathbb{Z})$ and hence $K$ must map non-trivially into $H_1(U_i;\mathbb{Z})$ for some $i$. At least one of $X \cup_Y -U_1$ and $X \cup_Y -U_2$ is negative definite so the result then follows by applying Lemma \ref{factor}.
\end{proof}

While a Seifert manifold with a non-orientable base surface is also the boundary of a canonical negative definite manifold $X$, the first homology of $X$ is not torsion-free. However, we may modify the proof of Theorem \ref{mainthm} to recover a result slightly weaker than Corollary \ref{diag0}.

\begin{Cor}\label{diagnon}Let $Y$ be a Seifert manifold with non-orientable base orbifold $P_k$, which embeds smoothly in $S^4$ and let $X$ be the negative definite 4-manifold from Proposition \ref{definites}. Then there are $b_2(X) \times b_2(X)$ integer matrices $A_1,A_2$ such that $A_i A_i^t = -Q_X$ for $i=1,2$. Viewing $A_1,A_2$ and $Q_X$ as maps $\mathbb{Z}^{b_2(X)} \to \mathbb{Z}^{b_2(X)}$ let $H_i = \frac{\operatorname{im}A_i}{\operatorname{im}Q_X}$ be subgroups of $\operatorname{coker} Q_X $.

Then $\operatorname{coker} Q_X \cong H_i \oplus H_i$ for $i=1,2$ and $|H_1 \cap H_2| \leq 2$.
\end{Cor}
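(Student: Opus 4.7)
The plan is to adapt the arguments of Theorem~\ref{mainthm} and Corollary~\ref{diag0} to this setting; the single new feature is that $H_1(X;\mathbb{Z})$ now carries a $\mathbb{Z}/2$ torsion summand coming from the non-orientable 1-handle(s) in the Kirby diagram of $X$ (see the proof of Proposition~\ref{definites}).

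As in the orientable case, the embedding gives $S^4 = U_1 \cup_Y -U_2$; set $W_i = X \cup_Y -U_i$. Using Lemma~\ref{basichom} (so $\sigma(U_i) = 0$) together with a Mayer--Vietoris computation modelled on Proposition~\ref{homologycount}, I would first verify that each $W_i$ is negative definite with $b_2(W_i) = b_2(X)$. Donaldson's theorem then diagonalises $Q_{W_i}$, and Lemma~\ref{factor} produces $b_2(X) \times b_2(X)$ integer matrices $A_i$ with $A_i A_i^t = -Q_X$; in particular each $|H_i| = |\det A_i| = \sqrt{|\det Q_X|}$.

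For the abstract identification $\operatorname{coker} Q_X \cong H_i \oplus H_i$, I would argue purely algebraically. The linking form $\lambda([x],[y]) = x^t Q_X^{-1} y \pmod{\mathbb{Z}}$ on $\operatorname{coker} Q_X$ is non-degenerate, and using $A_i A_i^t = -Q_X$ a direct calculation gives $\lambda|_{H_i} \equiv 0$, so $H_i \subseteq H_i^{\perp}$. The order count $|H_i|^2 = |\operatorname{coker} Q_X|$ then forces $H_i = H_i^{\perp}$, so each $H_i$ is a Lagrangian subgroup, and the classification of hyperbolic linking forms on finite abelian groups yields the claimed abstract splitting $\operatorname{coker} Q_X \cong H_i \oplus H_i$.

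For the bound $|H_1 \cap H_2| \leq 2$, I would rerun the diagram chase of Theorem~\ref{mainthm}. The commutative diagram is still valid and $\iota_1, \iota_4$ are still excision isomorphisms, but $H^2(X)$ now contains a torsion summand $\operatorname{Ext}(H_1(X),\mathbb{Z}) \cong \mathbb{Z}/2$, whose image under $\delta$ is a $\mathbb{Z}/2$ subgroup of $H^2(Y)$. The identification of $\iota_3(\tau H^2(U_i))$ with $H_i$ inside $\operatorname{coker} Q_X$ is now valid only up to this $\mathbb{Z}/2$, and combining this with the orthogonal splitting $\tau H^2(Y) \cong \tau H^2(U_1) \oplus \tau H^2(U_2)$ of Lemma~\ref{basichom} bounds $|H_1 \cap H_2|$ by $2$. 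The main obstacle is this last step: one must pin down exactly how this $\mathbb{Z}/2$ enters the image of $\iota_3$ for both $i = 1, 2$ simultaneously, and verify that it cannot produce a larger 2-group intersection from an unexpected interaction between $\operatorname{Ext}(H_1(X),\mathbb{Z})$ and the embedding data.
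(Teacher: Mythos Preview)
Your setup for producing the $A_i$ and the intersection bound $|H_1\cap H_2|\le 2$ is essentially what the paper does: pass to the quotient of $H^2(X)$ by its $\mathbb{Z}/2$ torsion, identify $H_i$ with the image of $q_2\circ\iota^i_3\circ\beta^i$, and use that the images of $\iota^1_3$ and $\iota^2_3$ intersect trivially in $\tau H^2(Y)$ before quotienting by a group of order $2$.

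The genuine gap is your argument for the abstract splitting $\operatorname{coker} Q_X\cong H_i\oplus H_i$. Knowing that $H_i$ is Lagrangian for the linking form does \emph{not} imply this splitting, and it does not make the form hyperbolic. A one-dimensional example already kills it: take $Q_X=(-4)$ and $A=(2)$, so $AA^t=-Q_X$; then $\operatorname{coker} Q_X=\mathbb{Z}/4$, $H=2\mathbb{Z}/4\mathbb{Z}\cong\mathbb{Z}/2$ is Lagrangian for $\lambda(x,y)=-xy/4$, yet $\mathbb{Z}/4\not\cong\mathbb{Z}/2\oplus\mathbb{Z}/2$. So neither the Lagrangian condition nor the factorisation $Q_X=-AA^t$ alone yields the conclusion.

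The paper avoids this by using input specific to the non-orientable Seifert situation. From Proposition~\ref{definites} one has $\operatorname{coker} Q_X\cong\bigoplus_i\mathbb{Z}/a_i$, and Crisp--Hillman's computation of $\tau H^2(Y)$ (their Lemma~3.4) gives $\tau H^2(Y)$ explicitly in terms of the $a_i$. Since $Y$ embeds in $S^4$, Lemma~\ref{basichom} forces $\tau H^2(Y)\cong G\oplus G$, which pins down its $2$-primary part. Comparing the two explicit descriptions, one sees $\operatorname{coker} Q_X\cong\mathbb{Z}/2^{t_1}\oplus\mathbb{Z}/2^{t_1}\oplus K\oplus K$ while $\tau H^2(Y)\cong\mathbb{Z}/2^{t_1+1}\oplus\mathbb{Z}/2^{t_1+1}\oplus K\oplus K$; then $H_i$, being the $q_2$-image of one of the summands $\mathbb{Z}/2^{t_1+1}\oplus K$ and of square-root order, must be abstractly $\mathbb{Z}/2^{t_1}\oplus K$. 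That is where the splitting comes from, not from general linking-form theory. You will need either this Crisp--Hillman input or some equivalent explicit control of the $2$-torsion to close the gap.
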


\begin{proof}

As before $S^4= U_1 \cup_Y -U_2$ and this gives subsets with associated matrices $A_1$ and $A_2$. 
Let $t$ be the unique element of order of two in $H^2(X) \cong \mathbb{Z}^{b_2(X)} \oplus \mathbb{Z}/2$. The commutative diagram from the proof of Theorem \ref{mainthm} can be extended, with $i=1,2$.

$$\begin{CD}\label{commd2}  @>>> H^2(W_i,U_i) @>\alpha^i>> H^2(W_i) @>\beta^i>> H^2(U_i) @>>> H^3(W_i,U_i) \\
@. @V\iota^i_1 \cong VV @V \iota^i_2 VV @V\iota^i_3VV @V\iota^i_4 \cong VV \\
 @>>> H^2(X,Y) @>\gamma>> H^2(X) @>\delta>> H^2(Y) @>>> H^3(X,Y) \\
 @.@.  @Vq_1VV @ Vq_2VV @. \\
 @. @. \dfrac{H^2(X)}{\text{Torsion}} @>\delta'>> \dfrac{H^2(Y)}{\langle\delta(t)\rangle} @. \end{CD}$$

With respect to the appropriate bases, $q_1 \circ \gamma$ is represented by $Q_X$. Note that the image of this composition is the kernel of $\delta'$ so there is an isomorphism between $\operatorname{coker} Q_X$ and $\operatorname{im} \delta'$. The torsion of $H^2(W_i)$ maps trivially under $q_1 \circ \iota^i_2$ so we can identify this map with the matrix $A_i$. The group $H_i$ can now be seen as the image of $\delta' \circ q_1 \circ \iota^i_2$.

By Proposition \ref{definites}, $$\operatorname{coker} Q_X= \bigoplus_{i=1}^n \mathbb{Z}/{a_n},$$ where the $a_i$ come from the Seifert invariants of $Y$. We may order the $a_i$ by writing each as $a_i=2^{t_i} s_i$ with $s_i$ odd and arranging that $t_1 \geq t_2 \geq \ldots \geq t_n$. With this ordering, \cite[Lemma 3.4]{CH} tells us that $$\tau H^2(Y) = \left(\bigoplus_{i=3}^n \mathbb{Z}/{a_i} \right)\oplus \mathbb{Z}/{2a_1} \oplus \mathbb{Z}/{2a_2} \text{ or }\left(\bigoplus_{i=2}^n \mathbb{Z}/{a_i}\right) \oplus \mathbb{Z}/{4a_1}.$$

By Lemma \ref{basichom}, this torsion subgroup is of the form $H \oplus H$ so we may assume the former holds. Decomposing $\tau H^2(Y)$ as a direct sum of cyclic groups of prime power order we see that it is $$\mathbb{Z}/{2^{t_1+1}} \oplus \mathbb{Z}/{2^{t_1+1}} \oplus K \oplus K,$$ for some $K$ while $\operatorname{coker} Q_X$ is $$\mathbb{Z}/{2^{t_1}} \oplus \mathbb{Z}/{2^{t_1}} \oplus K \oplus K.$$

Also $H_i = \operatorname{im} q_2 \circ \iota_3^i \circ \beta^i$ is a subgroup of $q_2(\mathbb{Z}/{2^{t_1+1}} \oplus K)$.
Since this is a square root order subgroup of $\operatorname{coker} Q_X$, it follows that this cokernel is isomorphic to $H_i \oplus H_i$.

To see $H_1$ and $H_2$ have the required intersection, note that they are images of maps which factor through $\iota^1_3$ and $\iota^2_3$. The images of these maps have trivial intersection by Lemma \ref{basichom}. Since $q_2$ takes the quotient by a subgroup of order two, $H_1$ and $H_2$ have at most two points of intersection in $\dfrac{H^2(Y)}{\langle \delta(t)\rangle}$.
\end{proof}



\section{Linear subsets}\label{lins}

In this section we describe the combinatorics necessary to prove Theorems \ref{sumslens}, \ref{non-ori} and \ref{e=0}. Let $\mathbb{D}^n$ be the lattice $\mathbb{Z}^n=\langle e_1, \ldots e_n \rangle$ with respect to the product given by $-Id$.
\begin{Defn}A subset $S=\{ v_i\}$ of $\mathbb{D}^n$ is called linear if \begin{equation}\label{sumlenscondition}v_i \cdot v_j = \begin{cases} -a_i \leq -2 &\text{ if } i=j
\\ 0 \text{ or } 1 &\text{ if } |i-j|=1
\\ 0 &\text{ if } |i-j|>1.
\end{cases}\end{equation}
\end{Defn}

A weighted graph $\Gamma(S)$ can be associated to every linear subset $S$ as follows. For each element $v_i$ there is a vertex with weight $v_i \cdot v_i$ and there is an edge connecting the vertices corresponding to $v_i$ and $v_j$ if and only if $v_i \cdot v_j=1$. We will use the same notation for both the vector $v_i$ and the corresponding vertex. Define $c(S)$ to be the number of connected components of the graph $\Gamma(S)$. 

Let $Q_{\Gamma}=-A(S)A(S)^t$ be the incidence matrix of $\Gamma$. 

Define $$G(S) = \frac{\mathbb{Z}^n}{\operatorname{im}Q_{\Gamma}} \text{ and } H(S) = \frac{\mathbb{Z}^n}{\operatorname{span} S} \cong \frac{\operatorname{im}A(S)}{\operatorname{im}Q_{\Gamma}}.$$

\begin{Defn}A linear subset $S$ is called a linear double subset if $G(S) \cong H(S) \oplus H(S)$.
\end{Defn}

Linear subsets were studied extensively by Lisca \cite{lisca}, \cite{lisca2}. It will be useful to review some of these ideas.

A pair of vectors $v, v'$ are called linked if there is some unit basis vector $e_i$ in $(\mathbb{Z}^n, -Id)$ such that $v \cdot e_i$ and $v' \cdot e_i$ are both nonzero.
A subset $S$ is called irreducible if for any pair of vectors $v,v' \in S$ there is a sequence $v=w_1, \ldots , w_k=v'$ such that each $w_i$ is linked to $w_{i+1}$. In \cite{lisca2} irreducible linear subsets were called good.
\begin{Lem}
Let $S$ be a linear subset. If $S$ is not irreducible then $S= \cup_i T_i$ where each $T_i$ is irreducible and consists of $n_i$ vectors which are supported on $n_i$ of the basis vectors $\{e_j\}$.
\end{Lem}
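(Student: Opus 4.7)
The plan is to partition $S$ into its linking-equivalence classes and then use a dimension count to match the number of vectors in each class with the size of its support. The only real input beyond bookkeeping is that the vectors in a linear subset are linearly independent.

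First I would establish that the $n$ vectors in $S$ are linearly independent in $\mathbb{Z}^n$. The graph $\Gamma(S)$ is a disjoint union of paths, since $v_i$ is joined only to $v_{i-1}$ or $v_{i+1}$. Each path has all vertex weights $\leq -2$, so the standard fact about negative continued fractions shows that the incidence matrix of such a path is negative definite; consequently $Q_\Gamma = -A(S)A(S)^t$ is negative definite and, in particular, non-singular. Hence $A(S)$ is non-singular and the $v_j$ are linearly independent.

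Next, partition $S$ into its equivalence classes $T_1,\dots,T_k$ under the transitive closure of the linking relation; by hypothesis $k\geq 2$. For each $T_i$, let
\[
B_i = \{\,j\in\{1,\dots,n\} : v\cdot e_j \neq 0 \text{ for some } v\in T_i\,\}
\]
be the set of basis indices appearing in the supports of its vectors. If some $j$ lay in both $B_i$ and $B_{i'}$ with $i\neq i'$, then vectors $v\in T_i$ and $w\in T_{i'}$ would be linked via $e_j$, contradicting the fact that $T_i$ and $T_{i'}$ are distinct equivalence classes. Hence the $B_i$ are pairwise disjoint subsets of $\{1,\dots,n\}$.

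Finally, each vector in $T_i$ lies in the sublattice $\mathbb{Z}^{B_i}=\langle e_j : j\in B_i\rangle$, so the $|T_i|$ vectors of $T_i$ live in a lattice of rank $|B_i|$. Since these vectors are linearly independent (they form a subset of the linearly independent set $\{v_1,\dots,v_n\}$), we get $|T_i|\leq |B_i|$. Summing over $i$ and using disjointness of the $B_i$,
\[
n = \sum_{i=1}^k |T_i| \;\leq\; \sum_{i=1}^k |B_i| \;\leq\; n,
\]
so all inequalities are equalities and $|T_i|=|B_i|=:n_i$ for every $i$. Thus each $T_i$ is an irreducible linear subset consisting of $n_i$ vectors supported on exactly $n_i$ of the basis vectors. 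The only step that required any thought was the linear independence of the $v_j$, which is handled once at the outset; the remainder is a dimension count.
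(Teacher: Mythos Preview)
Your proof is correct and essentially reproduces the argument Lisca gives on page 2162 of \cite{lisca2}, which is all the paper itself invokes. The ingredients are the same: the negative-definiteness of $Q_\Gamma$ (from the linear chains having all weights $\leq -2$) gives linear independence of the $v_j$, the linking-equivalence classes have pairwise disjoint supports by definition, and then the dimension count $n=\sum|T_i|\le\sum|B_i|\le n$ forces equality. One small remark: you tacitly use that $|S|=n$, which is indeed the standing convention in this section (it is needed already for the definition of $G(S)=\mathbb{Z}^n/\operatorname{im}Q_\Gamma$ to make sense), so there is no gap.
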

\begin{proof}
This is proved on page 2162 of \cite{lisca2}.
\end{proof}

We now look at how to describe the groups $H(S)$ and $G(S)$ in terms of the decomposition into irreducible subsets.

For a linear subset $S$, the connected components of the graph $\Gamma(S)$ are all linear weighted trees.
If $Q_i$ denotes the incidence matrix of the $i^{th}$ tree we can arrange that $Q_{\Gamma}$ is the diagonal block matrix $\operatorname{diag}(Q_1, \ldots Q_h)$. If the subset $S$ gives matrix $A$ then this has the form $\operatorname{diag}(A_1, \ldots A_k)$ where each $A_j$ comes from an irreducible subset $T_j$.
The group $H(S)$ also splits up as a direct sum with summands of the form $$H(T_i) = \frac{\operatorname{im}A_i}{\operatorname{im}Q_{i_1} \oplus \ldots \oplus Q_{i_{c(T_i)}}}.$$

\begin{Prop}\label{irred}
Let $S$ be a linear double subset and suppose $S$ decomposes as $S= \cup_{i=1}^k T_i$ where each $T_i$ is irreducible. Then $H(T_i)$ is a square root order direct summand of $G(T_i)$ for each $i$.

In addition, if $c(T_i)=2$ then $T_i$ is also a linear double subset.
\end{Prop}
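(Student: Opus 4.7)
The plan is to exploit the block diagonal structure of $A(S)$ and $Q_\Gamma$ coming from the disjoint-support decomposition of $S$, together with an inductive purity argument applied to the short exact sequence $0 \to H(T_i) \to G(T_i) \to H(T_i) \to 0$.

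First I would observe that because the $T_i$ are each supported on disjoint sets of basis vectors, reordering the basis puts $A(S)$ into block diagonal form $\operatorname{diag}(A_1, \ldots, A_k)$, with $A_i$ the $n_i \times n_i$ matrix of $T_i$, and correspondingly $Q_\Gamma = \operatorname{diag}(Q_{\Gamma_1}, \ldots, Q_{\Gamma_k})$ with $Q_{\Gamma_i} = -A_i A_i^t$. Hence $G(S) = \bigoplus_i G(T_i)$ and $H(S) = \bigoplus_i H(T_i)$, with each $H(T_i)$ naturally a subgroup of $G(T_i)$. Taking determinants gives $|G(T_i)| = (\det A_i)^2 = |H(T_i)|^2$, and since $A_i$ and $A_i^t$ share a Smith normal form, $G(T_i)/H(T_i) \cong \mathbb{Z}^{n_i}/A_i\mathbb{Z}^{n_i} \cong \mathbb{Z}^{n_i}/A_i^t\mathbb{Z}^{n_i} \cong H(T_i)$.

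The heart of the proof is showing $H(T_i)$ is a direct summand of $G(T_i)$, which I would do by verifying purity at every level. Fix a prime $p$ and write $G_i, H_i$ for the $p$-primary parts. For any short exact sequence $0 \to A \to B \to C \to 0$ of finite $p$-groups, $\operatorname{rk}_p(B) \leq \operatorname{rk}_p(A) + \operatorname{rk}_p(C)$, with equality precisely when $A \cap pB = pA$. Applied to $0 \to H_i \to G_i \to H_i \to 0$ this gives $\operatorname{rk}_p(G_i) \leq 2\operatorname{rk}_p(H_i)$ for each $i$. Summing and using the hypothesis $G(S) \cong H(S) \oplus H(S)$, which forces $\operatorname{rk}_p(G(S)) = 2\operatorname{rk}_p(H(S))$, every inequality collapses to equality, so $H_i \cap pG_i = pH_i$ for every $i$. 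The sequence then restricts to $0 \to pH_i \to pG_i \to pH_i \to 0$, and running the same argument at this new level gives purity at level $2$. Iterating, $H_i \cap p^j G_i = p^j H_i$ for all $j \geq 1$, so $H_i$ is a direct summand of $G_i$; assembling over primes, $H(T_i)$ is a direct summand of $G(T_i)$ of the required order.

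The second assertion is then immediate: once $H(T_i)$ is a direct summand with complement isomorphic to $G(T_i)/H(T_i) \cong H(T_i)$, we have $G(T_i) \cong H(T_i) \oplus H(T_i)$, which is precisely the condition for $T_i$ to be a linear double subset. The condition $c(T_i) = 2$ in fact plays no role in the argument above; it is presumably singled out for use in subsequent applications where the internal structure of $T_i$ matters. The step I expect to be most delicate is the inductive purity argument: the global rank equality must be leveraged blockwise at every level $p^j$, and this critically relies on the symmetry $G(T_i)/H(T_i) \cong H(T_i)$ of the short exact sequence so that the rank inequality remains tight after each restriction to $p^j G_i$.
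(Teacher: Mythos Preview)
Your proof is correct and takes a genuinely different route from the paper's.

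The paper argues by a diagram chase: it places the sequence $0 \to H(T_j) \to G(T_j) \to G(T_j)/H(T_j) \to 0$ as the top row of a $3 \times 3$ diagram whose middle row is $0 \to H(S) \to G(S) \to H(S) \to 0$, asserts that this middle row splits, and then reads off a retraction $G(T_j) \to H(T_j)$ by composing the inclusion $G(T_j) \hookrightarrow G(S)$, the global retraction $G(S) \to H(S)$, and the projection $H(S) \to H(T_j)$. This gives $G(T_j) \cong H(T_j) \oplus K_j$ with $|K_j| = |H(T_j)|$; the paper then invokes $c(T_j)=2$ only to force $G(T_j)$ to be a sum of two cyclic groups, making $H(T_j)$ and $K_j$ cyclic of equal order and hence isomorphic.

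Your approach is more elementary and self-contained: you work prime by prime, use the rank inequality $\operatorname{rk}_p(G_i) \le 2\operatorname{rk}_p(H_i)$ coming from each block's short exact sequence, and let the global hypothesis $G(S) \cong H(S)^2$ collapse all inequalities to equalities, yielding purity level by level. Two things are worth noting. First, your Smith normal form observation $G(T_i)/H(T_i) \cong \mathbb{Z}^{n_i}/A_i\mathbb{Z}^{n_i} \cong \mathbb{Z}^{n_i}/A_i^t\mathbb{Z}^{n_i} \cong H(T_i)$ lets you identify the complement directly, so you correctly point out that the hypothesis $c(T_i)=2$ is not needed for the second conclusion; every $T_i$ is a linear double subset. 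Second, your argument in fact supplies the justification that the paper leaves implicit: the paper simply asserts that the middle row splits, but the definition of linear double subset only gives an \emph{abstract} isomorphism $G(S) \cong H(S) \oplus H(S)$, and it is precisely your purity argument (applied with $k=1$) that shows the concrete inclusion $H(S) \hookrightarrow G(S)$ actually splits.
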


\begin{proof}

For each $1 \leq j \leq k$, let $G'=\bigoplus_{i \neq j} G(T_i)$ and $H' =\bigoplus_{i\neq j} H(T_i)$.
Consider the following diagram
$$\begin{CD}
@. 0 @. 0 @. \\
@. @VVV @VVV @.@.\\
0 @>>> H(T_j) @ >\iota>> G(T_j) @>>> \dfrac{G(T_j)}{H(T_j)} @>>> 0 \\
@. @V\uparrow VV @V\uparrow VV @.@.\\
0 @>>> H(T_j) \oplus H' @ >\leftarrow>> G(T_j)\oplus G' @>\leftarrow>> H(T_j) \oplus H' @>>> 0 \\
@. @V\uparrow VV @V\uparrow VV @.@.\\
0 @>>> H' @ >>> G' @>>> \dfrac{G'}{H'} @>>> 0 \\
@. @VVV @VVV @.@. \\
@. 0 @. 0 @.\end{CD}.$$

It follows from the description above that this diagram commutes.
The rows and columns are exact, with the obvious inclusion and quotient maps, and the first two columns and second row split.

There is then a map $\rho$ from $G(T_j) \to H(T_j)$. It is not hard to check that this splits the first row as well.

Thus $G(T_j)\cong H(T_j) \oplus K_j$ for some $K_j$ with the same order as $H(T_j)$. If $c(T_j)=2$ then $G_j$ can be written as a sum of two cyclic groups. Both $H(T_j)$ and $K_j$ must be cyclic groups.
\end{proof}

The following special case can be observed immediately.
\begin{Cor}\label{no1s}If $S$ is a linear double subset then every irreducible $T_i$ has $c(T_i) \geq 2$.
\end{Cor}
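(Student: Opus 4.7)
The strategy is to argue by contradiction, assuming that some irreducible component $T_i$ has $c(T_i)=1$ and deriving a contradiction from Proposition \ref{irred}. The key observation is that when $c(T_i)=1$, the abelian group $G(T_i)$ is cyclic, which is incompatible with having a square root order direct summand (unless the group is trivial, which it cannot be).

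In more detail, suppose $T_i$ has $c(T_i)=1$, so that $\Gamma(T_i)$ is a single linear chain with $n_i$ vertices carrying weights $-a_1,\ldots,-a_{n_i}$ with each $a_j\geq 2$. Then the block $Q_i$ of $Q_{\Gamma}$ corresponding to $T_i$ is the tridiagonal matrix with diagonal entries $-a_j$ and off-diagonal entries $\pm 1$, so $G(T_i)=\operatorname{coker} Q_i$ is cyclic, of order $|\det Q_i|=[a_1,\ldots,a_{n_i}]^-\geq 2$. By Proposition \ref{irred}, $H(T_i)$ is a direct summand of $G(T_i)$ of order $\sqrt{|G(T_i)|}$, so there is a decomposition $G(T_i)\cong H(T_i)\oplus K_i$ with $|H(T_i)|=|K_i|$. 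But any direct sum decomposition $\mathbb{Z}/d\cong A\oplus B$ of a cyclic group forces $\gcd(|A|,|B|)=1$, so $|H(T_i)|=|K_i|$ can occur only if both summands are trivial, i.e.\ $|G(T_i)|=1$. This contradicts $|\det Q_i|\geq 2$.

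Hence no $T_i$ can have $c(T_i)=1$, which gives the claim. I do not expect any serious obstacle here; the only thing to be careful about is to verify cleanly that $G(T_i)$ really is cyclic for a linear chain with weights $\leq -2$, which follows from the standard Smith normal form computation for the tridiagonal continued-fraction matrix, or equivalently from the recursion used to identify $\det Q_i$ with the numerator of $[a_1,\ldots,a_{n_i}]^-$.
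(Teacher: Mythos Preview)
Your proof is correct and is exactly the argument the paper has in mind: the corollary is stated as an immediate consequence of Proposition~\ref{irred}, and the point is precisely that for $c(T_i)=1$ the group $G(T_i)$ is cyclic of order at least $2$ (the paper later notes explicitly, in the proof of Proposition~\ref{complementarystuff}, that such a $G(T)$ is generated by a single vector), so it cannot split off a square-root order summand. Your write-up simply makes explicit what the paper leaves as ``observed immediately.''
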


We review a few more important notions from \cite{lisca}, namely the quantity $I(S)$, contractions of subsets and bad components.
\begin{Defn}
Let $S=\{v_i\}_{i=1}^m$ be a subset of $\mathbb{D}^m$. Define $$I(S) = \sum_{i=1}^m{-v_i \cdot v_i -3}.$$
\end{Defn}
Note that $I(S)$ can be computed from weights of the graph $\Gamma(S)$. We will also use $I(C)$ when $C$ is a connected component of the graph by just summing over vectors corresponding to vertices in this component.

\begin{Defn}Let $S \subset \mathbb{D}^m$ be a subset $\{v_i\}$ for which $|v_i \cdot e_j| \leq 1$ for each $i,j$. If there are $j,s,t$ such that $|v_i \cdot e_j| =1 $ if and only if $i\in\{s,t\}$ and $v_t \cdot v_t < -2$ then the subset $S' = S \backslash \{v_s, v_t\} \cup v'_t$ of $\mathbb{D}^{n-1}$ considered as the span of $\{e_k\}_{k\neq j}$ and where $v'_t$ is obtained from $v_t$ by removing the $e_j$ component is said to be obtained via a contraction of $S$.

Conversely, $S$ is called an expansion of $S'$.
\end{Defn}

In the particular case where $v_s$ is a leaf of the graph and $v_s \cdot v_s=-2$ we will say that $S$ is an expansion of $S'$ by a final $(-2)$ vector.

\begin{Defn}
Let $S'$ be a linear subset of $\mathbb{D}^m$. Suppose that the subset $\{v_{s-1}, v_s , v_{s+1}\}$ is a connected component $C'$ of $\Gamma(S')$ and that there are $i,j$ such that $v_{s-1}$ and $v_{s+1}$ are both of the form $\pm(e_i \pm e_j)$ and $v_s \cdot v_s < -2$.

Let $S$ be any subset which is obtained from $S'$ by a sequence of expansions by final $(-2)$ vectors which belong to the connected component $C$ of $\Gamma(S)$ corresponding to $C'$.

The component $C$ is called a bad component of $S$.

We then define $b(S)$ to be the number of bad components of $S$.
\end{Defn}


Note that the conditions on $S'$ mean that, up to reordering or a change of sign, $v_{s-1}= e_i -e_j$, $v_s = e_j + \ldots$ and $v_{s+1}=-e_i-e_j$. Since every other element of $S'$ has product zero with $v_{s-1}$ and $v_{s+1}$, none contains a nonzero multiple of $e_i$ or $e_j$.

We may form a new subset $S''$ of $\mathbb{D}^{m-2}$ from $S'$ by deleting the elements $v_{s-1}$ and $v_{s+1}$ from the subset and deleting the basis vectors $e_i$ and $e_j$.

Note that the bad component $C'$ of $S'$ is necessarily given by a chain of length three with weights $-2,-n-1,-2$ for some $n\geq2$. The corresponding component $C''$ of $S''$ is simply an isolated vertex with weight $-n$.
We will call $C''$ in $S''$ the reduced component corresponding to $C$ in $S$.

We summarise the relevant features of bad components below.
\begin{Prop}\label{factsaboutBC}
Let $S$ be a linear subset with a bad component $C$. Suppose $C=\{v_1, \ldots v_s\}$ and $S \backslash C = \{w_1, \ldots w_r\}$. Then, possibly after reordering $\{e_i\}$, $w_i \cdot e_j=0$ for all $j < s$. Also, there is some $1<t<s$ such that whenever $j \geq s$ and $v_i\cdot e_j \neq 0$ then $i=t$.

Furthermore, the plumbing 4-manifold defined by the component $C$ has boundary $L(m^2n, mnk+1)$, where $-n$ is the weight on the reduced component corresponding to $C$ and $m,k$ are coprime integers with $m>k>0$. In addition, $I(C)=n-4$.
\end{Prop}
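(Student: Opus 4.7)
The plan is to argue by induction on the number $p$ of final $(-2)$ expansions used to obtain $C$ from $C'$. The base case $p=0$ is a direct computation using the explicit form of $C'$, and the inductive case tracks how each expansion updates the chain's weights, its support structure, the invariant $I$, and the lens-space parameters $(m,k)$.

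For the base case ($C = C'$), the component is a chain $\{v_1, v_2, v_3\}$ with $v_1, v_3$ of the form $\pm(e_i \pm e_j)$ for a common pair of basis vectors. After relabelling one may take $(e_i, e_j) = (e_1, e_2)$ and $s = 3$, $t = 2$. For any $w \in S \setminus C$, the equations $w \cdot v_1 = 0$ and $w \cdot v_3 = 0$ in the coefficients $(w)_1, (w)_2$ force $(w)_1 = (w)_2 = 0$, which gives the first support statement. Solving the adjacency equations $v_2 \cdot v_1 = v_2 \cdot v_3 = 1$ shows that exactly one of $(v_2)_1, (v_2)_2$ is $\pm 1$, the other is $0$, and the remaining support of $v_2$ lies in indices $\geq 3$; hence $t = 2$ is the unique such index. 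A direct continued-fraction computation gives $[2, n+1, 2]^- = 4n/(2n+1)$, so the plumbing boundary is $L(4n, 2n+1) = L(m^2 n, mnk+1)$ with $(m,k) = (2,1)$, and $I(C') = (2-3)+((n+1)-3)+(2-3) = n-4$.

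For the inductive step, one further final $(-2)$ expansion adds a new $(-2)$-leaf $v$ adjacent to some vertex $v_t$ of $C$, using a new basis vector $e_\alpha$ (introduced by the expansion) together with a previously-spare basis vector $e_\beta$, and modifies $v_t$ by $\pm e_\alpha$. After reordering so that $e_\alpha$ and $e_\beta$ are placed among the internal indices of the enlarged $C$, the support statements carry over: no vector of $S \setminus C$ used $e_\beta$ (by the condition defining the expansion, $e_\beta$ appears only in the new leaf $v$), and $e_\alpha$ is entirely new. The invariant $I$ is preserved because the new leaf contributes $(2-3) = -1$, while the modified $v_t$ changes its contribution from $a-3$ to $(a+1)-3$, a net increase of $+1$.

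The main obstacle is the plumbing identification in the inductive step. One must verify that the continued fraction $[a_1, \ldots, a_l]^- = m^2 n / (mnk+1)$ is transformed, under the effect of the expansion on the chain (appending a $(-2)$-leaf and decrementing the weight of the previous leaf), into a fraction of the same form $m'^2 n / (m' n k' + 1)$ for new coprime integers $m' > k' > 0$. This can be phrased as an explicit update rule for $(m,k)$ in terms of the matrix encoding the recursive step of the negative continued fraction algorithm, and the induction closes once both coprimality and the inequality $m' > k' > 0$ are shown to be preserved by the update.
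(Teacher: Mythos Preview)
Your overall inductive strategy matches the paper's: the base case is read off from the explicit description of $C'$ given just before the proposition, and the claim that $I(C)$ is invariant under expansion is exactly how the paper handles $I(C)=n-4$. The paper does not attempt the lens space identification directly; it simply cites \cite[Lemma~3.3]{lisca2}, so your inductive tracking of the continued fraction is extra work relative to what the paper does.

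There is, however, a genuine error in your model of the expansion. You describe the new $(-2)$-leaf as adjacent to the modified vertex $v_t$, so that the effect on the chain is ``append a $(-2)$ and decrement the adjacent endpoint.'' But the definition of contraction/expansion does not require $v_s$ and $v_t$ to be adjacent, and in fact for bad components the relevant expansion attaches the new leaf at one end while the modified vertex is the \emph{opposite} endpoint: at the level of continued fractions this is $[a_1,\ldots,a_l]\mapsto[a_1{+}1,a_2,\ldots,a_l,2]$ (or its mirror), not $[a_1,\ldots,a_l]\mapsto[2,a_1{+}1,\ldots,a_l]$. One can check that your operation applied to $[2,n{+}1,2]$ gives $[2,3,n{+}1,2]$ with $p=10n+1$, which is not of the form $m^2n$, while the opposite-end operation gives $[3,n{+}1,2,2]$ with $p=9n$, matching $(m,k)=(3,1)$. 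Moreover, if you try to realise your ``same end'' expansion at the lattice level you will find that the second basis vector $e_\beta$ supporting the new leaf cannot be one of $e_1,e_2$ (the adjacency equations fail), forcing $e_\beta$ to be spare; this is impossible once the ambient subset is square, and when it is possible the support conclusion of the proposition actually fails. So your justification ``by the condition defining the expansion, $e_\beta$ appears only in the new leaf $v$'' is also misattributed: the expansion condition constrains $e_\alpha$, not $e_\beta$.

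The fix is to run your induction with the correct (opposite-end) expansion. Then $e_\beta$ is automatically one of the first $s-1$ basis vectors already internal to $C$ (since the new leaf must link the old endpoint, which by induction is supported there), the support statements go through with the same special index $t$ as before, and the continued-fraction update $[a_1,\ldots,a_l]\mapsto[a_1{+}1,\ldots,a_l,2]$ sends $m^2n/(mnk{+}1)$ to $(m')^2n/(m'nk'{+}1)$ with $m'=m{+}k$, $k'=k$ (or the symmetric update at the other end). Alternatively, do as the paper does and invoke Lisca's lemma for the lens space claim.
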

\begin{proof}
In the case where $s=3$, this description follows from the discussion above. For $s>3$, it follows from the definition of expansion by a final $(-2)$ vector. The claim that $C$ represents $L(m^2 n, mnk+1)$ is the content of \cite[Lemma 3.3]{lisca2}.
It is apparent that $I(C)$ does not change under expansion by a final $(-2)$-vector. A simple calculation verifies the dependence on $n$.
\end{proof}


The key results concerning bad components are the following:

\begin{Prop}\label{twoBC}
Let $S$ be a linear double subset with $c(S)=2$. Then $S$ does not have a bad component.
\end{Prop}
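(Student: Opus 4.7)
The plan is to argue by contradiction: suppose $S$ is a linear double subset with $c(S)=2$ that has a bad component $C$, and call the other graph component $C'$. The first observation is that $S$ must itself be irreducible. Indeed, if $S$ were the disjoint union $T_1 \cup T_2$ of two irreducible summands then each $T_i$ would contribute $c(T_i) \geq 2$ by Corollary \ref{no1s}, forcing $c(S) \geq 4$. So $S$ is a single irreducible subset whose graph $\Gamma(S)$ happens to split into exactly $C \sqcup C'$, meaning that $C$ and $C'$ are tied together by some shared basis vector.

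The normal form for bad components from Proposition \ref{factsaboutBC} then lets me write the ambient basis as $E_1 \sqcup E_2$ with $E_1$ used only by $C$ and $E_2$ shared between the central bad vector $v_t \in C$ and all vectors of $C'$. Writing $\beta$ for the restriction of $v_t$ to $E_2$, irreducibility forces $\beta \neq 0$. Expressing $A(S)$ in block form with respect to this partition, the block-diagonality of $-Q_\Gamma = A(S) A(S)^t$ produces the equation $A_{22}\beta^t = 0$, where $A_{22}$ is the $C'$-on-$E_2$ block. Meanwhile the self-intersection $v_t \cdot v_t = -n - 1$ together with the chain adjacencies controlling $v_t|_{E_1}$ pin down the Euclidean length of $\beta$; in the minimal case $|C| = 3$ one gets $|\beta|^2 = n$ exactly.

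The heart of the argument then analyses $H(S) = \operatorname{im} A(S)/\operatorname{im} Q_\Gamma$ as a subgroup of $G(S) = G(C) \oplus G(C') = \mathbb{Z}/m^2n \oplus \mathbb{Z}/p$. Since the vectors of $C'$ have no $E_1$ component, the projection of $H(S)$ to $G(C')$ equals $H(C')$ outright, while its projection to $G(C)$ is generated by $H(C)$ together with the integer multiples of the coset of $v_t$ weighted by the entries of $\beta$. Matching this against the hypothesis $G(S) \cong H(S) \oplus H(S)$ forces tight divisibility constraints between $p$, $m$, $n$ (in particular $\mathbb{Z}/m^2n \oplus \mathbb{Z}/p$ must be a square group, essentially forcing $p = m^2 n$), together with constraints on the $\pm 1$ entries of $\beta$. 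I would then combine these with $|\beta|^2 = n$ and $A_{22}\beta^t = 0$ to derive a numerical contradiction.

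The main obstacle will be the case when the bad component has been expanded, so $|C| > 3$; here $v_t$ acquires further components in $E_1$ and the explicit form of $\beta$ is less transparent. My plan is to reduce to the minimal case by inducting on the number of final $(-2)$-expansions: peeling off a $(-2)$ leaf from $C$ and adjusting $S$ accordingly should produce a smaller linear double subset whose bad component has one fewer expansion, after which the base-case analysis of the $3$-chain $[-2,-n-1,-2]$ closes the argument.
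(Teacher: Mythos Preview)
Your block decomposition is set up correctly: the irreducibility of $S$, the partition $E_1 \sqcup E_2$, the orthogonality $A_{22}\beta^t = 0$, and (in the minimal case) $|\beta|^2 = n$ are all fine, and the reduction to $G(S) \cong (\mathbb{Z}/m^2n)^2$ is right once one uses that $H(S)$ is cyclic (Proposition~\ref{irred}). But two genuine gaps remain.

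First, the base case is never actually closed. You list the constraints $|\beta|^2 = n$ and $A_{22}\beta^t = 0$ and assert that a numerical contradiction ``would'' follow, but do not produce one. It is not clear how these geometric constraints on $\beta$ alone prevent $H(S) \cong \mathbb{Z}/m^2 n$: the entries of $\beta$ are $\pm 1$, there are $n$ of them, and the component $C'$ is otherwise essentially unconstrained, so there is real work left here.

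Second, and more seriously, the inductive step does not go through. Contracting a final $(-2)$ leaf from $C$ changes the parameters $(m,k)$ of the bad component, so the contracted subset $S_-$ has $G(S_-) \cong \mathbb{Z}/m_-^2 n \oplus \mathbb{Z}/p$ with $m_- \neq m$ in general, while the $\mathbb{Z}/p$ factor from $C'$ is unchanged. There is no reason the condition $G \cong H \oplus H$ should descend to $S_-$, so you cannot invoke the induction hypothesis. The double-subset property is simply not stable under these contractions.

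The paper's proof avoids both problems by treating an arbitrary bad component directly, with no induction and no case analysis on $|C|$. After establishing $G(S) = (\mathbb{Z}/m^2n)^2$, it shows that every column of $A(S)$ has order dividing $mn$ in $G(S)$: each column is split into its $C$-block and its $C'$-block, and these are compared with two auxiliary square subsets $\bar S$ (the bad component together with a length-$(n{-}1)$ chain of $(-2)$'s) and $S'$ (the subset with $C$ replaced by its reduced component), both of which have $G \cong \mathbb{Z}/m^2n \oplus \mathbb{Z}/n$ and hence $|H| = mn$. Since $m \geq 2$ this gives $mn < m^2 n$, contradicting $H(S) \oplus H(S) \cong (\mathbb{Z}/m^2n)^2$.
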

\begin{proof}Suppose $S$ has a bad component so $G(S) \cong \mathbb{Z}/{m^2 n} \oplus \mathbb{Z}/{k}$ for some $k$. This has square order so there is some $q$ so that $nk=q^2$. Every element of $H(S) \oplus H(S)$ has order dividing $mq$ and this implies that $k=mq=m^2n$.

Then we may assume that $G(S) = \left(\mathbb{Z}/{m^2 n}\right)^2$ for some $n,m \geq 2$.

We will show that if $S$ has a bad component then every element of $H(S)$ has order dividing $mn$ and so $H(S) \oplus H(S)$ is not $\left(\mathbb{Z}/{m^2 n}\right)^2$.

Letting $r,s$ be as in Proposition \ref{factsaboutBC}, $H(S)$ is the subgroup of $\frac{\mathbb{Z}^{s+r}}{\operatorname{im} Q_{\Gamma}}$ generated by the columns of $A(S)$. Our aim is to show that multiplying each column by $mn$ gives an element of $\operatorname{im} Q_{\Gamma}$.

The matrix $Q_{\Gamma}$ can be split up as $Q_1 \oplus Q_2$ where $Q_1$ is the $s \times s$ matrix corresponding to the bad component $C$ and $Q_2$ is the $r \times r$ matrix coming from the other component.
A column $$v=\begin{pmatrix}v_1 \\ \vdots \\ v_s \\ v_{s+1} \\ \vdots \\ v_{s+r} \end {pmatrix} \text{ is trivial if } \begin{pmatrix}v_1 \\ \vdots \\ v_s  \end {pmatrix} = Q_1 y_1 \text{ and } \begin{pmatrix} v_{s+1} \\ \vdots \\ v_{s+r} \end {pmatrix} = Q_2 y_2$$ for some $y_1, y_2$.

These two conditions can be checked separately by comparing $S$ to other subsets with similar columns.

Consider the first $s$ rows of $A(S)$.
Proposition \ref{factsaboutBC} tells us that, after suitable reordering, all but row $t$ has all its non-zero entries in the first $s-1$ columns. Therefore the last $r+1$ columns contain at most one non-zero entry which is in position $t$. As far as the order condition we are checking is concerned, it may be assumed that $n$ have entry $+1$ here and all others have entry zero. 
Now consider the subset $\overline{S}$ of $s+n-1$ vectors in $\mathbb{Z}^{s+n-1}$ where the first $s$ are the same as in $S$, except perhaps for the deletion of zero columns, and the last $n-1$ are given by $w_1 = e_s-e_{s+1}, \ldots w_{n-1} = e_{s+n-1} -e_{s+n}$. Note that the matrix of this subset has the same first $s$ rows as $A(S)$. The graph of $\overline{S}$ consists of the bad component $C$ and a chain of $n-1$ vertices of weight $-2$. 
The incidence matrix for this graph is given by $\overline{Q}=Q_1 \oplus Q_3$ where $Q_3$ is the incidence matrix for the chain of $-2$'s. This presents $\mathbb{Z}/{m^2 n} \oplus \mathbb{Z}/n$. The group $H(\overline{S})$ is of square root order so every element has order dividing $mn$. This shows that, for each of the columns with $s$ rows appearing in the upper part of $A(S)$, the vector given by multiplying by $mn$ is in the image of $Q_1$.

Now consider the last $r$ rows of $A(S)$. By Proposition \ref{factsaboutBC}, each of these has all the first $s$ entries zero.
Let $S'$ be the subset obtained from $S$ by replacing the bad component $C$ with the corresponding reduced component. This has a matrix with $r+1$ rows and columns and the last $r$ rows differ from those of $S$ only by the removal of the columns containing only zeros. The graph of $S'$ consists of the component of $S$ corresponding to $S \backslash C$ and an isolated vertex with weight $-n$, so $G(S')=\mathbb{Z}/{m^2 n} \oplus \mathbb{Z}/n$. Arguing as above, we see that the columns given by the last $r$ rows of each column of $A(S)$ gives an element of the image of $Q_2$ when multiplied by $mn$.

Thus, every column of $A(S)$ represents an element of order dividing $mn$ in $G(S)$, as claimed.
\end{proof}


The following technical result about subsets where every component is bad is also necessary. It will be convenient to introduce the following terminology. We call a subset $S$ of $m$ vectors in $\mathbb{D}^n$ square if $m=n$ and rectangular if $m=n+1$. 
Note that when $S$ is rectangular the matrix $Q_{\Gamma(S)}= -A(S)A(S)^t$ is singular.

\begin{Prop}\label{allBC}
If $S$ is a linear subset with $b(S)=c(S)=-I(S)$, $G(S)$ is not isomorphic to $H(S) \oplus H(S)$.
\end{Prop}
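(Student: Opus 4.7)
Setup. From the hypothesis, Proposition~\ref{factsaboutBC} together with the identity $-I(C)=4-n$ for a bad component with reduced weight $-n$ gives
$$\sum_{j=1}^{c} n_j = 3c, \qquad n_j\ge 2,\ c=c(S),$$
and each $C_j$ represents $L(m_j^2 n_j, m_j n_j k_j+1)$ with $m_j\ge 2$. Because bad components have disjoint exclusive basis vectors (Proposition~\ref{factsaboutBC}), $Q_{\Gamma}$ is block diagonal, giving $G(S)\cong \bigoplus_j \mathbb{Z}/(m_j^2 n_j)$. In particular, $|G(S)|=\prod_j m_j^2 n_j$, and if $G(S)\cong H(S)\oplus H(S)$ then $\prod_j n_j$ must be a perfect square.

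Step 1: exponent bound on $H(S)$. I would apply the argument of Proposition~\ref{twoBC} one bad component at a time. For each $j$, form the auxiliary \emph{square} subset $\bar S_j$ by augmenting $C_j$ with a chain of $n_j-1$ weight $-2$ vectors while leaving the other components untouched. A direct determinant computation gives $|G(\bar S_j)|=m_j^2 n_j^2$, so the square-order identity $|H(\bar S_j)|^2=|G(\bar S_j)|$ forces $|H(\bar S_j)|=m_j n_j$ and hence $\mathrm{Exp}(H(\bar S_j))\mid m_j n_j$. Reasoning exactly as in Proposition~\ref{twoBC}, this implies that the projection of every column of $A(S)$ onto the $C_j$-component of $G(S)$ is annihilated by $m_j n_j$, i.e.\ lies in $m_j\cdot G(C_j)\cong \mathbb{Z}/(m_j n_j)$. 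Combining over $j$ yields
$$H(S)\subseteq \bigoplus_j m_j\cdot G(C_j)\cong\bigoplus_j \mathbb{Z}/(m_j n_j), \qquad \mathrm{Exp}(H(S))\mid\mathrm{lcm}_j(m_j n_j).$$

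Step 2: exponent comparison. Suppose for contradiction that $G(S)\cong H(S)\oplus H(S)$. Then $\mathrm{Exp}(G(S))=\mathrm{Exp}(H(S))$, so $\mathrm{lcm}_j(m_j^2 n_j)=\mathrm{lcm}_j(m_j n_j)$. Writing $\alpha_j(p)=v_p(m_j)$, $\beta_j(p)=v_p(n_j)$, this reads $\max_j(2\alpha_j+\beta_j)=\max_j(\alpha_j+\beta_j)$ for every prime $p$. Since $2\alpha_j+\beta_j\ge \alpha_j+\beta_j$, with equality precisely when $\alpha_j=0$, the equality forces the existence of $j^*=j^*(p)$ with $\alpha_{j^*}(p)=0$ and
$$\beta_{j^*}(p)\ \ge\ 2\alpha_j(p)+\beta_j(p)\qquad\text{for every }j.$$
Consequently, for every prime $p\nmid \prod_j n_j$ we immediately force $\alpha_j(p)=0$ for all $j$; and for every prime $p\mid\prod_j m_j$, we obtain $p^{V_p}\mid n_{j^*(p)}$ where $V_p\ge 2$, so $p^{V_p}\le n_{j^*(p)}\le c+2$.

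Step 3: deriving the contradiction. The plan is to combine the bound $p^{V_p}\le c+2$, the sum constraint $\sum n_j=3c$ (which forces $|\{j:n_j\ge 4\}|\le c/2$, since $\sum(n_j-2)=c$ and each such $j$ contributes at least $2$), and the parity condition that each elementary divisor of $G(S)$ at $p$ must appear with even multiplicity (a necessary consequence of $G\cong H\oplus H$). First, if only one prime $p_0$ divides $\prod_j m_j$, then $j^*(p_0)$ must have $m_{j^*(p_0)}=1$, immediately contradicting $m_j\ge 2$. Otherwise, iterating the displayed inequality over the primes dividing $\prod_j m_j$ and tracking which component plays the role of $j^*$ for each prime, one shows that the set of "large" components $\{j:n_j\ge 4\}$ cannot simultaneously accommodate the required divisibilities by $p^{V_p}$ for all primes $p\mid\prod_j m_j$ while keeping every $m_j\ge 2$ and satisfying the parity condition. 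The main obstacle is precisely this combinatorial bookkeeping: one has to rule out every $n$-configuration with $\sum n_j=3c$ and $\prod n_j$ a perfect square, showing case-by-case that the constraints together force every $m_j=1$, a contradiction. I expect the delicate part to be the "large $c$" regime, where more primes can potentially cooperate, and where parity of the multiplicities in $\{c_j(p)=2\alpha_j+\beta_j\}_j$ (rather than the exponent bound alone) provides the decisive restriction.
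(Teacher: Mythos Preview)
Your Steps 1 and 2 are correct and parallel the paper's exponent technique, but Step 3 is not just unfinished---the tools you propose to use there are insufficient. Take $c=18$ with $(n_1,\ldots,n_{18})=(4,9,9,4,2,\ldots,2)$ and $(m_1,\ldots,m_{18})=(3,2,2,3,3,\ldots,3)$. Then $\sum n_j=3c$, each $m_j\ge 2$, $\prod n_j$ is a perfect square, every $p$-valuation $2\alpha_j+\beta_j$ occurs with even multiplicity, and your exponent identity $\mathrm{lcm}_j(m_j^2n_j)=\mathrm{lcm}_j(m_jn_j)$ holds (both sides equal $36$). So the combination of the exponent bound, the sum constraint, and the parity condition does \emph{not} force any $m_j=1$; your Step 3 plan cannot close the argument as written. (What actually kills this example is the full containment from Step 1: at $p=3$ one needs $(\mathbb{Z}/9)^9\hookrightarrow\bigoplus_j\mathbb{Z}/3^{\alpha_j+\beta_j}$, which has only two $\mathbb{Z}/9$ summands. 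But you never invoke this stronger consequence of Step 1 in Step 3.)

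The paper sidesteps the bookkeeping entirely by using lattice structure you have discarded. It passes to the reduced square subset $S'$ whose graph is $c$ isolated vertices of weights $-n_j$, and observes that if some $n_k=2$ then the vector $v_k\in S'$ must be linked to some $v_j$ with $n_j=2$ (a short determinant argument on the rectangular subset obtained by deleting $v_k$). This exhibits an irreducible piece $T\subset S$ with $c(T)=2$ consisting of two bad components, and Propositions~\ref{irred} and~\ref{twoBC} immediately give the contradiction. Once every $n_j\ge 3$, the identity $\sum n_j=3c$ forces $n_j=3$ for all $j$, and then the exponent argument (your Step 2 specialised to this case) finishes in one line: pick $p^k\,\|\,m_i$ maximal, so $G(S)$ has an element of order $3p^{2k}$ while $\mathrm{lcm}_j(3m_j)$ has $p$-valuation only $k$ (or $k+1$ if $p=3$). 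The structural reduction via $S'$ is the idea you are missing.
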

\begin{proof}
Since every component of $S$ is bad the group $G(S)$ is a direct sum of cyclic groups of the form $\mathbb{Z}/{m_i^2 n_i}$ ($1 \leq i \leq c(S)$). The condition that $c(S) + I(S) =0$ implies that \begin{equation}\label{all3}c(S) + \sum_{i=1}^{c(S)}{(n_i -4)} = -3c(S) + \sum_{i=1}^{c(S)}{n_i}=0.\end{equation}

By definition, each $n_i \geq 2$. By Proposition \ref{factsaboutBC}, there is a square linear subset $S'$ whose graph is given by $c(S)$ isolated vertices with weights $n_i$.
Suppose some $n_k=2$. The vector $v_k$ in $S'$ with $v_k \cdot v_k = -2$ can be linked to other vectors $v_j$. 
Suppose that for each of these vectors $n_j \geq 3$. Then, by deleting $v_k$ and the columns on which it is supported, we get a rectangular subset with graph given by isolated vertices with weight $n_i$ or $n_j-2$ with $n_j \geq 3$. This is not possible as the incidence matrix has non-zero determinant. A similar argument shows that $v_k$ must be linked to some $v_j$.

We now consider the possibility that some $n_j=2$. In this case $v_k$ can only be linked to the corresponding vector $v_j$ in $S'$ so it follows that there is a decomposition of $S$ as $T \cup T'$ where $T$ consists of the bad components built from $v_k$ and $v_j$. It now follows from Propositions \ref{irred} and \ref{twoBC} that $S$ is not a linear double subset.

We now turn to the case where each $n_i$ is at least 3. Condition (\ref{all3}) then implies that $n_i=3$ for each $i$.

Now, again, we can modify the subset $S$. For each bad component $C_i$, let $Q_i$ be the incidence matrix. Each $Q_i$ presents $\mathbb{Z}/{3 m_i^2 }$. The collection of rows of $A(S)$ corresponding to $Q_i$ is described by Proposition \ref{factsaboutBC}. In particular, we can obtain a subset $S'$ for the graph given by $C_i$ and a chain of $2$ vertices of weight $-2$ as in the proof of Proposition \ref{twoBC} by extracting the rows corresponding to $C_i$ from $A(S)$, modifying the central row with square $-3$ so that every entry is zero or one, deleting any zero columns and adding a pair of new rows given by $w_1 = e_t-e_{t+1}, w_{2} = e_{t+1} -e_{t+2}$.

Then $H(S')$ has order $3 m_i$. Let $M$ be the least common multiple of $\{3 m_i\}_{i=1}^{c(S)}$. It follows that $M H(S) =0$. 

Find a prime power $p^k$ and $i \in [1, c(S)]$ such that $p^k$ divides $m_i$ and $p^{k+1}$ does not divide any $m_j$.
There is an element in $G(S)$ of order $3p^{2k}$. However, it is clear $3 p^{2k}$ does not divide $M $ and this shows that there is no element of this order in $H(S) \oplus H(S)$.
\end{proof}


We say that a pair of components $C_1, C_2$ of a weighted graph are complementary if the manifolds $Y_i$ bounding the 4-manifolds produced by plumbing according to $C_i$ are such that $Y_1 \cong -Y_2$.

\begin{Prop}\label{complementarystuff}
Let $S$ be a linear subset such that \begin{equation}\label{oricond}c(S)+I(S) \leq 0 \text{ and }b(S) +I(S) <0.\end{equation}
If $S=\cup_iT_i$ where each $T_i$ is irreducible with $c(T_i) \geq 2$ and $b(T_i)=0$ then the graph of $S$ consists of pairs of complementary components.

In addition, for each $T_i$, there are generators $t,s$ for $G(T_i)$ such that $H(T_i)$ is generated by $t+s$ or $t-s$.
\end{Prop}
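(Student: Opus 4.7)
Since $c$, $I$, and $b$ are each additive over the decomposition $S=\cup_i T_i$, and $b(T_i)=0$ for every $i$, the hypotheses translate to
\[
\sum_i \bigl(c(T_i)+I(T_i)\bigr)\leq 0 \qquad \text{and} \qquad \sum_i I(T_i)<0.
\]
The plan is to deduce the conclusion term by term, by combining these numerical bounds with a per-component structural inequality in the spirit of Lisca's analysis of irreducible linear subsets.

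The key step I would establish is the combinatorial inequality $c(T)+I(T)\geq 0$ for every irreducible linear subset $T$ with $c(T)\geq 2$ and $b(T)=0$, together with the characterisation that equality forces $c(T)=2$ and the two components of $\Gamma(T)$ to be complementary lens-space plumbings. I would approach this by induction on $-I(T)$, performing a contraction of $T$ at each step (one exists because irreducibility means some pair of vectors in $T$ shares support on a common basis vector) and tracking the effect on the three invariants. The hypothesis $b(T)=0$ is what prevents the induction from stabilising at a configuration with a bad component, so the base case is genuinely a pair of complementary chains rather than the bad-component configurations classified by Proposition \ref{factsaboutBC}.

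Once the per-component inequality is proved, summing gives $c(S)+I(S)\geq 0$; together with the hypothesis $c(S)+I(S)\leq 0$ this forces $c(T_i)+I(T_i)=0$ for every $i$, and the equality case yields that each $T_i$ consists of two complementary components. The auxiliary strict inequality $b(S)+I(S)<0$, which here reduces to $I(S)<0$, is used to rule out degenerate summands (for example a $T_i$ made of two isolated weight $-2$ vertices) which would satisfy $c(T_i)+I(T_i)=0$ but require separate bookkeeping.

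For the statement about generators, once $T_i$ realises a complementary pair $L(p_i,q_i)\#L(p_i,-q_i)$ we have $G(T_i)\cong \mathbb{Z}/p_i\oplus\mathbb{Z}/p_i$ with canonical generators $t,s$ dual to the central vertices of the two legs. A direct inspection of $A(T_i)$ in Lisca's explicit normal form for irreducible subsets realising complementary pairs shows that every column projects in $G(T_i)$ to an integer multiple of $t+s$ or of $t-s$, the sign depending on the relative orientations of the two legs, so $H(T_i)$ is exactly the cyclic subgroup $\langle t\pm s\rangle$. The main obstacle I anticipate is the inequality $c(T)+I(T)\geq 0$ and its equality case: the contraction induction requires careful bookkeeping, and the no-bad-component hypothesis is essential because a bad component can absorb a long chain of expansions without ever producing a complementary pair, which is precisely the situation the strict hypothesis on $b$ is designed to exclude.
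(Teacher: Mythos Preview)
Your overall plan is sound and reaches the same conclusion as the paper, but it is organised differently and the ``key step'' you flag is precisely where all the work lies. The paper does not attempt to prove the inequality $c(T)+I(T)\geq 0$ from scratch by a contraction induction; instead it quotes Lisca's machinery directly. Specifically, \cite[Lemma~5.5]{lisca2} guarantees that at least one irreducible $T_i$ satisfies \eqref{oricond}; \cite[Proposition~4.10]{lisca2} then forces $c(T_i)=2$; the proof of \cite[Lemma~5.4]{lisca2} together with \cite[Lemma~4.7]{lisca2} identifies such a $T_i$ as a complementary pair, whence $c(T_i)+I(T_i)=0$ by \cite[Lemma~2.6]{lisca}; one then removes $T_i$ and iterates. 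Your inequality is in fact equivalent to this package of results (if $c(T)+I(T)<0$ and $b(T)=0$ then both parts of \eqref{oricond} hold for $T$, and the same Lisca lemmas give a contradiction), so your approach works, but the ``induction on $-I(T)$ via contraction'' you sketch \emph{is} Lisca's analysis, and contractions do not behave as simply as you suggest: they can change $c$, destroy irreducibility, and create bad components, which is why Lisca's arguments are as delicate as they are.

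One point to correct: your explanation of the role of the strict inequality $b(S)+I(S)<0$ is off. In your framework, once the per-component inequality $c(T_i)+I(T_i)\geq 0$ is established and summed against the hypothesis $c(S)+I(S)\leq 0$, you already have equality everywhere and the characterisation of the equality case; there are no ``degenerate summands'' left to exclude (the two-$(-2)$-vertex example you mention is already a genuine complementary pair $L(2,1)\#L(2,1)$). The strict condition is not a clean-up device at the end; it is what allows Proposition~4.10 to be applied (equivalently, what makes the inequality provable), since $b(T)=0$ turns $b(T)+I(T)<0$ into $I(T)<0$, which is the regime in which Lisca's classification bites. Your treatment of the generators via the explicit form of $A(T_i)$ from \cite[Lemma~4.7]{lisca2} matches the paper's.
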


\begin{proof}
By \cite[Proof of Lemma 5.5]{lisca2} there is at least one $T_i$ which satisfies \eqref{oricond}. By \cite[Proposition 4.10]{lisca2} this must have $c(T_i)=2$.

It is shown in the proof of \cite[Lemma 5.4]{lisca2} that $T_i$ is as described in \cite[Lemma 4.7]{lisca2} and thus that plumbing on the graph of $T_i$ gives a manifold with boundary $L(p_i,q_i) \# L(p_i,p_i-q_i)$ for some $p_i,q_i$. This means that $c(T_i) + I(T_i)=0$ by \cite[Lemma 2.6]{lisca}. We can apply the same argument to each irreducible subset $T_i$ since it follows that \eqref{oricond} must hold for each.

When $c(T)=1$, a simple induction argument on the length $l$ of the chain shows that $G(T)=\mathbb{Z}^{l} / \operatorname{im} Q_{\Gamma(T)}$ is generated by $r=(1,0,\ldots ,0)^t$. Similarly, when $c(T)=2$ we easily find a pair of generators $t,s$ for $G(T) = \operatorname{coker} Q_1 \oplus Q_2$.

For every irreducible subset $T$ described by \cite[Lemma 4.7]{lisca2} either $t+s$ or $t-s$ is the first column of $A(T)$ and thus represents an element of $H(T)$. It follows from comparing the orders that this generates the group.
\end{proof}

We may now prove Theorem \ref{sumslens} by combining the above results with some results of \cite{lisca2}. Theorem \ref{non-ori} is proved in precisely the same way.
\begin{proof}[Proof of Theorems \ref{sumslens} and \ref{non-ori}]
Suppose $Y$ embeds in $S^4$, and so, consequently, does $-Y$.

There is a negative definite 4-manifold with boundary $Y$, for either orientation. Applying Corollary \ref{diag0} or \ref{diagnon} gives a linear double subset.

We may then choose an orientation. By \cite[Lemma 5.3]{lisca2}, we can assume that there is a linear double subset $S$ for which \eqref{oricond} holds.

Consider a decomposition of $S$ into irreducible components $S= \cup T_i$. By Corollary \ref{no1s} each has $c(T_i) \geq 2$.
Let $T$ be the union of all the $T_i$ which satisfy \eqref{oricond}. Each of these must then have $c(T_i)=2$ by \cite[Proposition 4.10]{lisca2}. Since $S$ is a double subset it follows from Propositions \ref{irred} and \ref{twoBC} that $T$ has no bad components and we may apply Proposition \ref{complementarystuff}.

Now consider $R=S\backslash T$.
This is possibly not irreducible and has $c(R) + I(R) \leq 0$ since the corresponding quantity is at most zero for $S$ and is equal to zero for each $T_i$. In order to have no irreducible component satisfy \eqref{oricond}, we must have $b(R)+I(R) \geq 0$. The fact that $b(R) \leq c(R)$ implies that $b(R)=c(R)=-I(R)$.

We require that $G(S) \cong H(S) \oplus H(S)$. Writing $S$ as the union of $R$ and $T$ gives $G(S) = G(T) \oplus G(R)$ and $H(S) = H(T) \oplus H(R)$.

It is clear that $G(T) \cong H(T) \oplus H(T)$. It follows that we must have $G(R) \cong H(R) \oplus H(R)$. 

However, this contradicts the result of Proposition \ref{allBC}. We conclude that $R$ is empty.
This proves that the Seifert invariants occur in weak complementary pairs as they are determined by $\Gamma$. Note that this graph does not distinguish between Seifert invariants of the form $$\frac{a}{b}=[a_1, \ldots ,a_n]^- \text{ and } \frac{a}{b'}=[a_n, \ldots ,a_1]^-.$$



We now use the second linear double subset given by Corollary \ref{diag0} or \ref{diagnon}. Each subset $S_k$ ($k=1,2$) is given by a union of irreducible subsets $T_{k,i}$, all of which satisfy (\ref{oricond}). Since the graphs of $S_1$ and $S_2$ are identical, we will just write $G(S)$ instead of $G(S_i)$. For each ratio $a/b$ let $T_k^{a/b}$ be the set of irreducible $T_{k,i}$ whose graph represents $L(a,b) \# L(a,a-b)$. The union of these subsets gives a summand $\left(\mathbb{Z}/a\right)^{2l}$ of $G(S)$. By Proposition \ref{complementarystuff}, this has generators $t_1,s_1, \ldots ,t_l,s_l$ and we can arrange that $H(T^{a/b}_1)$ is generated by $t_1+s_1, \ldots ,t_l +s_l$. There is a similar set of generators for $H(T^{a/b}_2)$ given by $\sigma(t_1)\pm \sigma(s_1), \ldots ,\sigma(t_l) \pm\sigma(s_l)$ where $\sigma$ is some permutation of $\{t_1,s_1, \ldots , t_n,s_n\}$.

When $a$ is even, $\frac{a}{2}\left(t_1+s_1+ \ldots + t_l+s_l\right)$ is an element of $H(T^{a/b}_1)$ and $H(T^{a/b}_2)$.
Theorem \ref{sumslens} now follows from Corollary \ref{diag0}. In the case of a non-orientable base orbifold, Corollary \ref{diagnon} implies that there can be at most one non-empty $T^{a/b}_i$ with $a$ even, completing the proof of Theorem \ref{non-ori}.

\end{proof}

\begin{Rmk}
The fact that any factor $L(p,q)$ in a connected sum of lens spaces embedding in $S^4$ has $p$ odd also follows from the linking form \cite{KK}.
\end{Rmk}



It is sometimes convenient to classify $S^1 \times S^2$ as a lens space since it also has a genus one Heegaard splitting.

\begin{Cor}
Let $L=\#L(p_i,q_i)$ with $p_i>q_i >0$ and suppose $L\#^n S^1 \times S^2$ embeds smoothly in $S^4$. Then $L$ also embeds smoothly.
\end{Cor}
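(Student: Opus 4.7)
The plan is to mimic the proof of Corollary~\ref{diag0} for the (non rational homology sphere) manifold $M := L \#^n S^1 \times S^2$ and thereby recover exactly the combinatorial data that the proof of Theorem~\ref{sumslens} needs about $L$ alone. Concretely, I would take the standard negative definite plumbing $X_L$ bounding $L$ from Figure~\ref{h linear plumbings} and form
\[
X = X_L \,\natural\, (S^1 \times D^3) \,\natural\, \cdots \,\natural\, (S^1 \times D^3)
\]
with $n$ boundary sums, so that $\partial X = M$, $H_1(X) \cong \mathbb{Z}^n$ is torsion free, $H^3(X) = 0$, and $Q_X$ coincides with the non-singular negative definite form $Q_{X_L}$ (the $S^1\times D^3$ summands contribute nothing in $H_2$).

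Given a smooth embedding producing $S^4 = V_1 \cup_M -V_2$, form $W_i = X \cup_M -V_i$. Since $H^2(S^1\times S^2)$ is torsion free, $\tau H^2(M) = \tau H^2(L)$; and the kernel $K$ of $H_1(M) \to H_1(X)$ equals the finite group $\tau H_1(L)$, which has rank $k=0$. Proposition~\ref{homologycount} then gives $b_2(W_i) = b_2(X_L)$ and $\sigma(W_i) = -b_2(X_L)$, so each $W_i$ is a closed negative definite 4-manifold and Donaldson's theorem applies. Applying Theorem~\ref{mainthm} to the pair $(X, V_i)$ should produce integer matrices $A_1, A_2$ with $A_i A_i^t = -Q_{X_L}$ such that, under the identification $\operatorname{coker} Q_{X_L} \xrightarrow{\delta} \tau H^2(M)$, the torsion of the image of $H^2(V_i) \to H^2(M)$ corresponds to $H_i := \operatorname{im} A_i / \operatorname{im} Q_{X_L}$.

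Combining this with Lemma~\ref{basichom} (which applies verbatim to $M$ and yields $\tau H^2(M) \cong \tau H^2(V_1) \oplus \tau H^2(V_2)$ with $\tau H^2(V_1) \cong \tau H^2(V_2)$) gives the splitting
\[
\operatorname{coker} Q_{X_L} \;=\; H_1 \oplus H_2, \qquad H_1 \cong H_2.
\]
This is precisely the output of Corollary~\ref{diag0} that is fed into the Lisca machinery of linear double subsets in the proof of Theorem~\ref{sumslens}. That combinatorial analysis then forces each $p_i$ to be odd and $L \cong Y \# -Y$, after which Theorem~\ref{sumslens} yields a smooth embedding of $L$ in $S^4$.

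The step that most deserves care is the identification of $\operatorname{coker} Q_{X_L}$ with $\tau H^2(M)$ via $\delta: H^2(X) \to H^2(M)$ in the presence of nonzero $b_1(M)$. One has to verify, from the long exact sequence of $(X, M)$, that the image of $\delta$ is exactly the torsion part of $H^2(M)$: the cokernel of $\delta$ embeds in $H^3(X, M) \cong H_1(X) \cong \mathbb{Z}^n$ (which is torsion free), so $\delta$ captures all torsion, while $\ker \delta = \operatorname{im} Q_{X_L}$ follows from $H^3(X) = 0$ and $H_1(X)$ being torsion free. Once this is in place, the remaining arguments transplant unchanged from the proofs of Theorem~\ref{mainthm}, Corollary~\ref{diag0}, and Theorem~\ref{sumslens}.
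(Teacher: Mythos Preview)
Your proposal is correct and is precisely the approach the paper takes: the paper's proof consists of the single sentence ``Replace the negative definite 4-manifold $X_L$ with boundary $L$ by $X_L \natural^n S^1 \times D^3$ and follow the proof of Theorem~\ref{sumslens},'' and what you have written is a careful unpacking of exactly that. Your verification that $\operatorname{im}\delta = \tau H^2(M)$ (using that $H^3(X,M)\cong H_1(X)\cong\mathbb{Z}^n$ is torsion free) is the one point that needs to be checked beyond the rational homology sphere case, and your argument for it is fine.
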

\begin{proof}Replace the negative definite 4-manifold $X_L$ with boundary $L$ by $X_L \natural^n S^1 \times D^3$ and follow the proof of Theorem \ref{sumslens}.
\end{proof}

A similar approach gives a proof of Theorem \ref{e=0}.
\begin{proof}[Proof of Theorem \ref{e=0}]
Suppose $Y$ embeds smoothly in $S^4$.

By Corollary \ref{diag1} we have a rectangular subset $S$. The graph $\Gamma(S)$ is star-shaped and has a semi-definite incidence matrix.

Deleting the vector in $S$ corresponding to the central vertex of $\Gamma$ gives a new subset $S'$. This subset is linear and has the additional property that there is a vector $v$ which links once to a leaf of each component of the graph of $S'$ and not to any other vector.

Note that we may choose either orientation for $Y$ and so can assume that $S'$ satisfies condition \eqref{oricond}. We consider the irreducible components of $S'$. To apply Proposition \ref{complementarystuff} we need to show that every irreducible component $T$ has $c(T) \geq 2$ and $b(T)=0$.

Suppose that $c(T)=1$. Plumbing on the graph of $T$ gives the lens space $L(p,q)$ for some $p>q>0$. There is an extra vector $v$ such that $T \cup \{v\}$ is a rectangular subset and has a linear graph obtained from that of $T$ by adding a vertex onto one end, with weight $t$. Since the subset is rectangular, it follows that the determinant of the incidence matrix of this graph must be zero. However we can easily see that the graph is negative definite, so we conclude $c(T) \geq 2$. 

Now suppose that $T$ has a bad component $C$. By definition, this bad component can be built up from a linear chain of length three, with weights $-2, -n-1$ and $-2$ respectively. Let $C'$ be the component obtained from $C$ by deleting the vertex with weight $-n-1$. Suppose there is a new vertex $v$ which is only linked to one leaf of $C$ and consider the subset $T \cup \{v\}$. By Proposition \ref{factsaboutBC} each of the $r$ components of $C'$ is supported on $r$ columns of the matrix for this subset. We may then get a rectangular subset $T'$ by deleting the other columns and every row corresponding to $T \backslash C'$. The resulting graph has two components and is obtained from $C$ by adding a new vertex of weight $t$ to one end and deleting the vertex of weight $-n-1$. Similar to above, the incidence matrix of this graph is negative definite and so we conclude that $b(T)=0$.


It now follows from Proposition \ref{complementarystuff} that $Y$ has Seifert invariants occurring in (possibly weak) complementary pairs and, by \cite[Proposition 4.10]{lisca2}, that each irreducible $T_i$ has $c(T_i)=2$. Adding a new row $v_i$ to each $T_i$ gives a linear graph, which is negative definite graph whenever $v_i \cdot v_i < -1$. This shows that each $v_i \cdot v_i=-1$ and the result now follows from the description of the irreducible subsets in Proposition \ref{complementarystuff} and \cite[Lemma 4.7]{lisca2}.


\end{proof}

\section{Further obstructions from spin and spin$^{c}$ structures}\label{spin}
The methods described in the previous sections are more difficult to implement and give weaker obstructions in the case of Seifert manifolds with orientable base surfaces and $e \neq 0$. We therefore look for additional obstructions. Since we have chosen to primarily consider the case of double branched covers of pretzel links, we will focus on applications to that case when convenient.

If $Y$ is a closed, oriented 3-manifold it admits spin and spin$^c$ structures. Suppose $Y$ embeds smoothly in $S^4$ and splits it as $S^4 = U \cup_Y -V$. The 4-manifolds $U$ and $V$ must have both spin$^c$ and spin structures. There are obstructions to $Y$ embedding in $S^4$ which can be found by looking at the spin and spin$^c$ structures on $Y$ which can be extended over either $U$ or $V$.

\subsection{Spin$^c$ structures on rational homology spheres and the $d$ invariant}
If a manifold $Y$ admits spin$^c$ structures then the set of spin$^c$ is a $H^2(Y;\mathbb{Z})$-torsor. Suppose that $Y$ is a rational homology sphere which embeds smoothly in $S^4$. This gives a pair of rational homology balls $U,V$ such that $S^4=U \cup_Y -V$.
The spin$^c$ structures on $Y$ which arise as the restrictions of spin$^c$ structures on $U$ correspond to the image of the inclusion map $H^2(U) \to H^2(Y)$.
By Lemma \ref{basichom}, the inclusion maps induce an isomorphism $$H^2(Y;\mathbb{Z}) \cong H^2(U;\mathbb{Z}) \oplus H^2(V;\mathbb{Z}).$$

Since these two summands are isomorphic, there are $k^2$ spin$^c$ structures on $Y$. At least $2k-1$ of these spin$^c$ structures extend over a rational ball since $k$ extend over each of the rational balls $U$ and $V$ and only one -- the restriction of the unique spin$^c$ structure on $S^4$ -- extends over both pieces.

The correction term, or $d$ invariant, from Heegaard-Floer theory is a $\mathbb{Q}$-valued invariant of a rational homology 3-sphere with a spin$^c$ structure, first introduced in \cite{OSAbsolutegradings}. For our purposes, the relevant feature of this invariant is that whenever $(Y,\mathfrak{s})$ is a spin$^c$ 3-manifold and there is a rational ball $B$ bounding $Y$ with a spin$^c$ structure which restricts to $\mathfrak{s}$ on the boundary, then $d(Y,\mathfrak{s})=0$.

The $d$ invariant for a Seifert rational homology sphere can be determined using the associated star-shaped negative definite graph \cite{OSPlumbed} since it has at most one bad point.

It is described in \cite{GJ} how to relate this to the obstruction derived from Donaldson's theorem. This is used to obtain a stronger version of Theorem \ref{mainthm} in the case where $Y$ has the $\mathbb{Z}/2$-homology of $S^3$.
We may restate \cite[Theorem 3.6]{GJ} as follows.
\begin{Thm}
\label{diagd}
Let $Y$ be a 3-manifold with $H_*(Y;\mathbb{Z}/2) \cong H_*(S^3;\mathbb{Z}/2)$ which smoothly bounds a rational ball. Suppose that $Y$ bounds a negative definite plumbing $X$ with at most two bad points. The vertices of this plumbing give a basis for $H_2(X)$ and we may then identify $H^2(Y;\mathbb{Z})$ with $\operatorname{coker}Q_X$.

Then there is a matrix $A$ such that $Q_X=-AA^t$ and every class of $\frac{\operatorname{im} A}{\operatorname{im} Q_X}$ contains a characteristic representative of the form $Ax$ for some $x \in\{\pm 1\}^{n}$. 
\end{Thm}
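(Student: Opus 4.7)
The plan is to build on Theorem \ref{mainthm} by exploiting, class by class, the vanishing of the $d$-invariant on a rational ball. First, I would apply the rational ball extension of Theorem \ref{mainthm} (namely Theorem 3.5 of \cite{GJ}) to produce a matrix $A$ with $Q_X = -AA^t$ and the identification of $\operatorname{im}(A)/\operatorname{im}(Q_X)$ with the torsion image of the restriction $H^2(U) \to H^2(Y)$, where $U$ is the rational ball bounded by $Y$. Gluing gives a closed, negative definite 4-manifold $W = X \cup_Y -U$ with $b_2(W) = n$, to which Donaldson's theorem supplies the diagonal basis $\{e_i\}$ of $H_2(W)/\text{torsion}$ used to define $A$.

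Next, I would interpret each class $[\alpha] \in \operatorname{im}(A)/\operatorname{im}(Q_X)$, after fixing a base spin$^c$ structure, as a spin$^c$ structure $\mathfrak{t}$ on $Y$ that extends over $U$. Because $U$ is a rational ball, the vanishing property of the correction term yields $d(Y,\mathfrak{t}) = 0$. Applying the Ozsv\'ath-Szab\'o inequality to the negative definite plumbing $X$, every spin$^c$ extension $\mathfrak{s}$ of $\mathfrak{t}$ over $X$ satisfies
\[ c_1(\mathfrak{s})^2 + n \;\le\; 4 d(Y,\mathfrak{t}) = 0. \]
The crux of the proof is to produce an extension $\mathfrak{s}$ for which this inequality is sharp, and this is exactly where the hypothesis on bad points enters: the plumbed computation of the $d$-invariant \cite{OSPlumbed}, together with its extension to the two-bad-point setting, expresses $d(Y,\mathfrak{t})$ as the maximum of $(c_1(\mathfrak{s})^2 + n)/4$ over spin$^c$ extensions, guaranteeing some $\mathfrak{s}$ with $c_1(\mathfrak{s})^2 = -n$.

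To conclude, I would glue this optimal $\mathfrak{s}$ with any spin$^c$ extension of $\mathfrak{t}$ over $U$ to obtain a spin$^c$ structure $\mathfrak{s}_W$ on $W$. Since $H^2(U;\mathbb{Q}) = 0$ we have $c_1(\mathfrak{s}_W|_U)^2 = 0$, and additivity of the self-intersection across the rational homology sphere $Y$ forces $c_1(\mathfrak{s}_W)^2 = -n$. Writing $c_1(\mathfrak{s}_W) = x$ in the diagonal basis, $x \in \mathbb{Z}^n$ is characteristic (so each entry is odd) with $-\sum x_i^2 = -n$, which forces $x \in \{\pm 1\}^n$. Since the inclusion-induced map $H^2(W) \to H^2(X)$ is represented by $A$, the restriction of $x$ to $H^2(X)$ is $Ax$, giving the required characteristic representative of $[\alpha]$ in $\operatorname{im}(A)/\operatorname{im}(Q_X)$. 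The hardest step in this scheme is the sharpness of the $d$-inequality, since without it the Ozsv\'ath-Szab\'o bound degenerates to the tautology $\sum x_i^2 \ge n$ and gives no control; the bad-point hypothesis is precisely what secures this sharpness.
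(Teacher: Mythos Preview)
The paper does not supply its own proof of this theorem; it is stated as a restatement of \cite[Theorem 3.6]{GJ} and used as a black box. Your outline is correct and is essentially the argument of Greene--Jabuka: Donaldson's theorem applied to $W=X\cup_Y -U$ produces $A$, each class in $\operatorname{im}A/\operatorname{im}Q_X$ is a spin$^c$ structure extending over the rational ball (hence has $d=0$), the plumbed formula for $d$ with at most two bad points realises this as a maximum of $(c_1(\mathfrak{s})^2+n)/4$, and the resulting characteristic covector on the diagonal lattice with square $-n$ must lie in $\{\pm1\}^n$, restricting to the desired $Ax$. You have correctly identified the sharpness step as the crux and the reason for the bad-point hypothesis.
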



\subsection{Spin structures and the $\overline{\mu}$ invariant}

If a manifold $Y$ admits a spin structure then the set of spin structures on $Y$ is a torsor for $H^1(Y;\mathbb{Z}/2)$. As with spin$^c$ structures, if $Y$ is a 3-manifold which embeds in $S^4$ there is an isomorphism induced by inclusion maps $$H^1(Y;\mathbb{Z}/2) \cong H^1(U;\mathbb{Z}/2) \oplus H^1(V;\mathbb{Z}/2).$$ 

\begin{Lem}\label{linkcomplem}If $Y$ is the double branched cover of a $k$-component link $L$ then it has $2^{k-1}$ spin structures. If $Y$ embeds smoothly in $S^4$ then $b_1(Y)$ is even if and only if $k$ is odd. In particular, when $L$ is a pretzel link $b_1(Y)$ is zero when $k$ is odd and one when $k$ is even.

\end{Lem}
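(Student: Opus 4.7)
The plan is to establish the three claims of the lemma in order. Since spin structures on an oriented 3-manifold form a torsor over $H^1(Y;\mathbb{Z}/2)$, the first assertion reduces to the standard calculation that $H_1(Y;\mathbb{Z}/2) \cong (\mathbb{Z}/2)^{k-1}$ for the double branched cover of a $k$-component link in $S^3$, which I would just quote.

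For the parity claim I would work with the splitting $S^4 = U \cup_Y -V$ produced by the embedding. Mayer--Vietoris with $\mathbb{Z}/2$ coefficients, using $H_1(S^4;\mathbb{Z}/2) = H_2(S^4;\mathbb{Z}/2) = 0$, gives $H_1(Y;\mathbb{Z}/2) \cong H_1(U;\mathbb{Z}/2) \oplus H_1(V;\mathbb{Z}/2)$, while the corresponding rational statement combined with Lemma \ref{basichom} yields $b_1(Y) = b_1(U) + b_1(V)$. For each $W \in \{U,V\}$ one has $\dim_{\mathbb{Z}/2} H_1(W;\mathbb{Z}/2) = b_1(W) + n_2(W)$, where $n_2(W)$ is the number of cyclic $2$-primary summands of $\tau H_1(W;\mathbb{Z})$. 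The key observation is that $n_2(U) = n_2(V)$: the identification $H^2(U;\mathbb{Z}) \cong H_1(V;\mathbb{Z})$ from Lemma \ref{basichom}(2), together with the universal-coefficient isomorphism $\tau H^2(U;\mathbb{Z}) \cong \tau H_1(U;\mathbb{Z})$, forces $\tau H_1(U) \cong \tau H_1(V)$. Substituting yields $k - 1 = b_1(Y) + 2n_2(U)$, so $b_1(Y) \equiv k - 1 \pmod 2$ and the equivalence follows.

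For the pretzel refinement, the double branched cover of $P(a_1,\ldots,a_n)$ is a Seifert manifold over $S^2$, and such Seifert manifolds satisfy $b_1(Y) \in \{0,1\}$ (equal to $1$ precisely when the generalised Euler invariant $e(Y)$ vanishes). Combining this with the parity statement forces $b_1(Y) = 0$ when $k$ is odd and $b_1(Y) = 1$ when $k$ is even.

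There is no conceptually hard step. The only point that requires care is the bookkeeping separating free and torsion parts of $H_1(W;\mathbb{Z}/2)$ and matching the torsion parts of $H_1(U;\mathbb{Z})$ and $H_1(V;\mathbb{Z})$ via Lemma \ref{basichom}, so that the torsion contributions cancel modulo $2$ and leave exactly $b_1(Y)$ on the right-hand side.
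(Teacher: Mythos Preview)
Your argument is correct and follows essentially the same route as the paper. The paper is terser: for the spin-structure count it cites Turaev's correspondence between quasiorientations and spin structures rather than the equivalent homological statement $H_1(Y;\mathbb{Z}/2)\cong(\mathbb{Z}/2)^{k-1}$, and for the parity claim it simply asserts (via Alexander duality, in the spirit of Lemma~\ref{basichom}) that the number of spin structures on a $3$-manifold embedding in $S^4$ has the form $2^{b_1(Y)}l^2$. Your explicit bookkeeping with $n_2(U)=n_2(V)$ is exactly what underlies that assertion, and the pretzel step is identical.
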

\begin{proof}
A statement analogous to Lemma \ref{basichom} (2) holds for $\mathbb{Z}/2$-coefficient first cohomology, also due to Alexander duality, and shows that the number of spin structures on a 3-manifold $Y$ embedding in $S^4$ is $2^{b_1(Y)} l^2$ for some integer $l$. This is a square precisely when the first Betti number is even. By \cite{turaev} there is a correspondence between quasiorientations of a link and spin structures on the double branched cover. For a $k$-component link there are $2^{k -1}$ spin structures on the double branched cover and this is a square precisely when $k$ is odd.

When $L$ is a pretzel link, $Y$ is a Seifert manifold with base $S^2$ and it follows from, for example, \cite[Theorem 3.1]{hillman}, that $b_1(Y) \leq 1$.


\end{proof}

Let $Y$ be given as the boundary of a 2-handlebody $X$ represented by a framed link in $S^3$.
\begin{Defn}\label{defcharsub}
A sublink $L'$ of a framed link $L$ is called characteristic if for every component $K$ of $L$ the total linking number $\operatorname{lk} (K, L')$ is congruent modulo 2 to 
the framing on $K$.
\end{Defn}

Spin structures on $Y$ correspond bijectively to characteristic sublinks of the diagram for $X$ (see \cite[Proposition 5.7.11]{GS} for example). The characteristic sublink of a spin structure $\mathfrak{s}$ represents an obstruction to extending $\mathfrak{s}$ over the 2-handlebody. If the characteristic sublink is empty, the 2-handlebody has a unique spin structure which restricts to $\mathfrak{s}$ on the boundary.

Kaplan \cite{Kaplan} gives an algorithm which produces a spin 2-handlebody extending a given spin structure on any 3-manifold. The algorithm uses handle-slides and blow-ups to remove the characteristic sublink. We briefly recall the effects of these moves on characteristic sublinks. (See \cite[Section 5.7]{GS} for a more detailed discussion.) If we slide one component of a characteristic sublink over another the characteristic sublink in the new diagram simply contains the new curve, and so has one fewer component. The new curve added in a blow-up is included in a characteristic sublink if and only if it has an even linking number with the sublink. If we blow down a component in a characteristic sublink then the corresponding characteristic sublink in the resulting diagram consists of the other curves in the original.


Suppose that $X$ is given by plumbing on a tree $\Gamma$. The spin structures on the boundary of $X$ now correspond bijectively to subsets of the vertex set of $\Gamma$ which are characteristic for the incidence matrix of $\Gamma$. Such sets, or equivalently the classes they represent in $H_2(X;\mathbb{Z}/2)$, are called (homology) Wu sets and are always isolated.

\begin{Defn}Let $X$ be a plumbing according to a weighted tree. The Neumann-Siebenmann $\overline{\mu}$ invariant of $Y=\partial X$ with spin structure $\mathfrak{s}$ corresponding to a Wu set $w$ is defined as $\overline{\mu}(Y,\mathfrak{s}) = \sigma(X) - w \cdot w$.
\end{Defn}
It is shown in \cite{neumann} that this only depends on $(Y,\mathfrak{s})$ and not on the 4-manifold $X$ used in the construction. It is apparent that this is a lift of the Rochlin invariant.

We consider the $\overline{\mu}-$invariant for  Seifert manifolds with spin structures which extend over 4-manifolds with simple rational homology. The key result is Furuta's 10/8 theorem. For Seifert rational homology spheres the $\overline{\mu}$-invariant is known to be a spin rational homology cobordism invariant \cite{Ue} (see also \cite{savelievmubarzhs} for integer homology spheres), which is proved using a V-manifold version of the ${10}/{8}$ theorem \cite{FF}.

Here, we will give an alternative argument which is applicable for the cases we are most interested in, including some with positive first Betti number. Our approach is similar to \cite{bohr-lee}, which derives a knot sliceness obstruction from Furuta's theorem.

\begin{Thm}[Furuta \cite{furuta}]
Let $W$ be a closed, spin, smooth 4-manifold with an indefinite intersection form. Then $$4b_2(W) \geq 5 |\sigma(W)| +8.$$
\end{Thm}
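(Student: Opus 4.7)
The plan is to follow Furuta's original argument via $\operatorname{Pin}(2)$-equivariant Seiberg-Witten theory. Because $W$ is spin, the standard $U(1)$-symmetry of the Seiberg-Witten equations enhances: the spinor bundle $S^+$ admits a quaternionic structure compatible with the gauge action, so the configuration space acquires an action of $\operatorname{Pin}(2) = S^1 \cup j\,S^1 \subset \operatorname{Sp}(1)$, with $j$ acting as charge-conjugation on spinors and by $-1$ on imaginary self-dual $2$-forms. Fixing an orientation so that $\sigma(W) \leq 0$, Rokhlin's theorem gives $\sigma(W) \in 16\mathbb{Z}$, and the Atiyah-Singer index formula yields $\operatorname{ind}_{\mathbb{H}} D = -\sigma(W)/16$ for the Dirac operator $D$. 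The space of self-dual $2$-forms $\Omega^{2+}(W)$, on which $S^1$ acts trivially and $j$ acts by $-1$, decomposes as $b_2^+(W)$ copies of the sign representation $\widetilde{\mathbb{R}}$ of $\operatorname{Pin}(2)/S^1 = \mathbb{Z}/2$.

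Next I would invoke Furuta's finite-dimensional approximation. Truncating to sufficiently large finite-dimensional spectral subspaces of the linearised Seiberg-Witten operator and projecting the nonlinear part produces, after stabilisation by a common trivial summand $V$, a $\operatorname{Pin}(2)$-equivariant based continuous map of representation spheres
$$f : (k\,\mathbb{H} \oplus V)^+ \longrightarrow (\ell\,\widetilde{\mathbb{R}} \oplus V)^+,$$
where $k = -\sigma(W)/16$ and $\ell = b_2^+(W)$. The uniform a priori bounds on Seiberg-Witten solutions together with the absence of nontrivial reducibles guarantee that this map is defined away from the unique reducible fixed point and carries nontrivial mapping degree on the relevant fixed-point loci.

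The crux, and the main obstacle, is to extract the inequality $\ell \geq 2k + 1$ from the existence of such an equivariant map when $k \geq 1$. Furuta established this via $\operatorname{Pin}(2)$-equivariant $KO$-theory: the quaternionic Bott class associated to $\mathbb{H}$ combines with the torsion structure of $\widetilde{KO}^\ast_{\mathbb{Z}/2}(S^{\ell\widetilde{\mathbb{R}}})$ to obstruct any such $f$ unless $\ell$ exceeds $2k$. The purely $S^1$-equivariant theory only yields $\ell \geq k$, which recovers the classical $b_2^+(W) \geq -\sigma(W)/8$; the improvement by one is the entire content of the $10/8$ versus $8/8$ gap and relies crucially on the extra involution $j$ provided by the spin structure.

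Finally, substituting $\ell \geq 2k + 1$ gives $b_2^+(W) \geq -\sigma(W)/8 + 1$, and then $b_2(W) = 2b_2^+(W) - \sigma(W) \geq -5\sigma(W)/4 + 2$, which rearranges to $4b_2(W) \geq 5|\sigma(W)| + 8$. The indefiniteness hypothesis enters in two ways: it ensures $b_2^+(W) > 0$ so that the Furuta argument applies nontrivially, and it covers the separate case $\sigma(W) = 0$, where evenness of the spin intersection form plus unimodularity forces the smallest indefinite form to be the rank-two hyperbolic form, giving $b_2(W) \geq 2$ and hence $4b_2(W) \geq 8 = 5|\sigma(W)| + 8$.
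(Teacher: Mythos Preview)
The paper does not prove this statement at all: it is quoted as a black-box result from Furuta's paper \cite{furuta} and then applied. Your proposal, by contrast, sketches Furuta's original $\operatorname{Pin}(2)$-equivariant Seiberg--Witten argument, and the outline is essentially correct --- the enhancement of symmetry from $S^1$ to $\operatorname{Pin}(2)$ via the quaternionic structure on spinors, the finite-dimensional approximation producing an equivariant map of representation spheres, the equivariant $K$-theoretic obstruction yielding $b_2^+ \geq 2k+1$, and the final arithmetic are all as in Furuta. The separate treatment of $\sigma(W)=0$ via the classification of even indefinite unimodular forms is also the standard way to cover that edge case. So your sketch is a faithful summary of the literature proof, but it goes well beyond what the paper itself supplies; for the purposes of this paper the theorem is simply cited.
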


Note that, by Donaldson's diagonalisation theorem, a closed, smooth, spin manifold $W$ can have a definite intersection form only if $b_2(W)=0$.
\begin{Lem}\label{spinnumbers}Let $(Y,\mathfrak{s})$ be a 3-manifold with a chosen spin structure. Suppose that $(X,\mathfrak{s}_X)$ is a spin 2-handlebody and $(V,\mathfrak{s}_V)$ is a spin manifold with $b_3(V)=0$ such that $\partial (X,\mathfrak{s}_X) = \partial (V,\mathfrak{s}_V) = (Y,\mathfrak{s})$.

Then $W=X \cup_Y -V$ is spin with signature $\sigma(W) = \sigma(X) + \sigma(V)$ and $b_2(W) = b_2(X) + \chi(V) -1$.
\end{Lem}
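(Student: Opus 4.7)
The plan is to verify the three conclusions in turn. First, since $\mathfrak{s}_X$ and $\mathfrak{s}_V$ both restrict to $\mathfrak{s}$ on $Y$, the corresponding principal $\mathrm{Spin}(4)$-bundles agree on a collar of $Y$ and patch together to give a spin structure on the closed manifold $W$ by the standard gluing lemma for spin structures along a common boundary. For the signature, I would apply Novikov additivity of signatures under cutting along a $3$-submanifold, which (with the orientation conventions implicit in the notation $X \cup_Y -V$) immediately yields $\sigma(W) = \sigma(X) + \sigma(V)$.

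The main content is the Betti-number calculation, which I would carry out via Euler characteristics. Since $Y$ is a closed oriented $3$-manifold, $\chi(Y) = 0$, so inclusion-exclusion gives $\chi(W) = \chi(X) + \chi(V)$. The $2$-handlebody structure on $X$ forces $H_1(X;\mathbb{Q}) = H_3(X;\mathbb{Q}) = 0$, so $\chi(X) = 1 + b_2(X)$. For the closed oriented $4$-manifold $W$, Poincar\'e duality gives $\chi(W) = 2 - 2b_1(W) + b_2(W)$. Combining these,
\[
b_2(W) = b_2(X) + \chi(V) - 1 + 2b_1(W),
\]
so the claimed formula reduces to proving $b_1(W;\mathbb{Q}) = 0$.

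For this, I would leverage the hypothesis $b_3(V) = 0$ via Poincar\'e--Lefschetz duality: $H_3(V;\mathbb{Q}) \cong H^1(V, Y;\mathbb{Q})$, and rational universal coefficients then give $H_1(V, Y;\mathbb{Q}) = 0$. The long exact sequence of the pair $(V, Y)$ shows the inclusion-induced map $H_1(Y;\mathbb{Q}) \to H_1(V;\mathbb{Q})$ is surjective. Running the Mayer--Vietoris sequence for the cover $\{X, V\}$ of $W$, and using that $X$, $V$, $W$, $Y$ are connected so that $H_0(Y) \to H_0(X) \oplus H_0(V)$ is injective (forcing the connecting map $H_1(W) \to H_0(Y)$ to vanish), one identifies $H_1(W;\mathbb{Q})$ with $\operatorname{coker}(H_1(Y;\mathbb{Q}) \to H_1(V;\mathbb{Q}))$, which vanishes by the previous step.

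The spin structure and signature claims are essentially formal consequences of standard gluing and Novikov additivity; the real work is the Betti-number bookkeeping, where the key input is the Poincar\'e--Lefschetz step that extracts surjectivity of $H_1(Y) \to H_1(V)$ from the hypothesis $b_3(V) = 0$. Once this surjectivity is in hand, both the Mayer--Vietoris identification of $H_1(W)$ and the subsequent Euler-characteristic arithmetic are routine.
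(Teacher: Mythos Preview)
Your proof is correct and follows essentially the same approach as the paper: gluing of spin structures, Novikov additivity for the signature, and an Euler-characteristic count reduced to showing $b_1(W)=0$ using the key input $H_1(V,Y;\mathbb{Q})=0$ coming from $b_3(V)=0$ via Poincar\'e--Lefschetz duality. The only cosmetic difference is that the paper deduces $b_1(W)=0$ from excision $H_1(W,X;\mathbb{Q})\cong H_1(V,Y;\mathbb{Q})=0$ together with the long exact sequence of the pair $(W,X)$, whereas you route the same information through Mayer--Vietoris; the two arguments are equivalent.
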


\begin{proof}The fact that $W$ is spin follows since the spin structures on $X$ and $V$ agree on the boundary.

It is easy to see that $\chi(W) = \chi(X) + \chi(V)=1+b_2(X) +\chi(V)$.

Since $H_1(W,X;\mathbb{Q}) \cong H_1(V,Y;\mathbb{Q})=0$ it follows from the exact sequence for the pair $(W,X)$ that $b_1(W)=0$. The result now follows from the calculation of the Euler characteristic and Novikov additivity.
\end{proof}

To get an obstruction to a 3-manifold $Y$ with $b_1(Y) \leq 1$ embedding in $S^4$, we consider the case where $V$ is one of the spin pieces obtained from the splitting induced by an embedding.

\begin{Cor}\label{10/8cons}Let $(Y,\mathfrak{s})$ be a spin 3-manifold and let $(V,\mathfrak{s}_V)$ be a spin manifold and $(X,\mathfrak{s}_X)$ be a spin 2-handlebody with common boundary $(Y,\mathfrak{s})$.
\begin{enumerate} \item If $V$ is a rational ball then either $X=D^4$ or $$4 b_2(X) \geq 5|\sigma(X)| + 8;$$
\item If $H_*(V;\mathbb{Q}) = H_*(S^1 ;\mathbb{Q})$ then either $b_2(X)=1$ or $$4 b_2(X) \geq 5|\sigma(X)| + 12;$$
\item If $H_*(V;\mathbb{Q}) = H_*(S^2 ;\mathbb{Q})$ then $$4 b_2(X) \geq 5|\sigma(X)+ \sigma(V)| + 4.$$
\end{enumerate}
\end{Cor}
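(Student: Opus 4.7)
The plan is to form $W = X \cup_Y -V$ in each case, apply Lemma~\ref{spinnumbers} to read off $\sigma(W)$ and $b_2(W)$ from the hypotheses on $V$, and then feed the answer into Furuta's $10/8$ inequality. The spin structures $\mathfrak{s}_X$ and $\mathfrak{s}_V$ agree along $(Y,\mathfrak{s})$, so $W$ is a closed spin $4$-manifold, and Lemma~\ref{spinnumbers} gives $\sigma(W)=\sigma(X)+\sigma(V)$ together with $b_2(W)=b_2(X)+\chi(V)-1$. One subtlety to track in each case is the possibility that $W$ is definite: a closed smooth spin $4$-manifold whose intersection form is definite must have $b_2=0$, because Donaldson's theorem forces diagonalisability and the only even diagonalisable form is trivial.

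In case (1) we have $\sigma(V)=0$ and $\chi(V)=1$, so $\sigma(W)=\sigma(X)$ and $b_2(W)=b_2(X)$. If $W$ is definite, the remark above gives $b_2(X)=0$; since $X$ is a $2$-handlebody this forces $X=D^4$. Otherwise $W$ is indefinite and Furuta's theorem yields $4b_2(X)\ge 5|\sigma(X)|+8$. In case (2) we have $\sigma(V)=0$ and $\chi(V)=0$, so $\sigma(W)=\sigma(X)$ and $b_2(W)=b_2(X)-1$. The definite alternative now reads $b_2(X)=1$, while the indefinite alternative gives $4(b_2(X)-1)\ge 5|\sigma(X)|+8$, i.e.\ $4b_2(X)\ge 5|\sigma(X)|+12$.

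In case (3) we have $\chi(V)=2$, so $b_2(W)=b_2(X)+1\ge 1$. Thus $W$ cannot be definite (that would force $b_2(W)=0$), and Furuta's inequality applies directly: $4(b_2(X)+1)\ge 5|\sigma(X)+\sigma(V)|+8$, which rearranges to $4b_2(X)\ge 5|\sigma(X)+\sigma(V)|+4$.

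There is no real obstacle beyond this bookkeeping; the only place one must be slightly careful is in noting that the definite case of a closed spin $4$-manifold is killed by Donaldson's theorem (so that Furuta's indefinite hypothesis is either available or the conclusion collapses to an explicit small-$b_2$ statement like $X=D^4$ or $b_2(X)=1$), and in observing that in (3) this collapse cannot occur because $b_2(W)$ is automatically positive.
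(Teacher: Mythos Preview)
Your proof is correct and follows exactly the approach the paper takes: form $W=X\cup_Y -V$, apply Lemma~\ref{spinnumbers}, and feed the result into Furuta's theorem, handling the definite exception via Donaldson's theorem (which the paper notes just before the corollary). You have simply written out the case-by-case bookkeeping that the paper leaves implicit.
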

\begin{proof}
We apply Furuta's theorem and Lemma \ref{spinnumbers} to the closed, spin manifold $W=X \cup_Y -V$.
\end{proof}

We now construct spin 4-manifolds bounding double branched covers of pretzel links.
\begin{Prop}\label{spins}
Let $Y$ be the double branched cover of a $3$ or $4$-stranded pretzel link and let $\mathfrak{s}$ be a spin structure on $Y$. Then there is a spin 2-handlebody $(X,\mathfrak{s}_X)$ with spin boundary $(Y,\mathfrak{s})$ with signature $\sigma(X)=\overline{\mu}(Y,\mathfrak{s})$ and $0 \leq b_2(X) - |\sigma(X)| \leq 4$.
\end{Prop}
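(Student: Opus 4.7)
The plan is to construct $X$ by starting with the negative (semi-)definite plumbing $X_0$ from Proposition \ref{definites} bounding $Y$, and modifying it via Kirby moves so that the characteristic sublink corresponding to $\mathfrak{s}$ becomes empty. The boundary stays equal to $Y$, and once the characteristic sublink is empty the spin structure extends uniquely to $X$; by Neumann--Siebenmann we have $\overline{\mu}(Y,\mathfrak{s}) = \sigma(X) - w_X \cdot w_X = \sigma(X)$, giving the required signature.

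First I would describe the Wu set of $\mathfrak{s}$ in the star-shaped plumbing of $X_0$. Each leg is either a single vertex of weight $-a_i$ (when $b_i = 1$) or a chain of $(-2)$-weighted vertices of length $a_i - 1$ (when $b_i = -1$). On a chain of $(-2)$'s the characteristic equations $\sum_{v' \sim v} w_{v'} \equiv 0 \pmod{2}$ force the coefficients into an alternating pattern, with the leaf condition pinning one parity class to $0$. A parity analysis at the central vertex and at each single-vertex leg then shows that the Wu set may be taken to be supported only on the central vertex together with at most one vertex per leg; handle slides absorbing adjacent pairs of Wu vertices on a $(-2)$-chain (which preserve $b_2$ and $\sigma$) reduce the support to at most $n+1 \leq 5$ vertices.

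Next I would apply Kaplan's algorithm to remove the remaining Wu components. For each component one blows up a $\pm 1$-framed meridian, slides the Wu vertex over it to change the parity of its framing, and then blows down. A $(+1)$-blow-up followed by a $(+1)$-blow-down leaves $(b_2,\sigma)$ unchanged, and similarly for $(-1)$-pairs, but each blow-up that cannot be cancelled by a matching blow-down contributes $(+1,\pm 1)$ to $(b_2,\sigma)$, and so at most $+2$ to $b_2 - |\sigma|$. For $X_0$ itself we have $b_2(X_0) - |\sigma(X_0)| = 0$ in the $3$-strand case and $b_2(X_0) - |\sigma(X_0)| = 1$ in the $4$-strand case (where $X_0$ carries a single $0$-eigenvalue coming from the extra $1$-handle added to produce $b_1(Y) = 1$, cf.\ Figures \ref{baset2} and \ref{baserp2}).

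The main obstacle is the case analysis needed to obtain the sharp bound $b_2(X) - |\sigma(X)| \leq 4$ rather than the weaker bound $\leq 2(n+1)$ coming from the naive count of one uncancelled blow-up per Wu component. The improvement should come from observing that several Wu vertices supported on different legs can be eliminated using a single shared meridian (by sliding them together before blowing down), and that the alternating pattern on a $(-2)$-chain can be killed using handle slides alone with no blow-up at all. A case-by-case analysis according to the number of legs carrying a nontrivial Wu coefficient, the parities of the $a_i$, and whether the central framing is even or odd will then pin down the sharp constant $4$.
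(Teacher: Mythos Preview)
Your approach has a genuine gap: you start from the wrong $4$-manifold. The star-shaped plumbing $X_0$ has $b_2(X_0)$ growing with the pretzel parameters (each leg coming from a Seifert invariant $(a_i,-1)$ is a chain of $a_i-1$ vertices of weight $-2$), whereas the bound you are trying to establish is absolute. Handle slides do not change $b_2$, and the Kaplan moves you describe (blow-ups and blow-downs) cannot bring $b_2$ down below $b_2(X_0)$ minus the number of blow-downs you perform; but blowing down is only available when a $\pm 1$-framed unknot is present, and producing such unknots from the plumbing requires roughly as many blow-ups as blow-downs. Concretely, for $Y(a,-a,a)$ the plumbing has $b_2(X_0)$ of order $2a$, while some spin structures have $\overline{\mu}$ of order $a$ (or even $0$); your procedure, starting from $X_0$, will end with $b_2(X)-|\sigma(X)|$ of order $a$, not bounded by $4$. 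Your accounting ``one blow-up plus one blow-down per Wu component'' is also not what Kaplan's algorithm does: removing a single characteristic unknot of framing $n$ requires $|n|-1$ blow-ups before the final blow-down, and $|n|$ can be large.

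The paper avoids this entirely by replacing $X_0$ with a tiny $2$-handlebody $X'$ having only two (for $n=3$) or three (for $n=4$) $2$-handles, obtained from the surgery picture in Figure~\ref{pretzelc} by a handle slide and a cancellation (see Figure~\ref{spinpic}). Because $b_2(X')\le 3$, one already has $|\sigma(X')|\le 3$, and after sliding the characteristic sublink down to a single unknot of framing~$n$ the case analysis on the sign of $n$ is short and yields $b_2(X)-|\sigma(X)|\le 4$ directly. The essential idea you are missing is not a sharper count of Kaplan moves on the plumbing, but the passage to this small handlebody $X'$ in the first place.
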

\begin{proof}
Let $X'$ be one of the 2-handlebodies shown in Figure \ref{spinpic}. The boundary is the same as the 2-handlebodies pictured in Figure \ref{pretzelc} -- we can slide over the component with framing $a$ and then exchange the $0$ framed unknot for a $1$-handle and cancel. 

\begin{figure}[htbp] 
\begin{center}
\ifpic
\def\svgwidth{10cm}
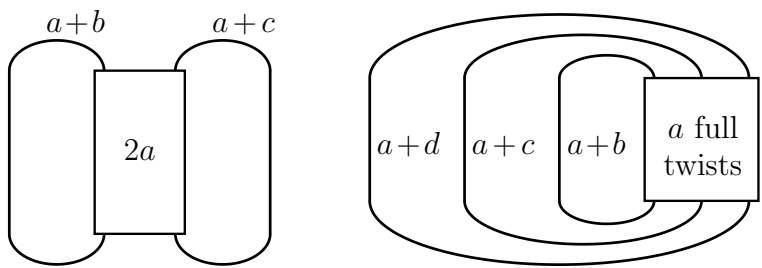
\else \vskip 5cm \fi
\begin{narrow}{0.3in}{0.3in}
\caption{
\bf{$X'$ for $n=3,4$.}}
\label{spinpic}
\end{narrow}
\end{center}
\end{figure}


Every sublink of $X'$ is potentially characteristic, depending on $a,b,c$ and $d$. For each spin structure $\mathfrak{s}$ on $\partial X'$ we can arrange by handleslides that the characteristic sublink is an unknot as follows. If the sublink containing the two components of framings $a+b$ and $a+c$ is characteristic we can slide one over the other to get a single unknotted component with framing $b+c$.
In the 4-strand case, there may be a characteristic sublink with three components. If we perform the handle slide above, the resulting picture has a characteristic unlink. It is then obvious that we can slide one component over the other.

This gives a new diagram for $X'$ in which the characteristic sublink is an unknot with framing $n$. The $\overline{\mu}$ invariant of $(Y,\mathfrak{s})$ is $\sigma(X') - n$. This can easily be verified using the above description of the handle moves needed to convert the plumbing tree to $X'$.

By reversing the orientation of $X'$ if necessary, we may assume $\sigma(X') \geq 0$. Note that since $X'$ has only a small number of handles, $\sigma(X')\leq 3$. 
We now consider various cases depending on the sign of $n$.

If $n=0$ then we can remove the characteristic sublink by blowing up a $+1$ meridian of it and then blowing down the resulting $+1$ framed curve. This gives an $X$ with signature $\overline{\mu}(Y,\mathfrak{s})$ and $b_2(X) = b_2(X') \leq 3$.

If $n<0$ then the characteristic sublink can be removed by blowing up $|n|-1$ meridians with framing $+1$ and then blowing down the resulting $-1$ curve. This produces a spin manifold $X$ with $\sigma(X) = \sigma(X') -n$ and $b_2(X)=b_2(X') + |n| -2$.

By the assumptions on $X'$ and $n$, $\sigma(X) >0$ so \begin{align*}b_2(X) - |\sigma(X)| &= b_2(X') - n -2 -\sigma(X') +n\\ &= b_2(X') - \sigma(X')-2.\end{align*}
This is at most $1$.

If $n>0$, the characteristic sublink can be removed by blowing up a $-1$-framed meridian of the characteristic link $n-1$ times and blowing down a $+1$ curve. This gives a spin manifold $X$ with $\sigma(X) = \sigma(X') - n$ and $b_2(X) = b_2(X') +n-2$.

If $\sigma(X)\geq 0$ then necessarily $n \leq 3$. Then $b_2(X) \leq b_2(X') +1 \leq 4$. Alternatively, if $\sigma(X) <0$ then \begin{align*}b_2(X) - |\sigma(X)| &= b_2(X')+n-2 + \sigma(X') -n \\ &= b_2(X') + \sigma(X') -2.\end{align*}This is, again, at most $4$.

\end{proof}

We can apply Corollary \ref{10/8cons} to produce the following conclusions.
\begin{Cor}\label{mubar}
Let $Y$ be the double branched cover of a 3 or 4 stranded pretzel link with $k$ components. If $Y$ embeds in $S^4$ then the Neumann-Siebenmann $\overline{\mu}$ invariant vanishes for at least $2^{\frac{k+1}{2}} - 1$ spin structures on $Y$ if $k$ is odd and at least $3(2^{\frac{k-2}{2}}) -1$ if $k$ is even.

\end{Cor}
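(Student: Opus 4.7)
The plan is to count, from below, the spin structures on $Y$ that extend over one of the two pieces produced by an embedding, and then to use Furuta's $10/8$ theorem together with Rochlin's congruence to force $\overline{\mu}$ to vanish on each of them.

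An embedding produces a splitting $S^4 = U \cup_Y -V$. First I would run the Mayer--Vietoris argument of Lemma \ref{basichom} with $\mathbb{Z}/2$ coefficients, yielding
$$H^1(Y;\mathbb{Z}/2) \cong H^1(U;\mathbb{Z}/2) \oplus H^1(V;\mathbb{Z}/2),$$
with summands realised by restriction. The spin structures on $Y$ extending over $U$ form a coset of $H^1(U;\mathbb{Z}/2)$ of size $|H^1(U;\mathbb{Z}/2)|$, and similarly for $V$; since the two cosets meet only in the spin structure inherited from $S^4$, the number of spin structures on $Y$ that extend over at least one of $U$, $V$ is $|H^1(U;\mathbb{Z}/2)| + |H^1(V;\mathbb{Z}/2)| - 1$.

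To evaluate this, recall from Lemma \ref{linkcomplem} that $b_1(Y)=0$ when $k$ is odd and $b_1(Y)=1$ when $k$ is even. If $k$ is odd, both $U$ and $V$ are rational balls, and Lemma \ref{basichom}(2) combined with universal coefficients gives $H_1(V;\mathbb{Z}) \cong H^2(U;\mathbb{Z}) \cong H_1(U;\mathbb{Z})$; therefore $|H^1(U;\mathbb{Z}/2)| = |H^1(V;\mathbb{Z}/2)| = 2^{(k-1)/2}$ and the count is $2^{(k+1)/2} - 1$. If $k$ is even, Mayer--Vietoris forces (after relabelling) $U$ to be a rational homology $S^2$ and $V$ a rational homology $S^1$; the same duality step now yields $H_1(V;\mathbb{Z}) \cong H_1(U;\mathbb{Z}) \oplus \mathbb{Z}$, so $|H^1(U;\mathbb{Z}/2)| = 2^{(k-2)/2}$ and $|H^1(V;\mathbb{Z}/2)| = 2^{k/2}$, giving $3 \cdot 2^{(k-2)/2} - 1$.

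For each such $\mathfrak{s}$, I would use Proposition \ref{spins} to produce a spin 2-handlebody $(X,\mathfrak{s}_X)$ with $\sigma(X) = \overline{\mu}(Y,\mathfrak{s})$ and $b_2(X) \leq |\sigma(X)| + 4$. Applying the case of Corollary \ref{10/8cons} that matches the rational homology of the piece over which $\mathfrak{s}$ extends (and using $\sigma(U) = \sigma(V) = 0$ from Lemma \ref{basichom}(4)), substituting $b_2(X) \leq |\sigma(X)| + 4$ into Furuta's inequality produces the uniform bound $|\sigma(X)| \leq 12$. Since $W = X \cup_Y -V$ is closed and smoothly spin with $\sigma(W) = \sigma(X)$, Rochlin's theorem forces $\sigma(W) \in 16\mathbb{Z}$, and together with $|\sigma(X)| \leq 12$ this pins $\sigma(X) = 0$, i.e.\ $\overline{\mu}(Y,\mathfrak{s}) = 0$. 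The main obstacle is that Furuta's $10/8$ alone confines $\sigma(X)$ only to a small window around zero; the decisive extra ingredient is Rochlin's mod~$16$ divisibility, which together with that window forces $\sigma(X)$ to be exactly $0$.
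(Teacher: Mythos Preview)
Your proof is correct and follows essentially the same route as the paper. Both arguments count the spin structures that extend over one of the two pieces via the $\mathbb{Z}/2$ Mayer--Vietoris splitting of $H^1(Y;\mathbb{Z}/2)$, then feed the spin $2$-handlebody of Proposition~\ref{spins} into Corollary~\ref{10/8cons} to bound $|\overline{\mu}|\le 12$, and finish with Rochlin's mod~$16$ congruence. Your treatment of the even-$k$ case is slightly more explicit than the paper's (you identify the two pieces as a rational $S^2$ and a rational $S^1$ and deduce $H_1(V)\cong \mathbb{Z}\oplus H_1(U)$ directly), whereas the paper simply invokes the $2^{b_1(Y)}l^2$ count from Lemma~\ref{linkcomplem}; the content is the same.
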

\begin{proof}
Since $Y$ embeds smoothly in $S^4$ we can write $S^4=U \cup_Y -V$. Since $Y$ is the double branched cover of a pretzel link $b_1(Y) \leq 1$. Lemma \ref{basichom} implies that for both $U$ and $V$ the sum of the first and second Betti number is at most one.

For every spin structure $\mathfrak{s}$ extending over either $U$ or $V$ we apply Corollary \ref{10/8cons} to the 2-handlebody $X$ given by Proposition \ref{spins}. This shows that $$4b_2(X) \geq 5 |\overline{\mu}(Y,\mathfrak{s})| + 4.$$ Since $b_2(X) \leq |\overline{\mu}(Y,\mathfrak{s})|+4$ we see that $|\overline{\mu}(Y,\mathfrak{s})| \leq 12$.

Since $U$ and $V$ both have signature zero, it follows from Rochlin's theorem that the $\overline{\mu}$ invariant vanishes for every spin structure extending over $U$ or $V$.

The proof of Lemma \ref{linkcomplem} shows that the total number of spin structures on $Y$ is $2^{b_1(Y)}l^2$ where $2^{b_1(Y)}l$ spin structures extend over $U$ and $l$ extend over $V$. Exactly one extends over both to give the unique spin structure on $S^4$. The result now follows since $b_1(Y)$ is determined by the parity of $k$.
\end{proof}





\section{Double branched covers of pretzel links}\label{pretzelcovers}
The proof of Theorem \ref{pretzelcover34} will use a combination of the obstructions from Sections \ref{diagonalisation} and \ref{spin}.
Recall that $Y(a,b,c)$ and $Y(a,b,c,d)$ denote the double branched covers of $P(a,b,c)$ and $P(a,b,c,d)$ respectively. All of the positive embedding results are demonstrated in Section \ref{construction}. This section will complete the proof by outlining the necessary obstructions.

It will be convenient to use Corollary \ref{mubar} as our principal obstruction. Accordingly, we consider cases with different numbers of spin structures separately. By Lemma \ref{linkcomplem} this is equivalent to splitting up into cases according to the number of link components.

We first consider the cases with first Betti number one. Note that these fall under the hypothesis of Theorem \ref{e=0} and so every example of this type which embeds smoothly in $S^4$ is of the form $Y(a,-a,b,-b)$. 

\begin{Prop}Suppose that $a>b>0$ and $a,b$ are both even. Then $Y=Y(a,-a,b,-b)$ does not embed smoothly in $S^4$.
\end{Prop}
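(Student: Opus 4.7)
The plan is to derive a contradiction from Corollary \ref{mubar} by counting spin structures on which $\overline{\mu}$ vanishes. First I would argue that the pretzel link $P(a,-a,b,-b)$ has four components (so $Y$ carries $2^{k-1}=8$ spin structures)---this is visible either by tracing components in the standard pretzel picture with all entries even, or more economically by counting the characteristic sublinks below. Corollary \ref{mubar} then requires at least $3\cdot 2^{(k-2)/2}-1=5$ of these spin structures to have $\overline{\mu}=0$.

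To compute $\overline{\mu}$ I would use the surgery $2$-handlebody $X$ of Figure \ref{pretzelc}: the central $0$-framed unknot $L_0$ linked once with four unknots $L_1,\ldots,L_4$ of framings $a,-a,b,-b$. Every framing is even, so $X$ has an even intersection form and is spin, and $X$ is a star-shaped plumbing (with central weight $0$), which suffices for the Neumann--Siebenmann formula $\overline{\mu}(Y,\mathfrak{s})=\sigma(X)-w\cdot w$ to apply even though $X$ is not definite. Changing basis from $e_1,\ldots,e_4$ to $e_1\pm e_2,\ e_3\pm e_4$ exhibits two hyperbolic summands with off-diagonal entries $2a$ and $2b$, with the remaining null direction involving $e_0$ contributing nothing, whence $\sigma(X)=0$.

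The characteristic condition $\operatorname{lk}(L_i,L')\equiv(\text{framing of }L_i)\pmod 2$ then forces $L_0\notin L'$ and $|L'\cap\{L_1,\ldots,L_4\}|$ to be even, producing the expected $\binom{4}{0}+\binom{4}{2}+\binom{4}{4}=8$ Wu sets (confirming $k=4$). A short calculation of $w\cdot w$ on each gives $0$ for the four \emph{balanced} sublinks $\emptyset$, $\{L_1,L_2\}$, $\{L_3,L_4\}$, $\{L_1,L_2,L_3,L_4\}$ (the opposite framings cancel), and $\pm(a\pm b)$ for the four \emph{mixed} sublinks $\{L_i,L_j\}$ with $i\in\{1,2\}$, $j\in\{3,4\}$. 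Since $a>b>0$, all four values $\pm(a-b),\pm(a+b)$ are nonzero; hence exactly four spin structures on $Y$ have $\overline{\mu}=0$, contradicting the lower bound of five from Corollary \ref{mubar}, so $Y$ cannot embed smoothly in $S^4$.

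The main obstacle is really just bookkeeping: pinning down the pretzel component count $k=4$ and justifying the use of the Neumann--Siebenmann formula on the indefinite plumbing $X$; once these are in place the signature and $w\cdot w$ calculations are essentially immediate.
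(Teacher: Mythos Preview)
Your proposal is correct and follows essentially the same approach as the paper: both use the plumbing of Figure~\ref{pretzelc} to compute that exactly four of the eight spin structures have $\overline{\mu}=0$ (the remaining values being $\pm(a\pm b)$), and then invoke Corollary~\ref{mubar} to obtain a contradiction. Your write-up simply supplies the details the paper omits; the one minor imprecision is your basis-change argument for $\sigma(X)=0$ (the central class $e_0$ is not orthogonal to $e_1+e_2$ and $e_3+e_4$), but the conclusion is correct and follows, for instance, by successively clearing the off-diagonal entries against $e_1,\dots,e_4$ to diagonalize the form over $\mathbb{Q}$ as $\operatorname{diag}(0,a,-a,b,-b)$.
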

\begin{proof}An easy calculation using the plumbing in Figure \ref{pretzelc} shows that $Y$ has eight spin structures and that only four have vanishing $\overline{\mu}$ invariant. The others are $\pm(a\pm b)$. Corollary \ref{mubar} shows that these do not embed smoothly in $S^4$.
\end{proof}
\begin{Rmk}This demonstrates that Theorem \ref{e=0} does not give a complete obstruction.
\end{Rmk}

We now consider the double branched covers of links with odd numbers of components.
\subsection{Double branched covers of knots}

Due to interest in the question of knot sliceness, there are previous results we may appeal to. In particular, for pretzel knots, the possible form of subsets appearing in Theorem \ref{mainthm} have been computed \cite{GJ} \cite{Lec}. The $\overline{\mu}$ invariant is useful as an obstruction to a knot being slice since any 4-manifold with the $\mathbb{Z}/2$ homology of $D^4$ is necessarily spin. Indeed, for Montesinos knots the $\overline{\mu}$ invariant of the double branched cover agrees with the knot signature \cite{savelievmubar} and the resulting obstruction is incorporated into the results of \cite{GJ} and \cite{Lec}.

To begin with, we consider the double branched covers of 3-stranded pretzel knots. There are two cases to consider. We assume that $Y(a,b,c)$ has a positive generalised Euler characteristic and consider how many of $a,b,c$ are positive.

\begin{Prop}
Let $Y(a,b,c)$ be the double branched cover of a knot with $a,b>1$ and $e(Y)>0$. Then if $Y$ embeds smoothly in $S^4$, $c<0$ and $Y$ is diffeomorphic to $Y(a,-a,a)$.
\end{Prop}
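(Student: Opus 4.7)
Since $P(a,b,c)$ is a knot, $|H_1(Y)|=|ab+bc+ca|$ is odd, so $Y$ is a rational homology sphere with a unique spin structure. If $Y$ embeds in $S^4$, Corollary~\ref{diag0} applied to the standard negative definite plumbing $X$ of $-Y$ from Proposition~\ref{definites} yields matrices $A_1,A_2$ with $A_iA_i^t=-Q_X$ and column-space subgroups $H_1,H_2\subset\operatorname{coker}Q_X$ satisfying $H_1\cong H_2$ and $H_1\oplus H_2=\operatorname{coker}Q_X$. In particular $H_1(Y)\cong\operatorname{coker}Q_X$ must itself split as $G\oplus G$.

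A Smith normal form computation on the presentation matrix $\bigl(\begin{smallmatrix}a+b&a\\a&a+c\end{smallmatrix}\bigr)$ gives $H_1(Y)\cong\mathbb{Z}/g\oplus\mathbb{Z}/m$, where $g=\gcd(a,b,c)$ (or $\gcd(a,b,c')$ when $c=-c'$) and $gm=|ab+bc+ca|$; the $G\oplus G$ condition forces $g=m$. When $c\geq 2$, writing $a=ga'$, $b=gb'$, $c=gc'$ with $\gcd(a',b',c')=1$ reduces this to $a'b'+b'c'+c'a'=1$, which has no positive integer solutions. So $c\geq 2$ is ruled out immediately.

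Now assume $c=-c'$ with $c'\geq 2$. The standard plumbing $X$ of $-Y$ has central weight $-2$, two pendant $-2$-chains of lengths $a-1$ and $b-1$, and a single pendant vertex of weight $-c'$ (read from the continued fractions of $(a,a-1),(b,b-1),(c',1)$ with $r=2$). Set $v_0=e_1+e_2$. The two $-2$-chains embed rigidly on disjoint index sets $I_1\ni 1$ and $I_2\ni 2$ of sizes $a$ and $b$, and orthogonality to every chain vector forces the pendant row $v_3$ to be constant on $I_1\setminus\{1\}$ and on $I_2\setminus\{2\}$, say with values $\alpha$ and $\beta$. The conditions $v_3\cdot v_0=1$ and $|v_3|^2=c'$ read $\alpha+\beta=1$ and $a\alpha^2+b\beta^2=c'$. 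Since the embedding requires two distinct factorizations $A_1,A_2$ with $H_1\cap H_2=0$, the quadratic $(a+b)\alpha^2-2b\alpha+(b-c')=0$ must have two distinct integer roots; their sum $2b/(a+b)$ must then lie in $\mathbb{Z}$, which forces $a=b$. Returning to the group-theoretic condition with $a=b$, the $G\oplus G$ requirement reduces via a short computation to $c'=a$, yielding $Y\cong Y(a,-a,a)$.

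The main obstacle is combining the two obstructions. The group-theoretic condition $H_1(Y)\cong G\oplus G$ admits many solutions with $e(Y)>0$ (for instance $(a,b,c)=(3g,5g,-2g)$ or the integer homology sphere $Y(7,9,-4)$), so it does not by itself identify $Y(a,-a,a)$; equally, the combinatorial condition of two integer roots of the quadratic admits the family $c'=a(t^2+1)/2$ for odd $t$ once $a=b$ is known. Only the intersection of the two constraints cuts down to $a=b=c'$, so the argument depends on carefully combining the group and lattice content of Corollary~\ref{diag0} rather than using either half in isolation.
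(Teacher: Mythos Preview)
Your approach is genuinely different from the paper's and mostly works, but there is a real gap.

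\textbf{Comparison with the paper.} The paper rules out $c>0$ using the $\overline{\mu}$-invariant, and for $c<0$ it quotes Greene--Jabuka: their lattice analysis gives the same one-parameter family of embeddings you find (their $\lambda$ is your $-\alpha$), but then they invoke the $d$-invariant (Theorem~\ref{diagd}) to force $\lambda\in\{-1,0\}$, i.e.\ $c'\in\{a,b\}$. Only after that do they use Corollary~\ref{diag0} to demand a second subset. Your route bypasses the $d$-invariant entirely: you use Vieta on the quadratic to get $a=b$ from ``two integer roots,'' and then the $G\oplus G$ condition to pin down $c'=a$. This is more elementary and self-contained, which is a genuine advantage.

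\textbf{The gap.} Your sentence ``the embedding requires two distinct factorizations $A_1,A_2$'' is exactly where Remark~\ref{zhsf} bites. When $Y$ is an integer homology sphere, Corollary~\ref{diag0} is satisfied with $A_1=A_2$, so you only need \emph{one} integer root, and your Vieta argument collapses. You even flag $Y(7,9,-4)$ as a case where the group condition is vacuous, but you never say what obstructs it; your ``intersection of the two constraints'' does not touch it, since the two-root constraint is simply not in force there. The paper handles this by first restricting to $c'\in\{a,b\}$ via the $d$-invariant, after which $|H_1|=a^2>1$ automatically.

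\textbf{The fix.} In the homology sphere case your own quadratic does the job without Vieta. If $ab-c'(a+b)=\pm 1$, substituting $c'=(a+b)\alpha^2-2b\alpha+b$ gives $ab-c'(a+b)=-((a+b)\alpha-b)^2$, so the discriminant is $4$ exactly when $c'(a+b)-ab=1$, and the two (real) roots are $(b\pm 1)/(a+b)$. For $a\ge 2$ these lie strictly between $0$ and $1$, so there is no integer root at all and the single Donaldson obstruction already rules out every homology sphere with $a,b>1$. Insert this as a separate case before invoking ``two distinct roots'' and your argument is complete. (Two smaller points: the plumbing you describe is for $Y$, not $-Y$; and the ``rigidity'' you assert is exactly Greene--Jabuka's Proposition~3.1, which takes some care---your sketch of it is correct in outline but glosses over why the two $-2$-chains must occupy disjoint coordinate blocks.)
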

\begin{proof}
First, note that if $c$ is also positive it is impossible to have a vanishing $\overline{\mu}$ invariant.

The case where $Y(a,b,c)$ is a $\mathbb{Z}/2$ homology sphere with $a,b>0$ and $c<0$ is by Greene and Jabuka \cite{GJ}. Note that while they only explicitly consider the case where $a,b,c$ are odd, this is only important in their calculation of the knot signature and has no effect on their arguments using Donaldson's diagonalisation theorem or the $d$ invariant. Their Proposition 3.1 determines the possible subsets in this case to be uniquely determined up to a choice of a parameter $\lambda$ such that $-c = \lambda^2 a + (\lambda +1)^2 b$.

Greene and Jabuka use the $d$ invariant, in the way described in Theorem \ref{diagd}, to show that this $\lambda$ must be either $-1$ or $0$ if $Y$ is the boundary of a rational ball. This shows that $-c=a$ or $-c=b$. Note that $Y(a,b,-a)$ has first homology of order $a^2$ so it can only be a homology sphere if it is $S^3$. Otherwise, we may apply Corollary \ref{diag0} to show that there must be a second subset. This means that both $\lambda=0$ and $\lambda=-1$ must be valid. It follows that $a=b=-c$.
\end{proof}

Now we consider the case where $Y(a,b,c)$ has just $a$ positive.

\begin{Prop}
Let $Y=Y(a,b,c)$ be the double branched cover of a knot with $a>1$, $b,c<-1$ and $e(Y)>0$.
Then if $Y$ embeds smoothly in $S^4$ then it is a homology sphere of the form $Y(2\lambda-1,-2\lambda-1,-2\lambda^2)$.
\end{Prop}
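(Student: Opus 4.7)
The plan is to mimic the proof of the preceding proposition, adapted to the new sign pattern (where two of the three strands are negative). After normalising the Seifert invariants $(-b, -1) \mapsto (-b, -b-1)$ and $(-c, -1) \mapsto (-c, -c-1)$, the standard negative-definite plumbing $X$ bounding $Y$ has a star-shaped weighted graph with central vertex of weight $-2$, one leg consisting of a single vertex of weight $-a$, and two further legs each consisting of a chain of $-2$-weighted vertices, of lengths $|b|-1$ and $|c|-1$ respectively. Since $P(a,b,c)$ is a knot, $Y$ is a $\mathbb{Z}/2$-homology sphere and $X$ has at most two bad points in the sense of \cite{OSPlumbed}, so Theorem \ref{diagd} applies.

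If $Y$ embeds in $S^4$, Corollary \ref{diag0} supplies matrices $A_1, A_2$ with $A_i A_i^t = -Q_X$ together with a splitting $\operatorname{coker}(Q_X) \cong H_1 \oplus H_2$. Each $A_i$ defines a subset of $\mathbb{D}^n$ whose intersection graph reproduces that of $X$. The two long $-2$-chains force the associated vectors to take the standard form $\pm(e_j - e_{j+1})$ for a suitable ordering of basis elements; the $-a$-leaf then forces its vector to be a signed sum of distinct basis elements; and the central vertex must link to the three prescribed endpoints in a compatible way. Following the combinatorial analysis of Greene--Jabuka in the adjacent case $a, b > 0$, $c < 0$ (\cite[Proposition 3.1]{GJ}), I expect the classification to yield a one-parameter family of subsets, indexed by an integer $\lambda$ that is related to the Seifert invariants by an explicit polynomial identity.

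Applying Theorem \ref{diagd}, every class of $H_i$ must admit a characteristic representative of the form $A_i x$ with $x \in \{\pm 1\}^n$. Since the embedding furnishes \emph{two} distinct subsets $A_1, A_2$ whose images together generate $\operatorname{coker}(Q_X)$, either both $H_i$ are trivial (so $Y$ is a $\mathbb{Z}$-homology sphere, forcing $|ab+bc+ca|=1$) or two distinct valid values of $\lambda$ must simultaneously satisfy the characteristic-vector constraint. The same combinatorial framework that enabled Greene--Jabuka to reduce their $\lambda$ to the two values $\{-1, 0\}$ should, in this sign pattern, cut the solution set down to precisely the family $Y(2\lambda - 1,\, -2\lambda-1,\, -2\lambda^2)$; the identity $(2\lambda-1)(-2\lambda-1) + (-2\lambda-1)(-2\lambda^2) + (-2\lambda^2)(2\lambda-1) = 1$ confirms directly that every member of this family is indeed an integral homology sphere.

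The main obstacle is the combinatorial classification of subsets in the second paragraph: the star-shaped graph has two long $-2$-arms (rather than the single one treated by Greene--Jabuka), so the Lisca-style analysis requires adapted bookkeeping to keep track of how the central vertex and the $-a$-leaf interact with both $-2$-chains simultaneously. Once this parametrisation by $\lambda$ is in hand, the application of the $d$-invariant and the enforcement of two compatible subsets reduces to routine arithmetic checks that should isolate exactly the claimed family.
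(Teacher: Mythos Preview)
Your proposal takes a genuinely different route from the paper, and the hardest step is left unexecuted.

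The paper does \emph{not} work with the standard star-shaped plumbing. Instead it first invokes Corollary~\ref{mubar}: since $P(a,b,c)$ is a knot, $Y$ has a unique spin structure, and requiring $\overline{\mu}(Y)=0$ forces (after relabelling) $c$ to be even and $b=-a-2$. With this relation in hand, the paper uses the rank-two 2-handlebody $X'$ of Figure~\ref{spinpic} (with framings $a+b=-2$ and $a+c$), checks that it is negative definite because $a+b=-2$, and applies Corollary~\ref{diag0} there. The factorisations $-Q_{X'}=A_iA_i^t$ are then $2\times 2$ matrices $A_i=\begin{pmatrix}1&1\\ \rho&\lambda\end{pmatrix}$, and up to automorphism of the column lattice there is only one such subset. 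By Remark~\ref{zhsf} this already forces $Y$ to be an integral homology sphere; the condition $\det A=\pm 1$ gives $\rho=\lambda-1$ and a short computation yields $a=2\lambda-1$, $b=-2\lambda-1$, $c=-2\lambda^2$. No $d$-invariant input is needed.

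Your plan omits the $\overline{\mu}$ step entirely and attacks the full star-shaped plumbing, which has two long $(-2)$-chains rather than the single chain handled by Greene--Jabuka. You correctly identify this as the main obstacle, but you only \emph{expect} a one-parameter family and do not carry out the classification; without the prior constraint $b=-a-2$ there is no reason the subsets should be governed by a single integer $\lambda$, and the bookkeeping you allude to is substantially harder than in \cite{GJ}. Even granting a classification, your subsequent appeal to Theorem~\ref{diagd} and the two-subset splitting is speculative: in the paper the conclusion ``$Y$ is a homology sphere'' comes for free from the uniqueness of the $2\times 2$ subset, not from comparing two admissible values of $\lambda$. In short, your outline is not wrong in spirit, but it bypasses the two simplifying ideas (the $\overline{\mu}$ reduction and the small $X'$) that make the paper's proof a few lines, and the step you leave open is precisely the one that would require real work.
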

\begin{proof}
By systematically checking other possibilities it is not difficult to verify that, in order to have a unique $\overline{\mu}$ invariant of zero, we must have $c$ even and $b=-a-2$ odd, up to relabeling $b$ and $c$.

Consider the 4-manifold $X'$ with boundary $Y$ shown in Figure \ref{spin3thing}, where $2a$ refers to the number of crossings.

\begin{figure}[htbp] 
\begin{center}
\ifpic
\def\svgwidth{4cm}
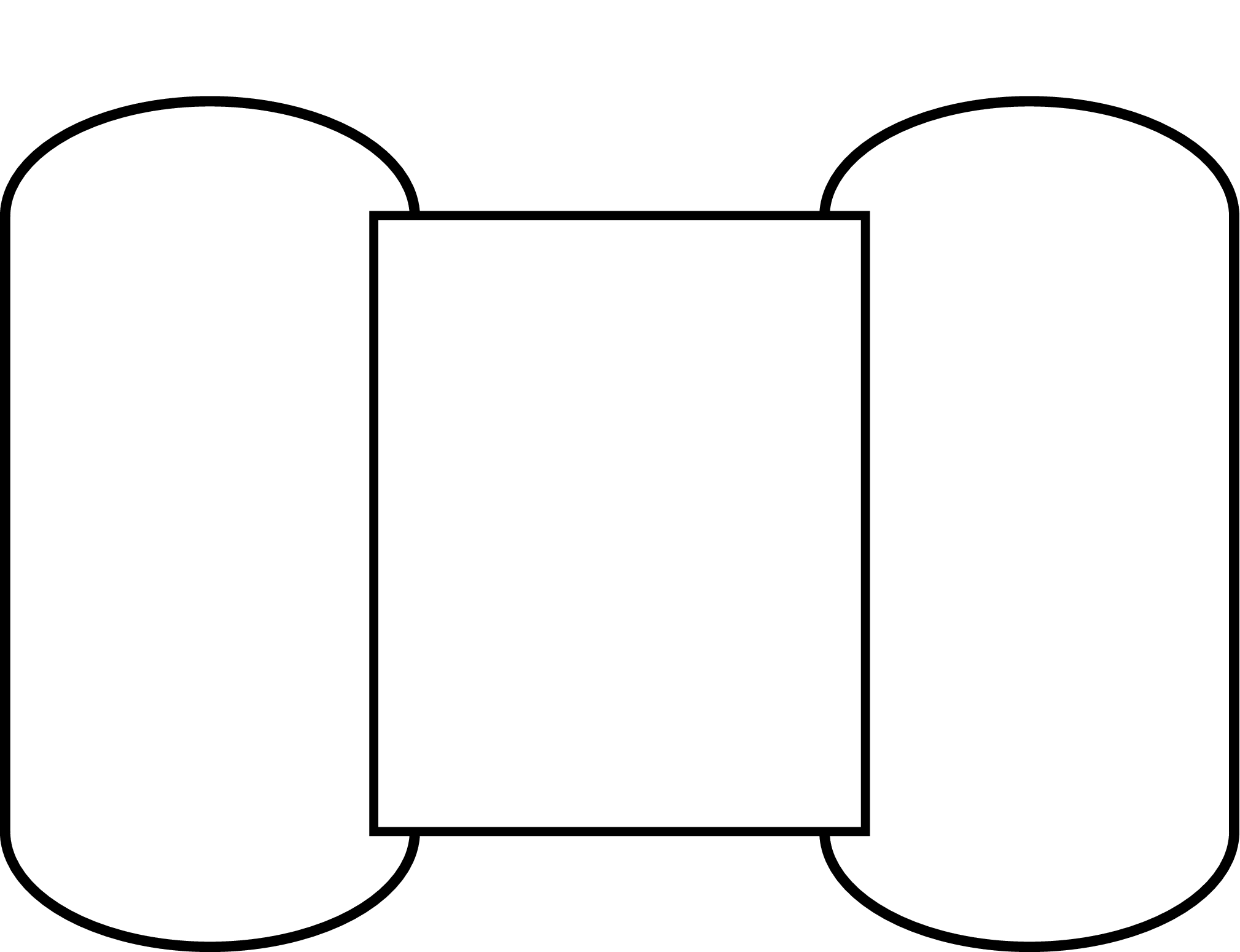
\else \vskip 5cm \fi
\begin{narrow}{0.3in}{0.3in}
\caption{
\bf{$X'$.}}
\label{spin3thing}
\end{narrow}
\end{center}
\end{figure}

The intersection form of $X'$ has determinant $ab+ac+bc >0$ so it is definite. Since $a+b=-2$ it must be negative definite.

The possible subsets we get from applying Corollary \ref{diag0} give matrices of the form $$A_i = \begin{pmatrix} 1 & 1 \\ \rho & \lambda \end{pmatrix}.$$

Up to a change of basis of the columns space this is unique. 
This means that there is only one subset. This provides an obstruction unless $Y$ is a homology sphere as noted in Remark \ref{zhsf}.
In this case we require that $\det A = \lambda - \rho =\pm 1$. Up to relabelling we can assume $\rho = \lambda -1$.

It then follows that $a=2\lambda -1$, $b=-2\lambda-1$ and $c=-2\lambda^2$.
\end{proof}

We now consider the double branched covers of 4-strand pretzel knots.
\begin{Prop}
Let $Y=Y(a,b,c,d)$ with $a,b,c,d \in \mathbb{Z}\backslash \{0\}$ be a $\mathbb{Z}/2$ homology sphere which embeds smoothly in $S^4$.
Then $Y$ embeds smoothly in $S^4$ if and only if it is diffeomorphic to $Y(a\pm 1,-a,a,-a)$.
\end{Prop}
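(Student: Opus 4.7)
The ``if'' direction is already in hand: the pretzel link $P(a\pm1,-a,a,-a)$ is the link $L_{a,\pm1}$ of Proposition \ref{lan}, hence doubly slice, so its double branched cover embeds by Lemma \ref{dsemb}. For the converse, suppose $Y=Y(a,b,c,d)$ is a $\mathbb{Z}/2$--homology sphere embedding smoothly in $S^4$. A quick computation from the surgery diagram of Figure \ref{pretzelc} gives $|H_1(Y;\mathbb{Z})|=|abc+abd+acd+bcd|$; this is odd iff exactly one of $a,b,c,d$ is even. After relabeling I may assume $a,b,c$ are odd and $d$ is even. Reversing orientation if needed, I may also assume that the standard plumbing of Proposition \ref{definites} gives a \emph{negative} definite $X$ with $\partial X=Y$, whose intersection form $Q_X$ has the star shape with a central vertex and four legs determined by the continued fraction expansions of $a,b,c,d$.

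Since $Y$ has a unique spin structure $\mathfrak{s}$, Rochlin's theorem applied to the rational balls $U,V$ produced by the embedding forces $\overline{\mu}(Y,\mathfrak{s})=0$. I would compute $\overline{\mu}$ via the 2-handlebody $X'$ of Proposition \ref{spins} (Figure \ref{spinpic} in the 4-strand case): the characteristic sublink is determined by the parities $(a,b,c,d)\equiv(1,1,1,0)\pmod2$, and after the handleslides described there it becomes a single unknot whose framing $n$ satisfies $\sigma(X')-n=\overline{\mu}(Y,\mathfrak{s})$. The vanishing of $\overline{\mu}$ translates into a polynomial identity among $a,b,c,d$ that sharply restricts their signs and magnitudes, reducing the problem to a small list of candidate sign patterns for $(a,b,c,d)$.

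On each such sign pattern I then apply the Donaldson obstruction in its strongest form. Since $Y$ is a $\mathbb{Z}/2$--homology sphere, the improved Theorem \ref{diagd} of Greene--Jabuka applies: there must exist an integer matrix $A$ with $-AA^t=Q_X$ such that every class of $\operatorname{im}A/\operatorname{im}Q_X$ has a characteristic representative $Ax$ with $x\in\{\pm1\}^n$. Using the structure of $Q_X$ (a central $0$-framed vertex with four legs, one of which is ``even'' due to $d$), I would enumerate the possible columns of $A$ exactly as in Greene--Jabuka's 3-strand analysis \cite{GJ}: each column must pair in a prescribed way with the leg-vectors, and the $\{\pm1\}$ condition together with Corollary \ref{diag0} (the existence of \emph{two} such matrices with the appropriate cokernel decomposition when $H_1(Y)\neq0$, or equivalently the matching of the associated $\lambda$-parameters) should collapse the list to precisely the forms $(a\pm1,-a,a,-a)$ up to permutation and simultaneous sign change, recognizing $Y\cong Y(a\pm1,-a,a,-a)$.

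The main obstacle is the combinatorial enumeration in the last step: unlike the 3-strand setting, the rank of $Q_X$ is larger, the legs can themselves be chains of $-2$'s coming from the continued fractions of the odd $a_i$, and the classification of $\{\pm1\}$-characteristic factorizations $-AA^t=Q_X$ becomes genuinely case-heavy. The $\overline{\mu}$ step is crucial precisely because it knocks out the sign configurations (such as all $a,b,c,d$ positive, or three positive and one negative) in which the Donaldson/$d$-invariant bookkeeping would otherwise be unmanageable; once only a handful of sign patterns survive, the lattice arguments parallel those already carried out for $Y(a,b,c)$ in the previous two propositions.
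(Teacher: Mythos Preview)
Your strategy is essentially the paper's: use the vanishing of $\overline{\mu}$ for the unique spin structure, then the Donaldson obstruction, and finally the requirement of \emph{two} distinct subsets from Corollary \ref{diag0}. The difference is in execution. The paper does not carry out the lattice enumeration you describe; instead it invokes Lecuona \cite[Lemma V.6]{Lec}, where exactly this classification of subsets for the standard negative-definite plumbing of a $4$-strand pretzel $\mathbb{Z}/2$-homology sphere is already done. That result (together with $\overline{\mu}$) shows that if $Y$ bounds a rational ball then, up to symmetry, $Y\cong Y(-a,-b-1,a,b)$ with $a,b<-1$ (the exceptional case $Y(1,-2,b,-b)\cong Y(2,b,-b)$ having been handled among the $3$-strand cases). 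Moreover \cite[Figure V.5]{Lec} gives the subset explicitly: it is the essentially unique rectangular subset for $Y(a,-a,b,-b)$ with one extra column containing a single nonzero entry. The paper's contribution is then one line: since the subset is essentially unique, the existence of a \emph{second} subset with the cokernel splitting of Corollary \ref{diag0} forces $a=b$.

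So your ``main obstacle'' is real but already resolved in the literature; you should cite \cite{Lec} rather than redo the enumeration. Two smaller points: you do not need the $d$-invariant refinement of Theorem \ref{diagd} here---plain Corollary \ref{diag0} suffices once Lecuona's uniqueness is available---and your talk of ``matching $\lambda$-parameters'' imports the $3$-strand picture in a way that does not quite fit the $4$-strand subset structure. The clean endgame is simply: unique subset $+$ need two $\Rightarrow a=b$.
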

\begin{proof}
Suppose $Y$ embeds smoothly in $S^4$. We consider the condition imposed by Corollary \ref{diag0}.

In \cite[Lemma V.6]{Lec} the subset obtained by viewing the standard negative definite plumbing as a submanifold of a closed definite manifold is uniquely determined and, in conjunction with the $\overline{\mu}$ invariant, is used to show that if $Y$ bounds a rational ball it is either $Y(-a,-b-1,a,b)$ with $a,b <-1$ or, if $a=1$, has the form $Y(1,-2,b,-b) \cong Y(2,b,-b)$. The latter is considered above and does not embed smoothly in $S^4$.

The subset $S$ for $Y(-a,-b-1,a,b)$ is described explicitly by \cite[Figure V.5]{Lec} and is obtained by adding a new column with a single non-zero entry to the matrix for the essentially unique rectangular subset for $Y(a,-a,b,-b)$.

On inspection we see that in order to get a second subset, which differs as specified by Corollary \ref{diag0}, we must have $a=b$.
\end{proof}

\subsection{Double branched covers of 3-component links}

Finally we consider double branched covers of pretzel links with three components. By Lemma \ref{linkcomplem} the double branched covers have four spin structures and, if they embed in $S^4$, are rational homology spheres.

We first consider the following special case, where Corollary \ref{mubar} is not sufficient.
\begin{Prop}\label{shitmess}Let $a$ be odd and $b$ even. If $Y(a,b,b,b)$ embeds smoothly in $S^4$ and has $e(Y)>0$ then it is diffeomorphic to $Y(2,-2,2)$.
\end{Prop}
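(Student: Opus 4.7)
The plan combines the $\overline{\mu}$-invariant obstruction from Corollary \ref{mubar} with Donaldson's diagonalisation obstruction from Corollary \ref{diag0}. Since $P(a,b,b,b)$ with $a$ odd and $b$ even is a three-component pretzel link, $b_1(Y)=0$ by Lemma \ref{linkcomplem}, and Corollary \ref{mubar} forces at least three of the four $\overline{\mu}$-invariants on $Y$ to vanish.

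I would carry out a case analysis on the signs of $(a,b)$, discarding $(-,-)$ which violates $e(Y)>0$. In each remaining case I would write down the standard negative-definite plumbing $X$ bounding $-Y$, obtained by normalising the Seifert data to $0<b_i<a_i$; the plumbing is symmetric in the three $b$-legs, so three of the four Wu classes share the same self-intersection and the vanishing of their common $\overline{\mu}$ becomes a single linear equation in $(a,b)$. A short computation gives: in the $(+,+)$ case with $|a|\geq 3$ the equation $a+b=3$, incompatible with $a,b\geq 2$; in the $(+,-)$ case with $a\geq 3$ an equation incompatible with $e(Y)>0$; in the $(-,+)$ case with $|a|\geq 3$ the equation $b=|a|+1$. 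The $|a|=1$ cases absorb the non-singular Seifert fibre $(1,\pm 1)$ and reduce $Y$ to a three-fibre Seifert manifold; running the $\overline{\mu}$ analysis on the reduced plumbing leaves only $Y(-1,2,2,2)\cong Y(2,-2,2)$ together with the single additional case $Y(1,-4,-4,-4)$. Imposing the square condition $|H_1(Y)|=(|a|+1)^2(2|a|-1)$ from Corollary \ref{diag0} on the $(-,+)$ family further restricts $|a|$ to the sparse set $\{5,13,25,\ldots\}$ on which $2|a|-1$ is a perfect square.

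The main obstacle is ruling out the remaining candidates using the full linear-subset analysis of Corollary \ref{diag0}. For $Y(1,-4,-4,-4)$ the plumbing has $b_2(X)=4$ and $\operatorname{coker} Q_X\cong(\mathbb{Z}/4)^2$; one enumerates all $4\times 4$ integer matrices $A$ with $AA^{t}=-Q_X$ by fixing $r_1=e_1$ (permissible up to the action of $\operatorname{Aut}(Q_X)$) and writing the remaining rows as $(-1,s_i)$ with $s_i\in\{\pm 1\}^3$, $|s_i|^2=3$, and $s_i\cdot s_j=-1$ for $i\neq j$. The finitely many such configurations are permutations of one another, so the image sublattice $\operatorname{im}(A)\subset\mathbb{Z}^4$ is the same for all of them, and the induced subgroup $H=\operatorname{im}(A)/\operatorname{im}(Q_X)$ is always the unique $2$-torsion subgroup $(\mathbb{Z}/2)^2$ of $\operatorname{coker}(Q_X)$. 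Consequently no two embeddings yield $H_1\cap H_2=0$, Corollary \ref{diag0} is violated, and $Y(1,-4,-4,-4)$ does not embed. For each member $Y(-|a|,|a|+1,|a|+1,|a|+1)$ of the surviving $(-,+)$ family the analogous (longer) enumeration of linear double subsets on the plumbing with central vertex $-3$, one leaf of weight $-|a|$, and three chains of length $|a|$ must similarly show that all valid subsets produce the same image subgroup in $\operatorname{coker}(Q_X)$, contradicting Corollary \ref{diag0}. Combining these obstructions, the only embeddable case is $Y\cong Y(2,-2,2)$.
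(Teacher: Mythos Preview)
Your overall strategy---combining the $\overline{\mu}$ constraint from Corollary~\ref{mubar} with the Donaldson obstruction of Corollary~\ref{diag0}---matches the paper's, and your disposal of $Y(1,-4,-4,-4)$ is correct and essentially the paper's own argument. The genuine gap is your $(-,+)$ branch. You correctly obtain $b=|a|+1$ there and use the square-order condition to thin to $|a|\in\{5,13,25,\dots\}$, but you then only \emph{assert} that for each surviving $Y(-|a|,|a|+1,|a|+1,|a|+1)$ an enumeration of linear subsets will force a unique $\operatorname{im}A$. For an infinite family this requires a uniform argument, and you provide none; as written the proof is incomplete. (A minor secondary omission: your $|a|=1$ discussion should also dispose of $Y(1,2,2,2)$, which passes the $\overline{\mu}$ test but has $|H_1|=20$.)

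The paper closes this gap by reordering the argument. Before invoking $\overline{\mu}$ it applies a Lecuona-style column-counting argument (an extension of \cite[Lemma~V.5]{Lec}, with the $\mathbb{Z}/2$-homology-sphere hypothesis dropped) directly to the standard negative-definite plumbing: when $b>2$ is positive, the three long chains of $(-2)$-vertices demand more basis vectors than the rank allows, so no matrix $A$ with $AA^{t}=-Q_X$ exists at all. This eliminates your entire $(-,+)$ family in one stroke, leaving only $b=2$ or $b<0$. The $\overline{\mu}$ constraint then reduces $b=2$ to $a=\pm 1$, and for $b<0$ gives $a=-b-3>0$; substituting this back into $e(Y)=\frac{1}{-b-3}+\frac{3}{b}>0$ forces $-4\le b<-3$, hence $(a,b)=(1,-4)$. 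Only $Y(1,-4,-4,-4)$ remains. In fact your proposed enumeration in the $(-,+)$ case, if actually carried out, would discover precisely that there are \emph{no} subsets---but that is the content of the Lecuona step, and it must be argued rather than asserted.
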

\begin{proof}
In order to find a subset, $b$ must be negative or $2$. We can see this by a simple extension of the proof of \cite[Lemma V.5]{Lec}, where we drop the assumption that $Y$ is a $\mathbb{Z}/2$ homology sphere -- we attempt to construct a subset and compare the number of columns required to the number of vertices in the graph.
The $\overline{\mu}$ invariants for $Y(a,2,2,2)$ can easily be calculated and three are $\operatorname{sign}a -a$. The manifold $Y(-1,2,2,2) \cong Y(2,-2,2)$ embeds in $S^4$ but $Y(1,2,2,2)$ does not as it has first homology of non-square order 20.

In the case where $b<0$, the generalised Euler invariant implies that $a>0$. Calculating the $\overline{\mu}$ invariants shows that $a=-b-3$. The condition that $a>0$ means that $b<-3$.

We can now express the generalised Euler characteristic as $$\frac{1}{-b-3}+\frac{3}{b} = \frac{-2b-9}{-b^2-3b} >0.$$
Since the denominator in this fraction is $ab<0$, this shows that $b \geq -4$.

To show that $Y(1,-4,-4,-4)$ does not embed in $S^4$, we use Corollary \ref{diag0}.

For the standard definite plumbing, a simple computation shows that the matrix $A(S)$ is uniquely determined up to reordering or changing the signs of the columns.
\end{proof}

Finally, we consider the last remaining case needed to prove Theorem \ref{pretzelcover34}
\begin{Prop}
Let $Y$ be of the form $Y(a,b,c)$ or $Y(a,b,c,d)$ where $a,b,c,d \in \mathbb{Z}\backslash \{0\}$. Suppose that $Y$ has four spin structures. Then $Y$ embeds smoothly in $S^4$ if and only if it is diffeomorphic to either $Y(a,-a,a)$ or $Y(a \pm 1,-a,a,-a)$.

\end{Prop}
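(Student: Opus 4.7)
The backward direction is already established by the constructions of Section~\ref{construction}: the corollary following Proposition~\ref{lan} shows that the pretzel links $P(a,-a,a)$ and $P(a,-a,a,-a\pm 1)$ are doubly slice, and Lemma~\ref{dsemb} then produces the claimed embeddings of their double branched covers. For the forward direction the plan is to combine the Neumann-Siebenmann $\overline\mu$ obstruction from Corollary~\ref{mubar} with the Donaldson diagonalization obstruction from Corollary~\ref{diag0}, in the same spirit as the preceding propositions of this section. Assume $Y$ embeds smoothly in $S^4$ and has four spin structures. By Lemma~\ref{linkcomplem} the underlying pretzel link has three components, so $b_1(Y)=0$ and $Y$ is a rational homology sphere; Corollary~\ref{mubar} then forces at least three of the four spin structures to have vanishing $\overline\mu$.

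First I would treat the three-strand case $Y=Y(a,b,c)$, where the three-component condition forces $a,b,c$ to all be even. Reading the spin structures as Wu sets in the standard star-shaped plumbing, one writes the four $\overline\mu$ values as differences $\sigma(X)-w\cdot w$, which depend explicitly on $a,b,c$ and the generalised Euler invariant $e(Y)$. After choosing an orientation so that $e(Y)>0$ and reordering so that $|a|\geq |b|,|c|$, the condition that three of these four values vanish leaves only the triple $(a,-a,a)$ up to sign. A final application of Corollary~\ref{diag0} to the standard negative-definite plumbing pins down the Seifert invariants and yields $Y\cong Y(a,-a,a)$.

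Next I would turn to the four-strand case $Y=Y(a,b,c,d)$. Here having three components forces exactly one entry to be odd, which I label $d$. The subcase in which three of the four Seifert invariants coincide is precisely Proposition~\ref{shitmess} and contributes only $Y(2,-2,2)\cong Y(a,-a,a)$ with $a=2$. Outside this subcase, computing the four $\overline\mu$ values from the plumbing and imposing that three vanish forces the even triple $\{a,b,c\}$ to be a weak complementary configuration $\{a,-a,a\}$ (up to reordering and signs) and the odd entry $d$ to satisfy $d=\pm a\pm 1$. Applying Corollary~\ref{diag0} to the resulting candidate plumbings, the demand that $\operatorname{coker} Q_X\cong H_1\oplus H_2$ for two distinct image subgroups $H_i=\operatorname{im}A_i/\operatorname{im}Q_X$ is very rigid on plumbings of this small rank, and only the diffeomorphism types $Y(a\pm 1,-a,a,-a)$ survive.

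The main obstacle is the bookkeeping in the four-strand subcase: the $\overline\mu$ invariant is a piecewise quadratic function of the choice of Wu set, so writing the four values uniformly and imposing that three of them vanish requires a careful case split on the signs of $a,b,c,d$ and on the sign of $e(Y)$. Once that split is resolved, the remaining diagonalization step is comparatively short because the plumbing graph has small rank and the candidate matrices $A_i$ are essentially forced by its combinatorics, as in the rigidity arguments already used in Proposition~\ref{shitmess} and in the earlier propositions on knot double branched covers.
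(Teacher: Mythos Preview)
Your approach is essentially the paper's: reduce to Corollary~\ref{mubar}, compute the four $\overline\mu$ values, and require three of them to vanish, invoking Proposition~\ref{shitmess} for the degenerate four-strand subcase. Two points where the paper is sharper than your outline:

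First, the $\overline\mu$ computation is not a ``piecewise quadratic'' case split but a linear one. Using the handlebody from Figure~\ref{pretzelc} with signature $\tau$, the paper writes the four invariants explicitly: for $Y(a,b,c)$ with $a,b,c$ even they are $\tau,\ \tau-a-b,\ \tau-a-c,\ \tau-b-c$, and for $Y(a,b,c,d)$ with $a$ odd they are $\tau-a-b,\ \tau-a-c,\ \tau-a-d,\ \tau-a-b-c-d$. Setting three of these to zero is then an elementary linear system. In the three-strand case this yields either $a=b=-c$ (up to reordering) with $\tau=0$, or $a=b=c$ with $\tau=\pm2$; the latter forces $a=b=c=\pm1$, giving the lens space $L(3,\pm1)$, which you should not overlook since the hypothesis allows $\pm1$. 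In the four-strand case, if the last invariant is the nonvanishing one you are exactly in Proposition~\ref{shitmess}; otherwise two of $b,c,d$ agree and the third is their negative, and then computing $\tau$ directly gives $a=-b\pm1$.

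Second, your closing appeal to Corollary~\ref{diag0} is unnecessary. Once the $\overline\mu$ constraints and the signature computation determine the parameters, you already have exactly the diffeomorphism types $Y(a,-a,a)$ and $Y(a\pm1,-a,a,-a)$ up to orientation; there is nothing further for the diagonalisation obstruction to eliminate. The paper's proof finishes with $\overline\mu$ alone.
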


\begin{proof}
We first consider $Y=Y(a,b,c)$. This has four spin structures only when $a,b$ and $c$ are even. Let $\tau$ be the signature of the 4-manifold given by the first diagram in Figure \ref{pretzelc}. The four $\overline{\mu}$ invariants of $Y$ are $\tau, \tau-a-b, \tau-a-c$ and $\tau-b-c$. Three are zero which implies that either $\tau=0$ and, up to reordering, $a=b=-c$ or $a=b=c$. In the latter case $\tau= \pm 2$ and so $a=b=c=\pm1$. This does not embed in $S^4$ as it is either the lens space $L(3,1)$ or $L(3,2)$.

Next, consider $Y=Y(a,b,c,d)$. This has four spin structures if exactly one is odd, which can be assumed to be $a$. Define $\tau$, similar to the above, using the second picture in Figure \ref{pretzelc}.
The $\overline{\mu}$ invariants are $\tau-a-b, \tau-a-c, \tau-a-d$ and $\tau-a-b-c-d$.

We again require that three are zero. If the the last of these is not, we may apply Proposition \ref{shitmess}. Otherwise, up to relabeling, $b=c=-d$.
It follows easily, by considering the value of $\tau$ for either sign of $b$, that $a=-b\pm 1$.
\end{proof}


\bibliographystyle{amsplain}
\bibliography{ref}

\end{document}